\newtheorem{theo+}{Theorem}[section]
\newtheorem{prop+}[theo+]{Proposition}
\newtheorem{coro+}[theo+]{Corollary}
\newtheorem{lemm+} [theo+]{Lemma}
\newtheorem{deep+}  [theo+]  {Deep Result}
\newtheorem{fact+}  [theo+]  {Fact}
\theoremstyle{definition}
\newtheorem{exam+}  [theo+]  {Example}
\newtheorem{rema+}  [theo+]  {Remark}
\newtheorem{defi+}  [theo+]  {Definition}
\newtheorem*{definition*}{Definition}
\newtheorem{xca+}[theo+]{Exercise}
\newtheorem{thmalph}{Theorem}
\newtheorem{coralph}[thmalph]{Corollary}
\newenvironment{theorem}{\begin{theo+}}{\end{theo+}}
\newenvironment{proposition}{\begin{prop+}}{\end{prop+}}
\newenvironment{corollary}{\begin{coro+}}{\end{coro+}}
\newenvironment{lemma}{\begin{lemm+}}{\end{lemm+}}
\newenvironment{remark}{\begin{rema+}}{\end{rema+}}
\newenvironment{definition}{\begin{defi+}}{\end{defi+}}
\numberwithin{equation}{section}
\newcommand\beq{\begin{equation}\label}
\newcommand\eeq{\end{equation}}
\newcommand{\C}{\mathbb{C}}
\newcommand{\R}{\mathbb{R}}
\newcommand{\ad}{\mathrm{ad}}
\renewcommand\c[1]{{\check{#1}}}
\renewcommand\a[1]{{\acute{#1}}}
\def\draft{\centerline{(Draft {\the \day}/{\the\month} \the \year.)}}
\def\refn#1.#2{\expandafter\def\csname#1\endcsname{[#2]}}
\def\refnr#1.{\csname#1\endcsname}
\def\fa{\mathfrak a}
\def\fg{\mathfrak g}
\def\fk{\mathfrak k}
\def\fm{\mathfrak m}
\def\fn{\mathfrak n}
\def\fp{\mathfrak p}
\def\fsl{\mathfrak {sl}}
\def\ft{\mathfrak t}
\def\fsu{\mathfrak{su}}
\def\a{\alpha}
\def\Claminv2{|C(\Lambda)|^{-2}}
\def\de{d\varepsilon}
\def\Aa2D{A^{\a,2}(D)}
\def\bAa2D{\overline{A^{\a,2}(D)}}
\def\Ab2D{A^{\beta,2}(D)}
\def\bAb2D{\overline{A^{\beta,2}(D)}}
\def\Norm#1_#2{\Vert#1\Vert_{#2}}
\def\phipl12{\phi_{p_{l_1}, p_{l_2}}}
\def\phip01{\phi_{p_{0}, p_{0}}}
\def\a{\alpha}
\def\Claminv2{|C(\Lambda)|^{-2}}
\def\ad{\operatorname{ad}}
\def\Ind{\operatorname{Ind}}
\def\exp{\operatorname{exp}}
\def\sgn{\operatorname{sgn}}
\def\End{\operatorname{End}}
\def\de{d\varepsilon}
\def\Aa2D{A^{\a,2}(D)}
\def\bAa2D{\overline{A^{\a,2}(D)}}
\def\Ab2D{A^{\beta,2}(D)}
\def\bAb2D{\overline{A^{\beta,2}(D)}}
\def\phipl12{\phi_{p_{l_1}, p_{l_2}}}
\def\phip01{\phi_{p_{0}, p_{0}}}
\def\alg/{algebra}
\def\Alg/{Algebra}
\def\alt/{alternative} % \alt/ly
\def\anal/{analytic}
\def\analfunc/{\anal/\ \func/}
\def\Ans/{\it Answer. \normal}
\def\ass/{associative}
\def\nass/{non-\ass/}
\def\autom/{automorphism}
\def\homom/{homomorphism}
\def\isom/{isomorphism}
\def\bdd/{bounded}
\def\Bdd/{Bounded}
\def\bddsymdom/{bounded \sym/ \dom/}
\def\Cartdom/{Cartan \dom/}
\def\bdry/{boundary}
\def\bsd/{\bdd/ \symdom/}
\def\bv/{boundary value}
\def\cf/{{\it cf}\.}
\def\Cf/{{\it Cf}\.}
\def\charr/{character}
\def\coeff/{coefficient}
\def\comm/{commutative}
\def\cpct/{compact}
\def\compl/{complex}
\def\comp/{complex}
\def\Comp/{Complex}
\def\conf/{conformal}
\def\conj/{conjugate}
\def\conn/{connect}
\def\cont/{continuous}
\def\conv/{converge} % \conv/nce \conv/nt
\def\convc/{convergence}
\def\convt/{convergent}
\def\convx/{convex}
\def\coord/{coordinate}
\def\lcoord/{local coordinate}
\def\Corr/{Corresponding}
\def\corr/{corresponding}
\def\corrd/{correspond}
\def\cov/{covariant}
\def\decomp/{decomposition}
\def\deco/{decompose}
\def\diff/{different} % \diff/iable \diff/ial
\def\Diff/{Different} % \Diff/able \Diff/ial
\def\dimn/{dimension} % \dimen/al
\def\distr/{distribution} % \distr/al
\def\div/{diverge} % \div/nt
\def\dom/{domain}
\def\eg/{\hbox{\it e.g}\.}
\def\eigenf/{eigen\-\func/}
\def\eigensp/{eigen\-space}
\def\eigenv/{eigen\-value}
\def\eq/{equation}
\def\equa/{equation}
\def\de/{\diff/ial \equa/}
\def\do/{\diff/ial operator}
\def\ode/{ordinary \de/}
\def\pde/{partial \de/}
\def\pdo/{partial \diff/ial operator}
\def\psdo/{pseudo \diff/ial operator}
\def\fin/{finite}
\def\Ex/{\it Example.\ \normal}
\def\Exnr#1/{\it Example #1.\ \normal}
\def\foll/{follow}
\def\follg/{following}
\def\Follg/{Following}
\def\func/{function}
\def\Func/{Function}
\def\Fonc/{Fonc\-tion}
\def\fonc/{fonc\-tion}
\def\Funk/{Funk\-tion}
\def\funk/{Funk\-tion}
\def\gen/{general}
\def\har/{harmonic}
\def\Hint/{\it Hint. \normal}
\def\hist/{historic}
\def\histcl/{historical}
\def\hol/{holo\-morphic}
\def\homog/{ho\-mo\-ge\-ne\-ous}
\def\hyp/{hyper\-bolic}
\def\hyperg/{hyper\-geometric}
\def\ie/{\hbox{\it i.e.}}
\def\iff/{if and only if}
\def\ineq/{inequality}
\def\infra/{{\it inf\-ra}}
\def\ultra/{{\it ult\-ra}}
\def\Inpart/{In particular}
\def\inpart/{in particular}
\def\instof/{instead of}
\def\interps/{interpolation space}
\def\interp/{interpolation}
\def\Interp/{Interpolation}
\def\interpr/{Interpretation}
\def\Intr/{Introduction}
\def\intv/{interval}
\def\inv/{invariant}
\def\invc/{invariance}
\def\Iowords/{In other words}
\def\iowords/{in other words}
\def\ipr/{inner product}
\def\irred/{irreducible}
\def\lb/{line bundle}
\def\lin/{linear}
\def\lhs/{left hand side}
\def\rhs/{right hand side}
\def\loc/{local}
\def\math/{mathematic}
\def\mathcn/{\math/ian}
\def\manif/{manifold}
\def\meas/{measure}
\def\measl/{measurable}
\def\mero/{mero\-morphic}
\def\mon/{monomial}
\def\monog/{monogenic}
\def\mult/{multiple}
\def\multy/{multiply}
\def\multn/{multiplication}
\def\nas/{necessary and sufficient}
\def\nbd/{neighborhood}
\def\neg/{negative}
\def\nondeg/{nondegenerate}
\def\Oohand/{On the other hand}
\def\oohand/{on the other hand}
\def\Oonhand/{On the one hand}
\def\oonhand/{on the one hand}
\def\oper/{operator}
\def\orth/{ortho\-gonal}
\def\orthon/{ortho\-normal}
\def\otoh/{on the other hand}
\def\quat/{quaternion}
\def\pp/{\hbox{a. e.}}
\def\psh/{plurisubharmonic}
\def\pol/{polynomial}
\def\pot/{potential}
\def\pos/{positive}
\def\princ/{principle}
\def\prob/{probability}
\def\proj/{projective}
\def\projn/{projection}
\def\Proof/{\it Proof:\normal}
\def\Rem/{\it Remark\normal}
\def\Remnr#1/{\it Remark\ \normal #1. }
\def\rep/{representation}
\def\reps/{representations}
\def\meta/{metaplectic representation}
\def\repr/{reproducing}
\def\reprker/{reproducing kernel}
\def\resp/{respective} % \resp/ly
\def\resply/{respectively}
\def\restr/{restriction}
\def\sa/{self-adjoint}
\def\st/{such that}
\def\sol/{solution}
\def\ru/{space}
\def\sph/{spherical}
\def\ssp/{sub\ru/}
\def\sym/{symmetric}
\def\Sym/{Symmetric}
\def\symb/{symbol}
\def\symbc/{symbolic}
\def\symdom/{\sym/ domain}
\def\symp/{symplectic}
\def\Theor#1/{\fet Theorem #1.\ \normal}
\def\Lem#1/{\fet Lemma #1.\ \normal}
\def\Lemma/{\fet Lemma.\ \normal}
\def\topl/{topology}
\def\topll/{topological}
\def\transf/{transform}
\def\transl/{translation}
\def\transfn/{transformation}
\def\transv/{transvectant}
\def\trig/{trigonometric}
\def\tril/{trilinear}
\def\trilf/{trilinear form}
\def\uhp/{upper halfplane}
\def\uhs/{upper halfspace}
\def\vb/{vector bundle}
\def\vf/{vector field}
\def\vsp/{vector space}
\def\wrt/{with respect to}
\def\Wlog/{Without loss of generality}
\def\a{\alpha}
\def\Ab/{Abel}
\def\Ban/{Banach}
\def\Bansp/{\Ban/ space}
\def\Belt/{Bel\-tra\-mi}
\def\Berg/{Berg\-man}
\def\Bern/{Ber\-nou\-lli}
\def\Berz/{Berezin}
\def\Bess/{Bessel}
\def\Cart/{Car\-tan}
\def\Cay/{Cay\-ley}
\def\CG/{Clebsch-Gordan}
\def\Cl/{Clifford}
\def\CR/{Cauchy-Rie\-mann}
\def\Dir/{Dirichlet}
\def\Eucl/{Euclide}
\def\Eucln/{Euclidean}
\def\F/{Fourier}
\def\Hank/{Hankel}
\def\Hankf/{\Hank/ form}
\def\Herm/{Hermite}
\def\Hilb/{Hilbert}
\def\Hilbs/{Hilbert space}
\def\Hilbsp/{Hilbert space}
\def\HS/{Hilbert-Schmidt}
\def\Lag/{La\-grange}
\def\Lap/{La\-place}
\def\LapBelt/{\Lap/-\Belt/}
\def\Leb/{Lebesgue}
\def\Marc/{Mar\-cin\-kie\-wicz}
\def\Moeb/{Moebius}
\def\Moebt/{Moebius transformation}
\def\Moebtransfn/{Moebius transformation}
\def\Pla/{Plan\-che\-rel}
\def\Poin/{Poin\-car\'e}
\def\Riem/{Rie\-mann}
\def\Riemn/{\Riem/ian}
\def\psRiemn/{pseudo-\Riem/ian}
\def\Riems/{Rie\-mann surface}
\def\Schroe/{Schr\"odinger}
\def\Weier/{Weier\-strass}
\def\id{\text{id}}
\def\anal/{analytic}
\def\bsd/{bounded symmetric domain  }
\def\bdd/{bounded}
\def\calc/{calculation}\def\conj{conjugate}
\def\calci/{calculating}\def\eg{e.g.}
\def\conj/{conjugate}
\def\deco/{decomposition}
\def\eg/{e.g.}
\def\fct/{function}
\def\gp/{group}
\def\hw/{highest weight}
\def\hwv/{highest weight vector}
\def\hwvs/{highest weight vectors}
\def\lw/{lowest weight}
\def\lwv/{lowest weight vector}
\def\lwvs/{lowest weight vectors}
\def\hds/{holomorphic discrete series}
\def\iff/{if and only if}
\def\inv/{invariant}
\def\irrde/{irreducible decomposition}
\def\meas/{measure}
\def\transf/{transform}
\def\rep/{representation}
\def\resp/{respectively}
\def\inters/{intertwines}
\def\interg/{intertwining}
\def\meta/{metaplectic representation}
\def\qu/{quaternion}
\def\rep/{representation}
\def\symdom/{ symmetric domain}
\def\st/{such that}
\def\shd/{subhead}
\def\transf/{transform}
\def\wrt/{with respect to}
\def\Norm#1#2#3{\Vert#1\Vert^{#3}_{{#2}}}
\DeclareMathOperator{\SL}{SL}
\let\sl\relax
\DeclareMathOperator{\sl}{\mathfrak{sl}}
\DeclareMathOperator{\SU}{SU}
\DeclareMathOperator{\su}{\mathfrak{su}}
\newcommand{\fraka}{\mathfrak{a}}
\newcommand{\frakg}{\mathfrak{g}}
\newcommand{\frakh}{\mathfrak{h}}
\newcommand{\frakk}{\mathfrak{k}}
\newcommand{\frakl}{\mathfrak{l}}
\newcommand{\frakp}{\mathfrak{p}}
\newcommand{\frakt}{\mathfrak{t}}
\newcommand{\CC}{\mathbb{C}}
\newcommand{\RR}{\mathbb{R}}
\newcommand{\ZZ}{\mathbb{Z}}
\newcommand{\calH}{\mathcal{H}}
\newcommand{\0}{\textbf{0}}
\DeclareMathOperator{\RC}{RC}
\renewcommand{\min}{{\textup{min}}}
\begin{document}
	
\title[Degenerate principal series of $G_{2(2)}$]{On the degenerate principal series of $G_{2(2)}$ induced from a Heisenberg parabolic subgroup}

\begin{abstract}
We study degenerate principal series representations of the split real group $G_{2(2)}$ induced from a character of a maximal parabolic subgroup whose unipotent radical is a Heisenberg group. Using the Lie algebra action on the space of $K$-finite vectors, we find the points of reducibility and the complementary series. The minimal representation and a limit of discrete series are identified as kernel of the corresponding Knapp--Stein intertwining operator. Moreover, we show that some quaternionic discrete series representations occur as the subrepresentation on which the family of intertwining operators vanishes of order two.
\end{abstract}

\keywords{Degenerate principal series, Heisenberg parabolic subgroup, complementary series, minimal representation, quaternionic discrete series}

\subjclass[2020]{17B15, 17B60, 22D30, 43A80, 43A85}

\author{Jan Frahm}
\address{Department of Mathematics, Aarhus University, Ny Munkegade 118, 8000 Aarhus C, Denmark}
\email{frahm@math.au.dk}

\author{Robin van Haastrecht}
\author{Clemens Weiske}
\author{Genkai Zhang}
\address{Mathematical Sciences, Chalmers University of Technology and Mathematical Sciences, G\"oteborg University, SE-412 96 G\"oteborg, Sweden}
\email{robinva@chalmers.se}
\email{clemens.weiske@gmail.com}
\email{genkai@chalmers.se}

\thanks{The first named author was supported by a research grant from
  the Villum Foundation (Grant No. 00025373) and a research grant from the Aarhus University Research Foundation (grant no. AUFF-E-2022-9-34). The third named author
  was supported by a research grant from the Knut and Alice Wallenberg
  foundation (KAW 2020.0275). The fourth named author was supported
  partially by the Swedish Research Council (VR, Grants 2018-03402,  2022-02861).}

\maketitle

\section*{Introduction}

Heisenberg parabolic subgroups $P$ in semisimple Lie groups $G$ and the corresponding homogeneous spaces $G/P$ are of interest both in contact geometry~\cite{HM98,Mok08}, analysis~\cite{Kab12a} and representation theory~\cite{BKZ08,Fra22,Wei03,Zha22,Zha23}. One of their appearances in representation theory is through parabolically induced representations. The family of representations of $G$ induced from characters of $P$ is often referred to as a \emph{degenerate principal series}. This paper is concerned with the detailed study of degenerate principal series representations for the  real Lie group $G_{2(2)}$, the split real form of the  complex group of type $G_2$.

In a series of papers~\cite{Zha22, Zha23}, the last named author studied degenerate principal series representations induced from Heisenberg parabolic subgroups for Hermitian, quaternionic and split exceptional Lie groups of type $F_4$, $E_6$, $E_7$ and $E_8$ by computing explicitly the action of the Lie algebra $\mathfrak g$ on the underlying $(\mathfrak g_{\mathbb C}, K)$-module, where $K$ is a maximal compact subgroup of $G$. The methods used in these works rely on a transitivity property of the Weyl group of $K$ on representations in certain tensor product decompositions. Unfortunately, this property does not hold for $G=G_{2(2)}$.

In this paper, we treat the missing case of $G_{2(2)}$ by computing explicitly the relevant tensor product decompositions using Rankin--Cohen operators. This allows us to answer the following questions about the corresponding degenerate principal series representations of $G_{2(2)}$:
\begin{itemize}
    \item For which parameters are the representations reducible?
    \item For which parameters are the representations unitarizable?
    \item Which representations occur as subrepresentations/quotients at points of reducibility?
    \item How do the Knapp--Stein intertwining operators act on each $K$-isotypic component?
\end{itemize}
In particular, we find the corresponding complementary series, the minimal representation of $G_{2(2)}$, a limit of discrete series and some quaternionic discrete series representations as subrepresentations. We expect these results to have applications in the study of exceptional theta correspondences, where $G_{2(2)}$ occurs as one member of a dual pair in an exceptional group of type $F_4$, $E_6$, $E_7$ or $E_8$.

We now describe our results in more detail.

\subsection*{Statement of the results}

The split simple real Lie group $G=G_{2(2)}$ has a unique (up to conjugation) parabolic subgroup $P$ whose unipotent radical $N$ is a $5$-dimensional Heisenberg group. Let $P=MAN$ be a Langlands decomposition, then $M$ has two connected components and we denote by $\sgn:M\to M/M_0\simeq\{\pm1\}$ the non-trivial character of $M$. The characters $\chi_s$ of $A$ are parameterized by $s\in\CC$, and we form the degenerate principal series representations
$$ \pi_{\varepsilon,s} = \Ind_P^G(\sgn^\varepsilon\otimes\chi_s\otimes1) \qquad (\varepsilon\in\ZZ/2\ZZ=\{0, 1\}, s\in\CC). $$
The parameterization is chosen such that $\pi_{\varepsilon,s}$ is unitary for $s\in\frac{3}{2}+i\RR$ and that $\pi_{0,s}$ contains the trivial representation as a subrepresentation resp. quotient for $s=0$ resp. $s=3$.

\begin{thmalph}[see Propositions~\ref{prop:irredtriv} and \ref{prop:sgnirred}]\label{thmintroA}
    The representation $\pi_{\varepsilon,s}$ is reducible if and only if
    \begin{itemize}
        \item $\varepsilon=0$ and $s\in\ZZ\cup(2+\frac{2}{3}\ZZ_{\geq0})\cup(1-\frac{2}{3}\ZZ_{\geq0})$, or
        \item $\varepsilon=1$ and $s\in\ZZ\cup(\frac{7}{3}+\frac{2}{3}\ZZ_{\geq0})\cup(\frac{2}{3}-\frac{2}{3}\ZZ_{\geq0})$.
    \end{itemize}
\end{thmalph}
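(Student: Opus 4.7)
The plan is to determine reducibility via the standard strategy for principal series: describe the $(\mathfrak{g}_\CC,K)$-module structure of $\pi_{\varepsilon,s}$, identify the $K$-types, and then measure on which $K$-isotypic components the $\mathfrak{p}_\CC$-action degenerates. First I would fix the compact picture, in which $\pi_{\varepsilon,s}|_K \simeq \Ind_{M\cap K}^K(\sgn^\varepsilon)$ is independent of $s$. Since $K$ for $G_{2(2)}$ is (up to a $\ZZ_2$-quotient) $\SU(2)\times \SU(2)$ and $M\cap K$ is known explicitly, Frobenius reciprocity produces an explicit list of $K$-types $V_\tau$, each with multiplicity at most one, parameterized by two integers $(m,n)$ with a parity condition depending on $\varepsilon$.

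Next I would analyze the $\mathfrak{p}_\CC$-action. Decomposing $\mathfrak{p}_\CC = \mathfrak{p}^+\oplus\mathfrak{p}^-$ as a $K$-module (the two copies of an irreducible $K$-rep of known highest weight), one has
\[
\mathfrak{p}^\pm \otimes V_\tau \;\simeq\; \bigoplus_{\tau'} V_{\tau'},
\]
and the action of $\mathfrak{p}^\pm$ sends the $\tau$-isotypic component of $\pi_{\varepsilon,s}$ into the sum of those $\tau'$-components which actually occur in $\pi_{\varepsilon,s}$. Schur's lemma then reduces each such ``shift'' to multiplication by a scalar $c_{\tau,\tau'}(s)$ depending holomorphically on $s$. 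The representation is reducible at $s$ precisely when the directed graph on $K$-types defined by the non-vanishing shifts becomes disconnected, which happens iff some $c_{\tau,\tau'}(s)=0$.

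The heart of the proof is the computation of these scalars. This is where the Rankin--Cohen machinery enters: realizing $\pi_{\varepsilon,s}$ in a noncompact model on a half-space (or Heisenberg) picture, the $\mathfrak{p}^\pm$-action is by explicit first-order differential operators, and the projection onto a $K$-type is given by a Rankin--Cohen-type bilinear operator whose coefficients are known hypergeometric-looking polynomials in $s$, $m$, $n$. One carries out the following steps: (i) write down the noncompact model and the $\mathfrak{p}^\pm$-action; (ii) for each pair $(\tau,\tau')$ occurring in $\mathfrak{p}^\pm\otimes V_\tau$, express $c_{\tau,\tau'}(s)$ as a ratio of Rankin--Cohen coefficients; (iii) simplify to identify the zero locus. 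The main obstacle, exactly as the introduction flags, is that the Weyl group of $K$ no longer acts transitively on the relevant components of $\mathfrak{p}^\pm\otimes V_\tau$, so the scalars $c_{\tau,\tau'}(s)$ have to be computed in each branch separately rather than deduced by symmetry from a single one; this is what forces the use of explicit Rankin--Cohen operators instead of the shortcut used in \cite{Zha22,Zha23}.

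Finally I would combine the zero loci. Collecting the factors of the $c_{\tau,\tau'}(s)$ for all shifts, one obtains factors of the form $(s-k)$ for $k\in\ZZ$ (coming from the ``integer ladder'' of shifts by $\pm 1$ along one $K$-direction) and of the form $(s - 2 - \tfrac23 k)$ resp.\ $(s - 1 + \tfrac23 k)$ for $k\in\ZZ_{\geq 0}$ (coming from the shifts along the other direction, which are scaled by $\tfrac13$ because of the ratio of root lengths in $G_2$). The parity condition on $(m,n)$ imposed by $\varepsilon$ shifts these three arithmetic progressions by $\tfrac13$ when $\varepsilon=1$, which is precisely the discrepancy between the two cases in the theorem. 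Conversely, for $s$ \emph{not} in the stated set, all $c_{\tau,\tau'}(s)\ne 0$ and the $K$-type graph is connected, hence $\pi_{\varepsilon,s}$ is irreducible; this direction is a direct corollary of the same computation. The precise translation from zero loci of Rankin--Cohen coefficients to the two unions displayed in Theorem~\ref{thmintroA} will be done by tracking which $K$-types are connected by nonvanishing shifts at each candidate $s$, yielding Propositions~\ref{prop:irredtriv} and~\ref{prop:sgnirred}.
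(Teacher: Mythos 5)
Your outline founders on two structural facts about $G_{2(2)}$ that it gets wrong, and both are precisely the points the paper has to work hardest on. First, the $K$-types of $\pi_{\varepsilon,s}$ are \emph{not} multiplicity free: the multiplicity of $\Gamma_n\boxtimes\Gamma_m$ in $I(s)$ is $a(n,m)+1=\dim(\Gamma_n\boxtimes\Gamma_m)^{L_0}$, which is unbounded (e.g.\ $a(3m,m)=m$), and even after splitting off the sign character each $I(\varepsilon,s)$ retains $K$-types of arbitrarily large multiplicity. Consequently your appeal to Schur's lemma to reduce each ``shift'' $c_{\tau,\tau'}(s)$ to a scalar fails: the transition between $K$-isotypic components is a linear map between multiplicity spaces (the matrices $T^{n',m'}_{n,m}(s)$ of the paper), and reducibility cannot be read off from vanishing of scalars alone. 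The paper's entire apparatus in Section~\ref{sec:MultSpaceBasis} --- the bases $v_{n,m}(s,k)$, $v'_{n,m}(s,k)$ on which these maps become triangular --- exists to cope with this, and the irreducibility direction requires showing that the \emph{full} multiplicity space of every $K$-type is reached from a multiplicity-one $K$-type (the case analysis (a)--(d) in the proof of Proposition~\ref{prop:irredtriv}), not merely that the graph of $K$-types is connected. Second, $G_{2(2)}$ is split, not Hermitian: $\mathfrak{p}_\CC\cong\Gamma_3\boxtimes\Gamma_1$ is a single irreducible $K$-module, so there is no decomposition $\mathfrak{p}_\CC=\mathfrak{p}^+\oplus\mathfrak{p}^-$ into two irreducible summands on which to base the analysis.

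There is also a gap in your reducibility criterion even in the multiplicity-one situation: non-vanishing of the shift from $\tau$ to $\tau'$ does not give you the return trip from $\tau'$ to $\tau$, so connectivity of the directed graph of non-vanishing shifts does not by itself yield irreducibility. The paper circumvents this with the duality argument of Lemma~\ref{genKtypes}: $I(\varepsilon,s)$ is irreducible iff a multiplicity-one $K$-type generates \emph{both} $I(\varepsilon,s)$ and its contragredient $I(\varepsilon,3-s)$. For the reducibility direction the paper does not argue by disconnection at all, but exhibits a multiplicity-one $K$-type on which an eigenvalue of the (suitably normalized) Knapp--Stein operator vanishes, so that its kernel is a proper non-zero subrepresentation. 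Your heuristic for where the factors $(s-k)$, $(s-2-\tfrac23 k)$, $(s-1+\tfrac23 k)$ come from is roughly consistent with the recurrences in Lemma~\ref{KS-one-mult-K-types}, but as written the proposal does not contain a workable route to either implication of the theorem.
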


By the general theory of Knapp and Stein, there exists a meromorphic family $A(\varepsilon,s)$ of intertwining operators $\pi_{\varepsilon,s}\to\pi_{\varepsilon,3-s}$. Together with the irreducibility of $\pi_{\varepsilon,s}$ for $s\in(1,2)$ this immediately yields the complementary series:

\begin{coralph}[see Corollary~\ref{cor:ComplementarySeries}]\label{corintroB}
    For $\varepsilon\in\ZZ/2\ZZ$ and $s\in\RR$ the representation $\pi_{\varepsilon,s}$ is irreducible and unitarizable if and only if $s\in(1,2)$.
\end{coralph}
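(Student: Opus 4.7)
The plan is to study unitarizability of $\pi_{\varepsilon,s}$ for real $s$ through the $G$-invariant Hermitian form
\begin{equation*}
H_s(f,g) = \langle A(\varepsilon,s) f,\,g\rangle_{L^2(K/M)}
\end{equation*}
built from the Knapp--Stein intertwining operator $A(\varepsilon,s)\colon\pi_{\varepsilon,s}\to\pi_{\varepsilon,3-s}$. On each $K$-isotypic component the operator $A(\varepsilon,s)$ acts by a scalar $\lambda_\sigma(s)$, and, after a standard normalization removing common poles and zeros, unitarizability of $\pi_{\varepsilon,s}$ is equivalent to the $\lambda_\sigma(s)$ all having the same sign.

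The inclusion $(1,2)\subseteq\{\,s\in\RR:\pi_{\varepsilon,s}\text{ is irreducible and unitarizable}\,\}$ is the soft part. At $s=\tfrac{3}{2}$ the representation $\pi_{\varepsilon,3/2}$ sits on the unitary axis $\tfrac{3}{2}+i\RR$, so the normalized $A(\varepsilon,\tfrac{3}{2})$ is a positive multiple of the identity on every $K$-type, i.e.\ $H_{3/2}$ is positive definite. By Theorem~A the representation $\pi_{\varepsilon,s}$ is irreducible for every $s\in(1,2)$, so the normalized scalars $\lambda_\sigma(s)$ are finite, nonzero, and depend continuously on $s$. The signature of $H_s$ is therefore constant on $(1,2)$, and positive definiteness at $s=\tfrac{3}{2}$ propagates to the whole interval.

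For the converse one must rule out irreducible unitarizable $s\in\RR\setminus(1,2)$. In both parity cases the reducibility points closest to $\tfrac{3}{2}$ are $s=1$ and $s=2$, and I would select two $K$-types $\sigma,\sigma'$ such that the explicit Gamma-factor expression for the ratio $\lambda_\sigma(s)/\lambda_{\sigma'}(s)$, read off from the Lie algebra action on $K$-finite vectors and from the tensor product decompositions computed via Rankin--Cohen operators, has a zero of odd order at $s=2$ and, by the functional symmetry $s\leftrightarrow 3-s$, at $s=1$. The ratio therefore changes sign across these reducibility points, so $H_s$ becomes indefinite on the two irreducible intervals adjacent to $(1,2)$. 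Iterating at each further real reducibility point listed in Theorem~A then rules out positive semidefiniteness on every connected component of the real irreducibility locus disjoint from $(1,2)$.

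The main obstacle is the third paragraph: identifying appropriate $K$-types and controlling the signs of the Knapp--Stein eigenvalues $\lambda_\sigma(s)$. This rests on the explicit description of the $(\mathfrak{g}_\CC,K)$-module structure developed earlier in the paper, which is precisely why the transitivity-free approach via Rankin--Cohen operators is needed in the $G_{2(2)}$ setting. Once those eigenvalues are in hand, the remaining sign comparison is a routine one-variable real-analytic argument, and combined with the continuity argument of the second paragraph it yields the interval $(1,2)$ exactly as the complementary series.
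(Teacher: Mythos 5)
Your second paragraph is exactly the paper's argument for the implication $s\in(1,2)\Rightarrow$ irreducible and unitarizable: Corollary~\ref{cor:ComplementarySeries} combines the irreducibility statements of Propositions~\ref{prop:irredtriv} and \ref{prop:sgnirred} with the existence of the intertwiner $A(\varepsilon,s):I(\varepsilon,s)\to I(\varepsilon,3-s)$ and the standard deformation-of-signature argument (Knapp, Proposition 16.4), starting from the unitary point $s=\tfrac32$. So that half is fine and matches the paper.

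The converse is where the proposal has genuine gaps. First, your premise that ``on each $K$-isotypic component the operator $A(\varepsilon,s)$ acts by a scalar $\lambda_\sigma(s)$'' is false in this setting and contradicts the central difficulty of the paper: the $K$-types $\sigma(n,m)$ occur with multiplicity $a(n,m)+1$, the restriction of $A(\varepsilon,s)$ to an isotypic component is $\id_\sigma\otimes A_\sigma(s)$ with $A_\sigma(s)$ a matrix on the multiplicity space, and the quantities $\mu_j(n,m,s)$ of Theorem~\ref{thm:ClosedFormulasIntertwinerEigenvalues} are eigenvalues of $A_{n,m}(s)$ taken with respect to two \emph{different} bases $\mathcal B_s^{n,m}$ and $\mathcal B_{\tilde s}^{n,m}$ (the paper explicitly warns that these need not be eigenvalues of the operator on $L^2(K/L_0)$), so they do not directly record the signature of the invariant Hermitian form on that multiplicity space. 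Second, the sign analysis is asserted but never carried out, and it is not the routine computation you suggest: for instance, in the spherical case the explicit ratios of Corollary~\ref{cor:ClosedFormulasIntertwinerMultOneKtypes}, such as $A_{4r,0}/A_{0,0}=(\tfrac{\tilde s}{2})_r(\tfrac{3\tilde s-3}{2})_r/\big((\tfrac{s}{2})_r(\tfrac{3s-3}{2})_r\big)$, become \emph{positive} again once $s$ is large (each Pochhammer symbol contributes an even number of negative factors), so indefiniteness for large real $s$ cannot be read off from these multiplicity-one $K$-types alone and one is forced back into the higher-multiplicity components, where point one bites. To make the ``only if'' direction rigorous you would need either to compute the actual signature of the Hermitian matrices $A_{n,m}(s)$ in a fixed basis on enough $K$-types, or to invoke an external input such as Vogan's determination of the unitary dual of $G_{2(2)}$ via the embedding into the minimal principal series described in Section~\ref{coml-series-sec}; the body of the paper itself only records the forward direction and leans on the latter identification for the converse.
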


By embedding $\pi_{\varepsilon,s}$ into a principal series representation induced from a minimal parabolic subgroup of $G$, these complementary series representations can be identified with irreducible subrepresentations/quotients of the principal series at the edges of its complementary series as obtained by Vogan~\cite{Vog94}.
See Section
\ref{coml-series-sec} below.

It was observed by Kable~\cite[Theorem 4.8]{Kab12} that the minimal representation of $G$ is a unitarizable subrepresentation of $\pi_{1,s}$ for $s=\frac{2}{3}$, and it is given by the joint kernel of a system of conformally invariant differential operators. This representation is also referred to as \emph{ladder representation} since its $K$-types are contained in a single line. Kable also showed in \cite[Theorem 4.10]{Kab12} that the kernel of another system of conformally invariant differential operators in $\pi_{0,s}$ for $s=\frac{1}{3}$ is a non-unitary subrepresentation $\pi_{\mathrm{small}}$ whose $K$-types are contained in two parallel lines, which is why we call it \emph{double ladder representation}. We show that both subrepresentations are equal to the kernel of the corresponding intertwining operator $A(\varepsilon,s)$, when suitably normalized.

\begin{thmalph}[see Propositions~\ref{prop:ladderker} and \ref{prop:doubleladderker}]\label{thmintroC}
    \begin{itemize}
        \item The ladder representation $\pi_{\mathrm{min}}$ of $G$ is the kernel of the intertwining operator $A(1,s)$ at $s=\frac{2}{3}$.
        \item The double ladder representation $\pi_{\mathrm{small}}$ of $G$ is the kernel of the intertwining operator $A(0,s)$ at $s=\frac{1}{3}$.
    \end{itemize}
\end{thmalph}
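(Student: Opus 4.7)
The plan is to identify each kernel by comparing its $K$-spectrum with the known $K$-spectrum of Kable's subrepresentation, and then use irreducibility to conclude equality. Since $\pi_{\varepsilon,s}$ and $\pi_{\varepsilon,3-s}$ have the same restriction to $K$, the Knapp--Stein operator $A(\varepsilon,s)$, suitably normalized, acts on each $K$-isotypic component by a scalar $c_{\mu}(\varepsilon,s)$. The task therefore reduces to computing these scalars and reading off their zero set at the two reducibility points $s=\tfrac{2}{3}$ and $s=\tfrac{1}{3}$.

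First I would compute $c_{\mu}(\varepsilon,s)$ inductively across the $K$-spectrum. Fix a convenient base $K$-type (e.g. the minimal $K$-type of $\pi_{\varepsilon,s}$) and normalize so that $c_{\mu_0}(\varepsilon,s)\equiv 1$. For any $K$-type $\mu$ connected to $\mu_0$ by a chain of non-zero $\mathfrak{p}$-intertwiners, the intertwining relation $A(\varepsilon,s)\circ\pi_{\varepsilon,s}(X) = \pi_{\varepsilon,3-s}(X)\circ A(\varepsilon,s)$ for $X\in\mathfrak{p}$ yields a functional equation expressing $c_{\mu}$ in terms of $c_{\mu'}$ for neighbouring $K$-types. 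The explicit matrix entries of $\pi_{\varepsilon,s}(X)$ on $K$-finite vectors, computed earlier in the paper via Rankin--Cohen operators, make these functional equations concrete and show that $c_{\mu}(\varepsilon,s)$ is a product of linear factors in $s$ (essentially a ratio of Gamma values).

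Second, I would specialize. At $\varepsilon=1$, $s=\tfrac{2}{3}$, the zero locus $\{\mu:c_{\mu}(1,\tfrac{2}{3})=0\}$ cuts out precisely a single line of $K$-types, matching Kable's description of $\pi_{\min}$ as a ladder representation (\cite[Theorem 4.8]{Kab12}); analogously, at $\varepsilon=0$, $s=\tfrac{1}{3}$, the zero locus consists of two parallel lines, matching the double ladder $\pi_{\mathrm{small}}$ (\cite[Theorem 4.10]{Kab12}). Because Kable's subrepresentations are defined as the joint kernel of conformally invariant differential operators, both are genuine $\mathfrak{g}$-submodules of $\pi_{\varepsilon,s}$, and the kernel of $A(\varepsilon,s)$ is automatically a $\mathfrak{g}$-submodule; combined with the fact that both subrepresentations are irreducible and have the matching $K$-type support computed above, we conclude $\pi_{\min}=\ker A(1,\tfrac{2}{3})$ and $\pi_{\mathrm{small}}=\ker A(0,\tfrac{1}{3})$.

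The main obstacle is the bookkeeping in step one: assembling the Rankin--Cohen formulas into a coherent recursion for $c_{\mu}(\varepsilon,s)$ valid across the entire $K$-spectrum (not just along a ladder), and then showing that the zero set at the two reducibility points is exactly a single, respectively a double, line of $K$-types rather than a larger subset. A potential subtlety is ensuring that the normalization of $A(\varepsilon,s)$ is finite and non-zero at $s=\tfrac{2}{3},\tfrac{1}{3}$, which may require dividing the standard integral definition by an appropriate Gamma factor before taking the kernel.
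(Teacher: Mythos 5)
Your proposal founders on a point that is in fact the central difficulty of this paper: the restriction of $I(\varepsilon,s)$ to $K$ is \emph{not} multiplicity-free. The $K$-type $\sigma(n,m)=\Gamma_n\boxtimes\Gamma_m$ occurs with multiplicity roughly $\tfrac{1}{2}(a(n,m)+1)$, which is unbounded; in particular the ladder $K$-types $\sigma(3m+2,m)$ themselves have multiplicity growing with $m$. Consequently Schur's lemma only tells you that $A(\varepsilon,s)$ acts on the isotypic component $\overline{E_\sigma}\otimes E_\sigma^{L_0}$ as $\mathrm{id}_\sigma\otimes A_\sigma(s)$ with $A_\sigma(s)$ an \emph{endomorphism of the multiplicity space}, not a scalar $c_\mu(\varepsilon,s)$. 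Your step one, which reduces everything to a recursion for scalars via the $\mathfrak p$-intertwining relation, therefore does not get off the ground: the intertwining relation becomes a matrix identity between multiplicity spaces, and extracting usable information from it requires choosing bases of the multiplicity spaces adapted to the $\mathfrak p$-action. This is exactly what the paper does (the vectors $v_{n,m}(s,k)$, $v'_{n,m}(s,k)$ of Section~\ref{sec:MultSpaceBasis}), with the payoff that $A_{n,m}(s)$ becomes upper triangular and its diagonal entries satisfy explicit recursions (Proposition~\ref{evprop}, Theorem~\ref{thm:ClosedFormulasIntertwinerEigenvalues}). Determining $\ker A(\varepsilon,s)$ on a given isotypic component then still requires knowing that exactly one diagonal entry vanishes and that it sits in the correct (first) position of the triangular matrix, so that the kernel is spanned by the first basis vector; "the zero locus of $c_\mu$" is not a meaningful object here.

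The second gap is in your concluding step: even granting the $K$-spectrum computation, two $\mathfrak g$-submodules of $I(\varepsilon,s)$ with the same $K$-type \emph{support} need not coincide when those $K$-types occur with multiplicity greater than one in the ambient module, which they do here. One must match the actual embedding vectors inside each multiplicity space. The paper handles this by observing that Kable's explicit lowest vectors $u_m(r,\lambda)$ coincide with the first basis vectors $v_{3m+2,m}(s,0)$ (resp.\ $v_{3m,m}(s,0)$, $v_{3m+4,m}(s,0)$), which are precisely the spans of the kernels of the upper triangular matrices $A_{n,m}(s)$ after the regularization $\tilde A(s)=(s-\tfrac23)A(1,s)$ (resp.\ $(s-\tfrac13)A(0,s)$). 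Your remark about normalization is on the right track but understates the issue: after normalizing $A_{2,0}(s)\equiv 1$, \emph{all other} eigenvalues acquire simple poles at $s=\tfrac23$, so the regularization by $(s-\tfrac23)$ is what inverts the picture and makes exactly the ladder eigenvalues vanish.
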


Finally, we identify some quaternionic discrete series representations of $G$ as subrepresentations of $\pi_{\varepsilon,s}$. The fact that these are subrepresentations was observed earlier by Yoshinaga~\cite{Yoshi98}. We relate them to the kernel of the intertwining operator $A(\varepsilon,s)$.

Recall from \cite{GW96} a family of irreducible unitary representations $\pi_k^{\textup{GW}}$, $k\geq2$. For $k\geq5$ they belong to the discrete series and for $k=4$ the representation is a limit of discrete series.

\begin{thmalph}[see Theorems~\ref{thm:LDS} and \ref{thm:QDS}]\label{thmintroD}
    \begin{itemize}
        \item The limit of discrete series $\pi_4^{\textup{GW}}$ is isomorphic to the kernel of the intertwining operator $A(\varepsilon,s)$ for $s=2$, $\varepsilon\equiv1\pmod2$.
        \item The quaternionic discrete series representation $\pi^{\textup{GW}}_k$ ($k\geq6$ even) is isomorphic to the subrepresentation of $\pi_{\varepsilon,s}$ for $s=\frac{k}{2}$, $\varepsilon\equiv\frac{k}{2}+1\pmod2$, consisting of those vectors on which the intertwining operator $A(\varepsilon,s')$ vanishes of order two at $s'=s$.
    \end{itemize}
\end{thmalph}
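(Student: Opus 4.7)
The plan is to identify both representations via their $K$-type structures and the explicit action of the Knapp--Stein operator on each $K$-isotypic component, which have been computed in the preceding sections of the paper. Throughout, let $A(\varepsilon,s):\pi_{\varepsilon,s}\to\pi_{\varepsilon,3-s}$ denote the meromorphically normalized intertwining operator, and recall that on any $K$-type of $\pi_{\varepsilon,s}$ it acts by a scalar given as a rational product of Gamma-like factors in $s$. From Gross--Wallach~\cite{GW96} the $K$-type spectrum of $\pi_k^{\textup{GW}}$ is explicit: it consists of a ``quaternionic wedge'' of $K$-types emanating from a minimal $K$-type indexed by the integer $k$, with multiplicity one, and containing no $K$-type outside that wedge.

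For part (i), specialize at $s=2$ and $\varepsilon\equiv 1\pmod 2$. Theorem~\ref{thmintroA} places this at a point of reducibility, and by the functional equation $A(\varepsilon,s)\circ A(\varepsilon,3-s)=\mathrm{const}$ the kernel of $A(1,2)$ is the maximal proper subrepresentation of $\pi_{1,2}$. I would read off the $K$-types on which $A(1,s)$ vanishes at $s=2$ directly from the explicit scalar formula on each $K$-type and check that they form exactly the wedge of $K$-types of $\pi_4^{\textup{GW}}$, with matching multiplicities. Since Yoshinaga~\cite{Yoshi98} has already proved that $\pi_4^{\textup{GW}}$ embeds as a subrepresentation of $\pi_{1,2}$, and that embedding must factor through the kernel of $A(1,2)$, the $K$-type match forces equality.

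For part (ii), fix $k\geq 6$ even and set $s=k/2$ with $\varepsilon\equiv k/2+1\pmod 2$. Again by Yoshinaga, $\pi_k^{\textup{GW}}$ sits inside $\pi_{\varepsilon,s}$, so it lies in the kernel of $A(\varepsilon,s)$; the question is to cut out precisely $\pi_k^{\textup{GW}}$ inside this kernel. The idea is to define
\[ V_{\geq 2}(\varepsilon,s):=\{v\in\pi_{\varepsilon,s}:\text{ord}_{s'=s}A(\varepsilon,s')v\geq 2\}, \]
a $(\mathfrak g_\CC,K)$-submodule because $A(\varepsilon,s')$ is $G$-intertwining and holomorphic in $s'$, and to compute, $K$-type by $K$-type, the order of vanishing of the scalar by which $A(\varepsilon,s')$ acts. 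Those $K$-types on which this order is $\geq 2$ should turn out to be exactly the wedge of $K$-types of $\pi_k^{\textup{GW}}$, with all others contributing either the complementary ``dual wedge'' or the outer $K$-types of the full kernel. Comparing the resulting $K$-type support with that of $\pi_k^{\textup{GW}}$ from~\cite{GW96} and using Yoshinaga's embedding to conclude equality of submodules completes the identification.

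The main technical obstacle is the order-of-vanishing computation: one needs a clean enough formula for the eigenvalue of $A(\varepsilon,s')$ on every $K$-type to read off a \emph{double} zero at $s'=k/2$ precisely on the Gross--Wallach wedge and only a simple zero (or none) elsewhere. The required Gamma-factor formulas are produced from the Rankin--Cohen tensor product decompositions developed earlier in the paper, so the strategy is to feed those formulas into the product expression for the intertwining eigenvalue and track the vanishing orders via the poles and zeros of the resulting Gamma quotients. Once that bookkeeping is done, irreducibility of $\pi_k^{\textup{GW}}$ together with the $K$-multiplicity-one property of the wedge makes the final identification automatic.
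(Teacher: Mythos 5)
Your plan rests on the assumption that $A(\varepsilon,s)$ ``acts by a scalar'' on each $K$-type, so that the kernel and the order-two vanishing locus can be read off $K$-type by $K$-type from an eigenvalue formula. That assumption fails here: the restriction of $I(\varepsilon,s)$ to $K$ is \emph{not} multiplicity-free, and on a $K$-isotypic component $\overline{E_\sigma}\otimes E_\sigma^{L_0}$ the operator acts as $\mathrm{id}\otimes A_\sigma(s)$ with $A_\sigma(s)$ a genuine matrix on the multiplicity space. The paper shows (Proposition~\ref{evprop}) that this matrix is merely upper triangular in the basis $\mathcal{B}_s^{n,m}$, and the remark after Theorem~\ref{thm:ClosedFormulasIntertwinerEigenvalues} exhibits a nonzero off-diagonal entry $\Xi$ already for $(n,m)=(6,2)$. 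Consequently the dimension of $\ker A_{n,m}(s)$ is not the number of vanishing diagonal entries $\mu_j(n,m,s)$ (a Jordan-type block can shrink the kernel), and ``order of vanishing of the scalar'' is not even well defined on a multiplicity space. So your proposed $K$-multiplicity comparison with the Gross--Wallach wedge cannot be carried out from the eigenvalue formulas alone, and the step ``the $K$-type match forces equality'' has no input. A second, smaller issue: the functional equation $A(\varepsilon,s)\circ A(\varepsilon,3-s)=\mathrm{const}$ does not show that $\ker A(1,2)$ is the maximal proper submodule, since at reducibility points that constant vanishes.

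What the paper actually does is quite different and is forced by the multiplicity problem. After establishing the embedding $\pi_k^{\textup{GW}}\hookrightarrow I(\varepsilon,s)$ (for $k=4$ this is \emph{not} in Yoshinaga, who only treats the discrete series $k\geq 6$; the paper proves it via a Zuckerman translation functor argument in Proposition~\ref{prop:QDS}), it exploits irreducibility of $\pi_k^{\textup{GW}}$ and the fact that the lowest $K$-type $(0,k-2)$ has multiplicity one: it then suffices to show that $\ker A(1,2)$ (resp.\ the Jantzen level $I(\varepsilon,s)_2$) is \emph{generated} by that $K$-type. This is done by expressing an arbitrary vector $v$ of a higher-multiplicity $K$-type as $\sum_i f_i(z)T_i(z)v_i$ with $v_i$ in multiplicity-one $K$-types and $T_i$ concatenations of the transition maps of Proposition~\ref{prop:v_action}, and then running a delicate bookkeeping (Lemmas~\ref{CountZero}, \ref{CountZero2}, Corollary~\ref{zerocoro}) on the zeros of $T_i(\tilde z)v_i$ and the possible poles of the $f_i$ at $z=s$, to conclude that $v$ is reached by the Lie algebra action from the generating $K$-type. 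None of this machinery appears in your proposal, and without it the identification does not go through.
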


\subsection*{Method of proof}

The restriction of the degenerate principal series representations $\pi_{\varepsilon,s}$ to $K$ is not multiplicity-free. This makes it difficult to study the structure of the underlying $(\mathfrak{g}_\CC,K)$-module, because the transition between two $K$-types by the Lie algebra action is expressed in terms of a linear map between the multiplicity spaces. Building on the work of Kable~\cite{Kab12}, we determine these maps explicitly (see Proposition~\ref{Mmatrix}) and identify a convenient basis of every multiplicity space that is adapted to the transition between $K$-types by the Lie algebra action (see Section~\ref{sec:MultSpaceBasis}). It turns out that the action of the Knapp--Stein intertwining operators on each $K$-isotypic component is upper triangular with respect to this basis, and we are able to determine the corresponding eigenvalues (see Corollary~\ref{cor:ClosedFormulasIntertwinerMultOneKtypes} and Theorem~\ref{thm:ClosedFormulasIntertwinerEigenvalues}). This allows us to prove Theorem~\ref{thmintroA} and deduce Corollary~\ref{corintroB}. The proof of Theorems~\ref{thmintroC} and \ref{thmintroD} also uses the explicit form of the eigenvalues together with a detailed analysis of the transition between certain $K$-types.
As far as we know
our  method
of constructing
 bases where 
 the Knapp-Stein
 operator acts as upper triangular matrices has
 not appeared before, and it may be useful
 for studying other principal
 series.

\subsection*{Other related works}

This work builds on Kable's paper~\cite{Kab12} where an abstract formula for the Lie algebra action was obtained and where the first vector in our basis of the multiplicity spaces appears. We also view our paper as an attempt to treat the last missing case in the algebraic study of degenerate principal series begun by the last named author~\cite{Zha22,Zha23}. More analytic aspects of the same class of degenerate principal series can be found in \cite{FWZ22}, where the Knapp--Stein operators were studied using the Heisenberg group Fourier transform, and in \cite{Fra22}, where an $L^2$-model for the the minimal representation is constructed also using the Heisenberg group Fourier transform.

\section{Preliminaries}

We introduce some notation for the group $G=G_{2(2)}$, its Lie algebra $\mathfrak{g}=\mathfrak{g}_{2(2)}$, the Heisenberg parabolic subgroup and the corresponding degenerate principal series representations.

\subsection{The Lie algebra $\fg=\fg_{2(2)}$ }
\label{LieAlgPrel}

Let $\fg=\fg_{2(2)}$ denote the split real form of the complex simple Lie algebra of type $G_2$. We introduce some notation, following the conventions in \cite{Kab12}.

Let $\fg_2$ be the complexification of $\fg_{2(2)}$, then we know that with respect to a Cartan subalgebra $\frakh \subseteq \fg_2$ the Lie algebra $\fg_2$ has two simple roots $\alpha$ and $\beta$, with $\alpha$ short and $\beta$ long. We denote the Killing form by $\langle -,-\rangle$ and normalize it such that $\langle \alpha,\alpha \rangle = 2$ and $\langle \beta, \beta \rangle = 6$. Then $\langle \alpha, \beta \rangle = -3$. The highest root is $3 \alpha + 2 \beta$.

We choose a Cartan decomposition $\fg_{2(2)} = \frakk \oplus \frakp$, and we let $\frakh \subseteq \fg_2$ be a Cartan subalgebra such that $\fraka_{\frakp} = \frakh \cap \fg_{2(2)} \subseteq \frakp$ is
a real Cartan subalgebra. Let $\Delta^+$ be a set of positive roots, and for $\nu \in \Delta$ we let $X_{\nu}, H_{\nu}, X_{-\nu}$ be a corresponding $\fsl_2(\RR)$-triple, where $\theta(X_{\mu}) = -X_{-\mu}$ and $H_\nu$ is the co-root of $\nu$. We let $Z_{\mu} = X_{\mu} - X_{-\mu}\in\mathfrak k
$ and $W_{\mu} = X_{\mu} + X_{-\mu}
\in\mathfrak p
$, then the set $\{ Z_{\mu} \ | \ \mu \in \Delta^+ \}$ forms a basis for $\frakk$ and the set $\{ W_{\mu} \ | \ \mu \in \Delta^+ \} \cup \{H_{\alpha}, H_{\beta} \}$ forms a basis of $\frakp$.

We identify the subalgebra $\frakk$ with $\su(2) \oplus \su(2)$ in the following way. We take the triple $$ \begin{pmatrix} i & 0 \\ 0 & -i
\end{pmatrix}, \quad \begin{pmatrix} 0 & 1 \\ -1 & 0
\end{pmatrix}, \quad \begin{pmatrix} 0 & i \\ i & 0
\end{pmatrix} $$ 
as a  "standard"
$\su(2)$-triple. We obtain two $\su(2)$-triples $U_1,U_2,U_3$ and $V_1,V_2,V_3$ in $\frakk$,
matching the 
commutation relations
of the standard 
$\su(2)$-triple above,
where
\begin{equation*}
 U_1 = \frac{1}{2} (3Z_{3\alpha+2\beta} + Z_{\alpha}), \quad U_2 = \frac{1}{2} (Z_{\alpha+\beta} - 3 Z_{3 \alpha + \beta}), \quad U_3 = - \frac{1}{2} (3 Z_{\beta} + Z_{2 \alpha + \beta})
\end{equation*}
and
\begin{equation*}
 V_1 = \frac{1}{2}(Z_{3\alpha + 2 \beta} - Z_{\alpha}), \quad V_2 = \frac{1}{2} (Z_{\alpha + \beta} + Z_{3 \alpha + \beta}), \quad V_3 = \frac{1}{2} (Z_{\beta} - Z_{2 \alpha + \beta}),
\end{equation*}
as in \cite[pp. 100--101]{Kab12}. This specifies an explicit isomorphism $
\frakk=\su(2)_s\oplus
\su(2)_l \cong\su(2)\oplus\su(2)
$, the complexification
of $\su(2)_s$
having the short roots
and  $\su(2)_l$
having the
long roots.
We fix this
realization and
let $K=\SU(2) \times \SU(2) / \{ \pm (I,I) \}$ acting
on $\mathfrak p$ as real (irreducible) linear transformation. We further fix the Cartan subalgebra $\frakt=\RR U_1\oplus\RR V_1$. With respect to this, the complexification of $\frakp$ is isomorphic to the representation $\CC^4 \otimes \CC^2$ of $K$, with highest weight vector $\frac{1}{2}(H_{3 \alpha + 2 \beta} + i W_{3 \alpha + 2 \beta})$.

The group $M_{\frakp} = Z_{\fraka_{\mathfrak{p}}}(K)$, the centralizer of $\fraka_{\mathfrak{p}}$ in $K$, is generated by the elements $m_{\mu} = \exp(\pi Z_{\mu})$ for $\mu \in \Delta^+$ (see \cite[Corollary 2.14]{Vog94}). Under the above identification $K\cong(\SU(2)\times\SU(2))/\{\pm(I,I)\}$ we find that
\begin{align*}
    m_{\alpha} = m_{3 \alpha + 2 \beta} &= \left[ \begin{pmatrix} i & 0 \\ 0 & -i \end{pmatrix}, \begin{pmatrix} i & 0 \\ 0 & -i \end{pmatrix}\right],\\
    m_{\alpha+\beta} = m_{3 \alpha + \beta} &= \left[ \begin{pmatrix} 0 & -1 \\ 1 & 0 \end{pmatrix}, \begin{pmatrix} 0 & 1 \\ -1 & 0 \end{pmatrix}\right],\\
    m_{2 \alpha + \beta} = m_{\beta} &= \left[ \begin{pmatrix} 0 & i \\ i & 0 \end{pmatrix}, \begin{pmatrix} 0 & -i \\ -i & 0 \end{pmatrix}\right],
\end{align*} 
where $[(g,h)]$ denotes the coset of $(g,h)\in\SU(2)\times\SU(2)$ in $K$.

\subsection{The Heisenberg parabolic subalgebra}

The element $E = H_{3 \alpha + 2 \beta}\in\mathfrak{a}_{\mathfrak p}$ has the property that $\ad(E)$ has eigenvalues $0,\pm 1, \pm 2$ on $\fg$. The direct sum of eigenspaces with non-negative eigenvalues is a parabolic subalgebra $\fm+\fa+\fn$ of $\fg$, where $\fa=\mathbb R E$, $\fm$ its orthogonal complement within the $0$-eigenspace of $\ad(E)$, and $\fn=\fn_1+\fn_2$ the sum of the $1$- and $2$-eigenspaces. Note that $\fn_1$ is $4$-dimensional and $\fn_2$ is $1$-dimensional, so $\fn$ is a Heisenberg algebra. Consequently, the half sum of positive roots with respect to $\fa$ is given by $$\rho_{\fg}=3 E^\ast =3,
$$
where $E^\ast\in
\mathfrak a^\ast\simeq \mathbb R$ is  dual to $E$. Moreover, $\fm\simeq\sl(2,\RR)$ acts on $\fn_1$ by its $4$-dimensional irreducible real representation and on $\fn_2$ trivially.

\subsection{Groups and induced representations}
Let $G=G_{2(2)}$ be the adjoint group of $\frakg_{2(2)}$. Its maximal compact subgroup is $K$ above. 
We write $MAN$ for the parabolic subgroup of $G$ with Lie algebra $\fm+\fa+\fn$. Clearly, $N=\exp(\fn)$ is a Heisenberg group and $A=\exp(\fa)$ is one-dimensional. Moreover, $M$ has two connected components, and we denote by $\sgn$ the non-trivial character of $M$.

For $\varepsilon\in\ZZ/2\ZZ, s\in \mathbb C$ and $\nu=\nu(s)=2sE^*\in(\fa_\CC)^*$ we consider the degenerate principal series representation
$$ \pi_{\varepsilon,s} = \Ind_{MAN}^G(\sgn^\varepsilon\otimes\exp(\nu)\otimes 1) $$
realized on the space
\begin{equation}
    \label{def-ind} 
I(\varepsilon,s) = 
\{f:G\to\CC:
\text{$f$ measurable}, f|_K\in L^2(K),f(gman)=\sgn(m)^\varepsilon a^{-\nu}f(g)\}. 
\end{equation}
The representations are unitary for $\nu\in\rho_{\fg}+i\fa^*$ which corresponds to $s\in\frac{3}{2}+i\RR$. We note that the parameter $s$ is the same as in \cite{Kab12}.

\subsection{$K$-types}

To determine the $K$-types of $I(\varepsilon,s)$, we first find the $K$-types of
$$ \pi_s = \Ind_{M_0AN}^G(1\otimes \exp(\nu) \otimes1) \qquad \mbox{on} \qquad I(s) = I(0,s) \oplus I(1,s) $$
and then determine which of them belong to $I(0,s)$ or $I(1,s)$. By restricting functions from $G$ to $K$, we find that, as $K$-representation, $I(s)|_K\simeq L^2(K/L_0)$, where $L_0$ is the identity component of $L=M\cap K$. We note that the Lie algebra of $L_0$ is given by $\frakl_0 = \R (U_1 - 3 V_1)$. The decomposition of $L^2(K/L_0)$ into irreducible representations of $K$ is given by the Peter--Weyl Theorem in terms of the matrix coefficients
$$ \psi_\sigma(\xi_1,\xi_2):K\to\CC, \quad \psi_\sigma(\xi_1,\xi_2)(k)=\langle \sigma(k)\xi_2,\xi_1\rangle_\sigma, $$
where $\xi_1,\xi_2\in E_\sigma$ are vectors in a unitary representation $(\sigma,E_\sigma)$ of $K$ equipped with a
fixed inner product $\langle-,-\rangle_\sigma$. More precisely, the map
$$ \bigoplus_{\sigma\in\widehat{K}}\overline{E_\sigma}\otimes  E_\sigma^{L_0} \to L^2(K/L_0), \quad \overline{\xi_1}\otimes\xi_2 \mapsto\psi_\sigma(\xi_1,\xi_2) $$
is a $K$-equivariant isomorphism onto the dense subspace of $K$-finite vectors in $L^2(K/L_0)$. Here $\overline{E_\sigma}$ denotes the complex conjugate vector spaces and $E_\sigma^{L_0}$ stands
for the subspace of $L_0$-fixed vectors.

Let $\Gamma_{m}$ denote the irreducible representation of $\SU(2)$ of dimension $m+1$. The irreducible representations of $K=(\SU(2) \times \SU(2) )/\{\pm(I,I)\}$ are of the form $\sigma(n,m)=\Gamma_{n}\boxtimes\Gamma_{m}$ with $n\equiv m\pmod2$.
Let $\{\xi_a^n\}_{-n\leq a\leq n,a\equiv n\pmod2}\subseteq\Gamma_n$ denote the basis of $\Gamma_n$ as in Appendix~\ref{appendix} (see also \cite[p.~101]{Kab12}). For $a\equiv m\equiv n\pmod2$ we let
$$ \zeta_a^{n,m} = \xi_{3a}^n\boxtimes\xi_a^m \in \Gamma_n\boxtimes\Gamma_m. $$
Then by \cite[Lemma 4.5]{Kab12} the set
$$ \{\zeta_a^{n,m}:3|a|\leq n,|a|\leq m,a\equiv n\equiv m\pmod2\} $$
is a basis of $(\Gamma_n\boxtimes\Gamma_m)^{L_0}$. 
Denote, for $n,m \in \ZZ_{\geq 0}$ with $n\equiv m\pmod2$,
\begin{equation}
    \label{a-n-m}
    a(n,m):=\dim \left(
(\Gamma_n\boxtimes \Gamma_m)^{L_0}
\right)-1.
\end{equation}
Explicitly we have
$$ a(n,m)=
\begin{cases}
	\min([\frac{n}{3}]-1,m) & \text{if $n\equiv1\pmod3$,} \\
	\min([\frac{n}{3}],m) & \text{else.}
\end{cases}$$
Then  $a(n,m)+1$ is
the multiplicity of the $K$-type $\Gamma_n\boxtimes \Gamma_m$ in $L^2(K/L_0)$.
This yields the $K$-type decomposition of $I(s)=I(0,s)\oplus I(1,s)$.

To deduce from this the $K$-type decompositions of $I(0,s)$ and $I(1,s)$, note that there exists an element $w \in L$ such that $L = L_0 \cup w L_0$. Explicitly in $K =(\SU(2) \times \SU(2) )/\{\pm(I,I)\}$ we can choose
$$ w = \left[ \begin{pmatrix} 0 & 1 \\ -1 & 0
\end{pmatrix}, \begin{pmatrix} 0 & -1 \\ 1 & 0
\end{pmatrix} \right].$$
(Note that $w$ agrees with $\kappa_{2\alpha+\beta}$ in \cite[Lemma 2.3]{Kab12} up to conjugation.) Then $I(\varepsilon,s)$ consists of those $f\in I(s)$ such that $f(gw)=(-1)^\varepsilon f(g)$. In terms of the matrix coefficients $\psi_\sigma(\xi_1,\xi_2)$ with $\xi_2\in E_\sigma^{L_0}$, this translates to $\psi_\sigma(\xi_1,\xi_2)\in I(\varepsilon,s)$ if and only if $\sigma(w)\xi_2=(-1)^\varepsilon\xi_2$. Write $E_\sigma^{L,\varepsilon}$ for the subspace of $E_\sigma^{L_0}$ that has this property. Using the formulas in Appendix~\ref{appendix}, it is easy to see that $w$ acts on the basis vectors $\zeta_a^{n,m}$ of $(\Gamma_n\boxtimes\Gamma_m)^{L_0}$ as
$$ w \cdot \zeta_a^{n,m} = (-1)^{\frac{n-m}{2}} \zeta_{-a}^{n,m} = (-1)^{\frac{m+n}{2} + a} \zeta_{-a}^{n,m}, $$
where we have used that $a\equiv m\pmod2$.
Hence $(\Gamma_n\boxtimes\Gamma_m)^{L,\varepsilon}$ has as a basis the vectors 
\begin{equation}\label{eq:MultiplicitySpaceWeightBasis}
\zeta_a^{n,m} + (-1)^{\frac{m+n}{2} + a + \varepsilon} \zeta_{-a}^{n,m}
\qquad (0 \leq a \leq a(n,m), a \equiv a(n,m) \pmod 2),
\end{equation}
where as a convention the zero vector is omitted.

\begin{remark}
    We note that the $K$-type decomposition is written in the same way as in \cite[Section 3]{Zha23}. The group $L_1$ in \cite{Zha23} is the maximal torus with Lie algebra $\ft$ and the parameter $p$ in \cite{Zha23} is called $a$ in this paper (following the notation of \cite{Kab12}).
\end{remark}

\section{The action of $\mathfrak{p}_{\mathbb{C}}$}

In the previous section we have described the action of $K$ on the degenerate principal series representations $I(\varepsilon,s)$. In view of the Cartan decomposition $\fg=\fk\oplus\fp$, it suffices to understand the action of $\fp$ in order to answer questions about reducibility and unitarity. In this section we therefore study the action of $\fp$, or rather its complexification $\fp_\CC$, on the $K$-finite vectors of $I(s)=I(0,s)\oplus I(1,s)$. Later, by distinguishing between $K$-finite vectors in $I(0,s)$ and $I(1,s)$, we obtain results about the two degenerate principal series representations.

\subsection{General formulas for the action of $\mathfrak{p}_\CC$ on $\Gamma_n \boxtimes \Gamma_m$}

Recall from the previous section that the $K$-finite vectors in $I(s)$ are spanned by the matrix coefficients $\psi_\sigma(\xi_1,\xi_2)$, where $\xi_1\in E_\sigma=\Gamma_n\boxtimes\Gamma_m$ with $n\equiv m\pmod2$ and $\xi_2\in(\Gamma_n\boxtimes\Gamma_m)^{L_0}$ which is spanned by the vectors $\zeta_a^{n,m}$, $3|a|\leq n$, $|a|\leq m$, $a\equiv n\equiv m\pmod2$. In \cite[page 112]{Kab12}, Kable introduced a map
$$ R_s(\sigma): 
\Gamma_n \boxtimes \Gamma_m \rightarrow \mathfrak{p}_\CC \otimes (\Gamma_n \boxtimes \Gamma_m),\,
\sigma=\sigma(n, m),
$$
such that for $Y \in \mathfrak{p}_\CC$, $\xi_1 \in \Gamma_n \boxtimes \Gamma_m$ and $\xi_2 \in (\Gamma_n \boxtimes \Gamma_m)^{L_0}$:
\begin{equation}\label{eq:PactionViaRmap}
    d\pi_s(Y) \psi_{\xi_1, \xi_2} = - \psi_{\overline{Y} \otimes \xi_1,R_s(\sigma) \xi_2}.
\end{equation}
(Compared to \cite{Kab12}, we use the complex conjugate matrix coefficients, so that \eqref{eq:PactionViaRmap} becomes complex linear in $\xi_2$ and the right hand side depends holomorphically on $s$.) Moreover, in \cite[Lemma 4.7]{Kab12} he gave an explicit formula for $R_s(\sigma)$ on the vectors $\zeta_a^{n,m}$:
$$ R_s(\sigma(n,m)) = R_s^+(n,m)+R_s^-(n,m), $$
where
\begin{align*}
	R_s^+(n,m)\zeta_a^{n,m}:={}&(a+s)(\xi_3^3\boxtimes\xi_1^1 )\otimes\zeta_a^{n,m}
	\\&+\frac{n-3a}{2}(\xi_1^3\boxtimes\xi_1^1)\otimes (\xi_{3a+2}^n\boxtimes\xi_a^m)
	\\&+\frac{m-a}{2}(\xi_3^3\boxtimes \xi_{-1}^1)\otimes(\xi_{3a}^n\boxtimes\xi_{a+2}^m),
\end{align*}
and
\begin{align*}
	R_s^-(n,m)\zeta_a^{n,m}:={}&(a-s)(\xi_{-3}^{3}\boxtimes\xi_{-1}^1) \otimes\zeta_a^{n,m}
	\\&-\frac{n+3a}{2}(\xi_{-1}^3\boxtimes\xi_{-1}^1)\otimes (\xi_{3a-2}^n\boxtimes\xi_a^m)
	\\&-\frac{m+a}{2}(\xi_{-3}^3\boxtimes \xi_{1}^1)\otimes(\xi_{3a}^n\boxtimes\xi_{a-2}^m).
\end{align*}

Since this action involves tensor products of representations of $\SU(2)$, we first make their decomposition into irreducibles explicit in terms of the Rankin--Cohen brackets. Abstractly, we know that
$$ \Gamma_n\otimes\Gamma_m \simeq \bigoplus_{k=0}^{\min(n,m)}\Gamma_{n+m-2k}. $$
An explicit isomorphism is given in terms of the Rankin--Cohen brackets $\RC_k=\RC_{n, m; k}:\Gamma_n\otimes \Gamma_m \to \Gamma_{n+m-2k}$, which we normalize as in Appendix~\ref{appendix}.

\begin{proposition}
\label{Mmatrix}
	We have
    $$(\RC_{l}\boxtimes \RC_{l'})R_s^{\pm}(n,m)\zeta^{n,m}_{a}=M^{\pm}_{l,l'}\zeta^{n+3-2l,m+1-2l'}_{a\pm 1}, \qquad (0\le l\le 3, 0\le l'\le 1), $$
	where the matrices $M^\pm=(M^\pm_{l,l'})_{0\leq l\leq 3,0\leq l'\leq1}$ 
    are given by
	$$M^+:=\left(
	\begin{array}{cc}
		\frac{1}{2} (2s -2 a+m+n) & \frac{(a-m) (2s - 2 a - m-2 + n)}{4 m} \\
		\frac{(3 a-n) (6s - 6a -6 + 3 m + n)}{12 n} & \frac{(a-m) (3 a-n) (6 s - 6 a - 12 +n-3 m)}{24 m n} \\
		\frac{(3 a-n) (n - 3a - 2) (6 a- 6 s - 3 m+n+8)}{24 (n-1) n} & \frac{(a-m) (n - 3 a) (n - 3 a-2) (6s - 6 a - 14 - 3 m - n)}{48 m (n-1) n} \\
		\frac{(n - 3a) (n - 3 a - 2) (n - 3 a - 4) (2 a - 2s -m+n+2)}{16 (n-2) (n-1) n} & \frac{(a-m) (3 a-n) (n - 3 a - 2) (n - 3 a - 4) (2s - 2 a - 4 - m - n)}{32 m (n-2) (n-1) n} \\
	\end{array}
	\right)
    $$
    and
    $$M^-:=	\left( \begin{array}{cc}
		- \frac{1}{2} (2s + 2 a+m+n) & \frac{(-a-m) (2s + 2 a - m-2 + n)}{4 m} \\
		\frac{(- 3 a-n) (6s + 6a -6 + 3 m + n)}{12 n} & \frac{(a+m) (-3 a-n) (6 s + 6 a - 12 +n-3 m)}{24 m n} \\
		\frac{(3 a+n) (n + 3a - 2) (-6 a- 6 s - 3 m+n+8)}{24 (n-1) n} & \frac{(-a-m) (n + 3 a) (n + 3 a-2) (6s + 6 a - 14 - 3 m - n)}{48 m (n-1) n} \\
		\frac{(n + 3a) (n + 3 a - 2) (n + 3 a - 4) (- 2 a - 2s -m+n+2)}{16 (n-2) (n-1) n} & \frac{(a+m) (- 3 a-n) (n + 3 a - 2) (n + 3 a - 4) (2s + 2 a - 4 - m - n)}{32 m (n-2) (n-1) n} \\
	\end{array}
	\right).$$
\end{proposition}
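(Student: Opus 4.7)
The statement is essentially a direct expansion: one substitutes the explicit expressions for $R_s^\pm(n,m)\zeta_a^{n,m}$ recalled just above into $(\RC_l\boxtimes\RC_{l'})$, uses the explicit formulas for the Rankin--Cohen brackets acting on the weight basis $\{\xi_b^p\}$ from Appendix~\ref{appendix}, and collects the result. The structural observation that organizes the computation is that each Rankin--Cohen bracket preserves weights: one has
\[
\RC_l(\xi_b^3\otimes\xi_c^n)=\mu_l(b,c;n)\,\xi_{b+c}^{n+3-2l},\qquad
\RC_{l'}(\xi_{b'}^1\otimes\xi_{c'}^m)=\nu_{l'}(b',c';m)\,\xi_{b'+c'}^{m+1-2l'},
\]
for some explicit scalars $\mu_l,\nu_{l'}$ given in the Appendix.

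\textbf{Weight bookkeeping.} For $R_s^+(n,m)\zeta_a^{n,m}$ the three summands $(a+s)(\xi_3^3\boxtimes\xi_1^1)\otimes\zeta_a^{n,m}$, $\frac{n-3a}{2}(\xi_1^3\boxtimes\xi_1^1)\otimes(\xi_{3a+2}^n\boxtimes\xi_a^m)$, $\frac{m-a}{2}(\xi_3^3\boxtimes\xi_{-1}^1)\otimes(\xi_{3a}^n\boxtimes\xi_{a+2}^m)$ all have first-factor $\SU(2)$-weight $3(a+1)$ and second-factor weight $a+1$. Hence after applying $\RC_l\boxtimes\RC_{l'}$ all three contributions are proportional to $\xi_{3(a+1)}^{n+3-2l}\boxtimes\xi_{a+1}^{m+1-2l'}=\zeta_{a+1}^{n+3-2l,m+1-2l'}$, confirming the predicted target vector. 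The analogous analysis for $R_s^-(n,m)\zeta_a^{n,m}$ gives weights $3(a-1)$ and $a-1$, hence the target $\zeta_{a-1}^{n+3-2l,m+1-2l'}$.

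\textbf{Extracting the coefficients.} Writing the scalar in front of $\zeta_{a+1}^{n+3-2l,m+1-2l'}$ gives
\[
M^+_{l,l'}=(a+s)\,\mu_l(3,3a;n)\,\nu_{l'}(1,a;m)+\tfrac{n-3a}{2}\,\mu_l(1,3a+2;n)\,\nu_{l'}(1,a;m)+\tfrac{m-a}{2}\,\mu_l(3,3a;n)\,\nu_{l'}(-1,a+2;m).
\]
Using the closed formulas from the Appendix for $\nu_{l'}$ with $l'\in\{0,1\}$ (two cases) and $\mu_l$ with $l\in\{0,1,2,3\}$ (four cases), one evaluates the eight combinations, factorizes common factors involving $(a-m)$, $(3a-n)$, $(n-3a-2)$, $(n-3a-4)$ and $m$, $n$, $n-1$, $n-2$ in the denominators coming from the Rankin--Cohen normalizations, and observes that the terms not containing $s$ recombine linearly to give a single factor linear in $s$. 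Matching this with the stated entries of $M^+$ is then a finite check. For $M^-$ the computation is identical in structure; alternatively, one can shortcut by observing that the Chevalley involution (which sends $\xi_b^p\mapsto\xi_{-b}^p$ up to signs and sends $s\mapsto -s$, $a\mapsto-a$) intertwines $R_s^+$ with $R_s^-$ and reduces the computation to a sign check row by row.

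\textbf{Main obstacle.} No step is conceptually hard; the entire difficulty is combinatorial bookkeeping. In particular, the row $l=2$ involves the factor $(n-3a-2)$ and the row $l=3$ involves $(n-3a-2)(n-3a-4)$ with denominators $(n-1)n$ and $(n-2)(n-1)n$ respectively, so one must keep careful track of the normalization of $\RC_l$ on $\Gamma_3\otimes\Gamma_n$ when extracting $\mu_l$; any discrepancy in that normalization propagates linearly into the final answer. Once the normalizations of $\RC_l$, $\RC_{l'}$, and the weight basis vectors $\xi_b^p$ are fixed to those used in the Appendix, the proof reduces to a routine, if lengthy, verification that is well-suited to a computer algebra check.
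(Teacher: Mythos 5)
Your proposal is correct and is essentially the paper's own proof, which consists of exactly this: substitute the explicit expressions for $R_s^\pm(n,m)\zeta_a^{n,m}$, apply the Rankin--Cohen bracket formulas from Appendix~\ref{appendix}, use weight preservation to identify the target vector $\zeta_{a\pm1}^{n+3-2l,m+1-2l'}$, and verify the eight coefficients by direct (computer-assisted) expansion. The only quibble is with your optional shortcut for $M^-$: the relevant symmetry is $a\mapsto-a$ together with the weight flip $\xi_b^p\mapsto\xi_{-b}^p$ and an overall sign (compare the coefficients $(a+s)$, $\tfrac{n-3a}{2}$, $\tfrac{m-a}{2}$ of $R_s^+$ with $(a-s)$, $-\tfrac{n+3a}{2}$, $-\tfrac{m+a}{2}$ of $R_s^-$), \emph{without} the substitution $s\mapsto-s$ you mention -- but since your primary route is the direct computation, this does not affect the validity of the argument.
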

\begin{proof}
	Apply the explicit formulas for the Rankin--Cohen brackets from Appendix~\ref{appendix}.
\end{proof}

\subsection{A convenient basis}\label{sec:MultSpaceBasis}

Recall $a(n, m)$
defined in (\ref{a-n-m}).
The vector $\zeta^{n,m}_{a(n,m)}$ is the highest $L_0$-invariant weight vector in $\Gamma_n \boxtimes \Gamma_m$.
Let 
\begin{equation}
\label{r(n,m)}
r(n,m):=\frac{n+m}{2}-2a(n,m).    
\end{equation}
For $s\in\CC$, $a \equiv a(n,m) \pmod 2$ and $k\in\ZZ_{\geq0}$ we define coefficients
\begin{multline*}
    \quad \quad 
    d_{n,m}(s,a,k):=(-1)^{\frac{a(n,m)-a}{2}-k}\binom{a(n,m)-2k}{\frac{a(n,m)-a}{2}-k}\\
    \times\frac{\Gamma(\frac{1+r(n,m)+s}{2}+2k)\Gamma(\frac{1+r(n,m)+s}{2}+a(n,m))}{\Gamma(\frac{1+r(n,m)+s}{2}+k+\frac{a(n,m)-a}{2})\Gamma(\frac{1+r(n,m)+s}{2}+k+\frac{a(n,m)+a}{2})},
\end{multline*}
where we use the notation
$$ {x\choose y}=\frac{\Gamma(x+1)}{\Gamma(y+1)\Gamma(x-y+1)} \qquad (x,y\in\CC). $$
These coefficients are used to define two families of $(\frakk \cap \frakl)$-invariant vectors $v_{n,m}$ and $v'_{n,m}$ for $k\in \ZZ_{\geq 0}$ with $2k\leq a(n,m)$ by
\begin{align*}
    v_{n,m}(s,k) &:= \sum_{\substack{|a| \leq a(n,m)\\a \equiv a(n,m) \pmod 2}} d_{n,m}(s,a,k)\zeta_{a}^{n,m},\\
    v_{n,m}'(s,k) &:= \sum_{\substack{|a| \leq a(n,m)\\a \equiv a(n,m) \pmod 2}} \frac{a}{a(n,m)-2k}d_{n,m}\left(s+1,a,k\right)\zeta_{a}^{n,m}.
\end{align*}
Note that for $0\leq 2k\leq a(n,m)$ and $a\equiv a(n,m)\pmod2$ we have
$$ d_{n,m}(s,a,k)=0 \qquad \mbox{if }|a|>a(n,m)-2k, $$
so that the summation is over $2k - a(n,m)\leq a\leq a(n,m) - 2k$. In this case, we can write

\begin{equation*}
\begin{split}
&{\quad}
\frac{\Gamma(\frac{1+r(n,m)+s}{2}+2k)\Gamma(\frac{1+r(n,m)+s}{2}+a(n,m))}{\Gamma(\frac{1+r(n,m)+s}{2}+k+\frac{a(n,m)-a}{2})\Gamma(\frac{1+r(n,m)+s}{2}+k+\frac{a(n,m)+a}{2})}
\\
 &=\frac{(\frac{1+r(n,m)+s}{2}+k+\frac{a(n,m)+|a|}{2})_{\frac{a(n,m)-|a|}{2}-k}}{(\frac{1+r(n,m)+s}{2}+2k)_{\frac{a(n,m)-|a|}{2}-k}},
\end{split}
\end{equation*}
so $d_{n,m}(s,a,k)$ has simple poles at $s=-1-r(n,m)-4k,-1-r(n,m)-4k-2,\ldots,1-r(n,m)-2k-a(n,m)+|a|$, which are all negative integers. It follows that the $v_{n,m}(s,k)$ are holomorphic in $s \in \CC\setminus(\ZZ_{\leq-1} \cap (\frac{m+n+2}{2} + 2 \ZZ))$ and the $v_{n,m}'(s,k)$ are holomorphic in $s\in \CC\setminus(\ZZ_{\leq-1} \cap (\frac{m+n}{2} + 2 \ZZ))$.

\begin{lemma}
\label{basemultspace}
    Let $n,m\in\ZZ_{\geq0}$ with $n\equiv m\pmod2$ and put $\varepsilon=\frac{m+n}{2}+2\ZZ\in\ZZ/2\ZZ$. Then for $s \in \CC\setminus(\ZZ_{\leq-1} \cap (\frac{m+n+2}{2} + 2 \ZZ))$ the set
    $$ \{v_{n,m}(s,k):0\leq k\leq[\tfrac{a(n,m)}{2}]\} $$
    is a basis of $(\Gamma_n \boxtimes \Gamma_m)^{L,\varepsilon}$, and for $s\in \CC\setminus(\ZZ_{\leq-1} \cap (\frac{m+n}{2} + 2 \ZZ))$ the set
    $$ \{v'_{n,m}(s,k'):0\leq k'\leq[\tfrac{a(n,m)-1}{2}]\} $$
    is a basis of $(\Gamma_n \boxtimes \Gamma_m)^{L, \varepsilon+1}$. In particular, we obtain bases of the multiplicity spaces of all $K$-types in $I(0,s)$ whenever $s \in \CC\setminus(\ZZ_{\leq-1} \cap 2\ZZ+1)$, and similar for $I(1,s)$ whenever $s\in\CC\setminus(\ZZ_{\leq-1}\cap2\ZZ)$.
\end{lemma}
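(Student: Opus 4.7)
The plan is to establish both claims by a triangularity argument in a well-chosen ordering of the basis \eqref{eq:MultiplicitySpaceWeightBasis}. Write $A := a(n,m)$. First I will verify that $v_{n,m}(s,k)\in(\Gamma_n\boxtimes\Gamma_m)^{L,\varepsilon}$ and $v'_{n,m}(s,k')\in(\Gamma_n\boxtimes\Gamma_m)^{L,\varepsilon+1}$. Using the action $w\cdot\zeta_a^{n,m}=(-1)^{\frac{m+n}{2}+a}\zeta_{-a}^{n,m}$, this reduces to the symmetry $d_{n,m}(s,-a,k)=(-1)^{A}d_{n,m}(s,a,k)$, which follows because the denominator $\Gamma(\frac{1+r+s}{2}+k+\frac{A-a}{2})\Gamma(\frac{1+r+s}{2}+k+\frac{A+a}{2})$ is $a\to-a$ symmetric, the two binomial coefficients $\binom{A-2k}{\frac{A\mp a}{2}-k}$ are equal by $\binom{N}{j}=\binom{N}{N-j}$, and the sign changes by $(-1)^{a}$, which equals $(-1)^{A}$ since $a\equiv A\pmod 2$. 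The odd factor $a/(A-2k')$ in the definition of $v'_{n,m}(s,k')$ flips this sign, placing those vectors in the opposite $w$-eigenspace.

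Next I will count: distinguishing the cases $A$ even and $A$ odd and noting when the $a=0$ term of \eqref{eq:MultiplicitySpaceWeightBasis} is excluded, a case check gives $[A/2]+1$ non-zero basis vectors in the subspace containing the $v_{n,m}$ and $[(A-1)/2]+1$ in the subspace containing the $v'_{n,m}$. These match the cardinalities of the proposed families, so the statement reduces to linear independence.

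For independence I exploit triangularity. The binomial coefficient $\binom{A-2k}{\frac{A-a}{2}-k}$ vanishes outside $|a|\leq A-2k$, so $v_{n,m}(s,k)$ is supported on those indices. Indexing the basis vectors of \eqref{eq:MultiplicitySpaceWeightBasis} by $j:=\frac{A-|a|}{2}\in\{0,\ldots,[A/2]\}$, the expansion $v_{n,m}(s,k)=\sum_{j\geq k}c_{k,j}(s)\phi_j$ has coefficients $c_{k,j}(s)=d_{n,m}(s,A-2j,k)$, up to a harmless factor $1/2$ at $j=A/2$ when $A$ is even. Direct substitution at $j=k$, i.e.\ $a=A-2k$, gives binomial $=1$, sign $=1$, and two gamma factors that cancel identically, so the leading coefficient $c_{k,k}(s)=1$ for all $s$. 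Hence the transition matrix is triangular with non-zero diagonal, and the $v_{n,m}(s,k)$ form a basis whenever the coefficients are defined. The identical computation with $s$ replaced by $s+1$ and with the factor $a/(A-2k')$ evaluating to $1$ at $a=A-2k'$ (non-zero because $k'\leq[(A-1)/2]$ forces $A-2k'\geq 1$) handles the $v'_{n,m}(s,k')$. The stated exclusion sets are precisely the pole locations of $d_{n,m}(s,a,k)$ listed just after its definition, so on the complement all coefficients are holomorphic and the leading identity $c_{k,k}(s)=1$ persists. The last sentence of the lemma follows by specialising $\varepsilon$ to $0$ or $1$ and intersecting the exclusion lists. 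The main arithmetic is the symmetry calculation in the first paragraph and the cancellation $c_{k,k}(s)=1$ in the third; I do not expect either to pose a serious obstacle.
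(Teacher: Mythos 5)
Your proposal is correct and follows essentially the same route as the paper's proof: membership in the $\pm1$-eigenspaces of $w$ via the symmetry $d_{n,m}(s,-a,k)=(-1)^a d_{n,m}(s,a,k)$, a dimension count from \eqref{eq:MultiplicitySpaceWeightBasis}, and linear independence from the triangular support $|a|\leq a(n,m)-2k$ together with the leading coefficient $d_{n,m}(s,a(n,m)-2k,k)=1$. The only cosmetic difference is that you phrase the triangularity via an explicit transition matrix indexed by $j=\frac{a(n,m)-|a|}{2}$, whereas the paper simply displays the leading terms of $v_{n,m}(s,k)$ and $v'_{n,m}(s,k)$.
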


\begin{proof}
    By \eqref{eq:MultiplicitySpaceWeightBasis} we know that
    $$ \dim\left((\Gamma_n\boxtimes\Gamma_m)^{L,\delta}\right) = \begin{cases}1+[\frac{a(n,m)}{2}]&\mbox{if }\delta\equiv\frac{m+n}{2}\pmod2,\\1+[\frac{a(n,m)-1}{2}]&\mbox{if }\delta\equiv\frac{m+n+2}{2}\pmod2.\end{cases} $$
    Since $d_{n,m}(s,a,k) = (-1)^{a} d_{n,m}(s,-a,k)$ it follows that $v_{n,m}(s,k)\in(\Gamma_n \boxtimes \Gamma_m)^{L,\varepsilon}$ and $v_{n,m}'(s,k')\in(\Gamma_n \boxtimes \Gamma_m)^{L,\varepsilon+1}$, where we let $\varepsilon\equiv\frac{m+n}{2}\pmod2$. Therefore, we only need to check linear independence of the vectors $\{v_{n,m}(s,k)\}$ and $\{v_{n,m}'(s,k')\}$.
	Clearly we have
    $$ d_{n,m}(s,a,k)=\frac{a}{a(n,m)-2k}d_{n,m}(s+ 1,a,k)=0 \qquad \mbox{whenever }|a|>a(n,m)-2k, $$
    while
    $$ d_{n,m}(s,a(n,m)-2k,k)=\frac{a(n,m)-2k}{a(n,m)-2k}d_{n,m}(s+1,a(n,m)-2k,k)=1 $$
    and
    $$ d_{n,m}(s,2k-a(n,m),k) = (-1)^{m} = -\frac{2k-a(n,m)}{a(n,m)-2k}d_{n,m}(s+1,2k-a(n,m),k). $$
    This implies
    \begin{align*}
        v_{n,m}(s,k) &= \zeta_{a(n,m)-2k}^{n,m}+(-1)^m\zeta_{2k-a(n,m)}^{n,m}+\sum_{|a|<a(n,m)-2k}(\cdots)\zeta_a^{n,m},\\
        v_{n,m}'(s,k) &= \zeta_{a(n,m)-2k}^{n,m}+(-1)^{m+1}\zeta_{2k-a(n,m)}^{n,m}+\sum_{|a|<a(n,m)-2k}(\cdots)\zeta_a^{n,m},
    \end{align*}
    so the claim follows.
\end{proof}

\begin{prop+}\label{prop:v_action} For all $(n,m)$ we have:
	\begin{align}
	    (\RC_0\otimes \RC_0)R_s(\sigma(n,m))v_{n,m}(s,k) &= (2k+r(n,m)+a(n,m)+s)v_{n+3,m+1}(s,k),\label{prop:v_action1}\\
	   (\RC_0\otimes \RC_0)R_s(\sigma(n,m))v'_{n,m}(s,k) &= (2k+r(n,m)+a(n,m)+s)v'_{n+3,m+1}(s,k).\label{prop:v_action2}
    \end{align}
	For all $(n,m)$ with $a(n+1,m+1)=a(n,m)+1$ we have:
	\begin{multline}
		(\RC_1\otimes \RC_0)R_s(\sigma(n,m))v_{n,m}(s,k)=\\-\frac{(6k-3a(n,m)+n)(3s+r(n,m)-a(n,m)+m+6k-3)}{6n}v'_{n+1,m+1}(s,k),\label{prop:v_action3}
	\end{multline}
	and
	\begin{multline}
		(\RC_1\otimes \RC_0)R_s(\sigma(n,m))v'_{n,m}(s,k)=\\-\frac{(6k-3a(n,m)+n)(3s+r(n,m)-a(n,m)+m+6k-3)}{6n}v_{n+1,m+1}(s,k)\\
		-\frac{2(6k-3 a(n,m)+n+3)(2k+r(n,m)+a(n,m)+s)(2k+r(n,m)+a(n,m)+s+1)}{3n (4k+r(n,m)+s) (4 k+r(n,m)+s+2)} \\\times  (3s+r(n,m)-a(n,m)+m+6k)
		v_{n+1,m+1}(s,k+1).\label{prop:v_action4}
	\end{multline}
For all $(n,m)$ with $a(n+3,m-1)=a(n,m)+1$ we have:
\begin{multline}
	(\RC_0\otimes \RC_1)R_s(\sigma(n,m))v_{n,m}(s,k)=\\ \frac{(2k+m-a(n,m))(2s+n-m-2a(n,m)+4k-2)}{4m}v'_{n+3,m-1}(s,k),\label{prop:v_action5}
\end{multline}
and
	\begin{multline}
	(\RC_0\otimes \RC_1)R_s(\sigma(n,m))v'_{n,m}(s,k)=\\ \frac{(2k+m-a(n,m))(2s+n-m-2a(n,m)+4k-2)}{4m}v_{n+3,m-1}(s,k)\\
	-\frac{(2k+m-a(n,m)+1)(2s+n-m-2a(n,m)+4k)(2s+n+m-2a(n,m)+4k)}{m(2s+n+m-4a(n,m)+8k)(2s+n+m-4a(n,m)+8k+4)} \\\times  (2s+n+m-2a(n,m)+4k+2)
	v_{n+3,m-1}(s,k+1).\label{prop:v_action6}
\end{multline}
\end{prop+}
\begin{proof}
	First note that $a(n+3,m+1)=a(n,m)+1$ for all $n,m \in \ZZ_{\geq 0}$.
	We start with the first statement. We have
	\begin{multline*}
		(\RC_0\otimes \RC_0)R_s(\sigma(n,m))v_{n,m}(s,k)= \\ \sum_{\substack{|a| \leq a(n,m)\\a \equiv a(n,m) \pmod 2}}\Big(d_{n,m}(s,a,k)(\RC_0\otimes \RC_0)R^+_s(n,m)\zeta_{a}^{n,m}\\
        + d_{n,m}(s,a,k)(\RC_0\otimes \RC_0)R^-_s(n,m)\zeta_{a}^{n,m}\Big)
	\end{multline*}
which is by Proposition~\ref{Mmatrix} equal to
\begin{multline*}
	\sum_{\substack{|a| \leq a(n,m)\\a \equiv a(n,m) \pmod 2}} \bigg( \left(s+\frac{n+m}{2}-a \right) d_{n,m}(s,a,k)\zeta_{a + 1}^{n+3,m+1}\\ + \left(-s-\frac{n+m}{2}-a \right)d_{n,m}(s,a,k)\zeta_{a - 1}^{n+3,m+1}\bigg).
\end{multline*}
Changing the summation from $a$ to $a\pm1$ and using that $a(n+3,m+1)=a(n,m)+1$, we find
\begin{multline*}
		\sum_{\substack{|a| \leq a(n+3,m+1)\\a \equiv a(n+3,m+1) \pmod 2}} \bigg( \left(s+\frac{n+m}{2}-(a-1) \right) d_{n,m}(s,a-1,k)\\ + \left(-s-\frac{n+m}{2}-(a+1) \right)d_{n,m}(s,a+1,k)\bigg)\zeta_{a}^{n,m}.
\end{multline*}
An easy calculation using the definition of $d_{n,m}(s,a,k)$ shows that
\begin{multline*}
	\left(s+\frac{n+m}{2}-(a-1) \right) d_{n,m}(s,a-1,k) + \left(-s-\frac{n+m}{2}-(a+1) \right)d_{n,m}(s,a+1,k)\\=(2k+r(n,m)+a(n,m)+s)d_{n+3,m+1}(s,a,k).
\end{multline*}
The other equations follow in the same way by applying the explicit formulas of Proposition \ref{Mmatrix} for $R_s(\sigma(n,m))\zeta_a^{n,m}$.
\end{proof}

For $n\equiv m\pmod2$ and $s\in\CC$ we consider in $(\Gamma_n\boxtimes\Gamma_m)^{L_0}$ 
the ordered basis
\begin{equation}\label{eq:basis_ordering}
	\mathcal{B}_s^{n,m}:=(v_{n,m}(s,0),v'_{n,m}(s,0),v_{n,m}(s,1),v'_{n,m}(s,1), \dots).
\end{equation}
In the following section we use this basis to determine the transition matrices of the Knapp--Stein intertwining operators between $K$-isotypic components with respect to this basis.

\begin{remark}
In \cite[page 130]{Kab12}, Kable defines for every $m\in\ZZ_{\geq0}$ and $r\in\{0,1,2\}$ and all $\lambda\in\CC$ a vector $u_m(r,\lambda)\in(\Gamma_{3m+2r}\boxtimes\Gamma_m)^{L_0}$ by
$$ u_m(r,\lambda) = \sum_{\substack{|a| \leq m\\a \equiv m \pmod 2}}(-1)^{\frac{m-a}{2}}\binom{m}{\frac{m-a}{2}}\frac{\Gamma(\lambda)\Gamma(\lambda+m)}{\Gamma(\lambda+\frac{m-a}{2})\Gamma(\lambda+\frac{m+a}{2})}\zeta_{a}^{3m+2r,m}. $$
Since $a(3m+2r,m)=m$, this coincides with our first basis vector $v_{3m+2r,m}(s,0)$ for $\lambda=\frac{1+r+s}{2}$. In fact, Kable's formula was our inspiration to find a basis for the whole multiplicity space $(\Gamma_{3m+2r}\boxtimes\Gamma_m)^{L_0}$ that behaves well with respect to the action of the Knapp--Stein intertwining operators. The action of the Knapp--Stein operators on this basis is studied in the next section.
\end{remark}

\section{Knapp--Stein intertwining operators}

We study the action of the Knapp--Stein intertwining operator on each $K$-isotypic component in the degenerate principal series.

\subsection{The intertwining property}

Since the parabolic subgroup $MAN$ and its opposite are conjugate, there exists a meromorphic family of intertwining operators
$$ A(\varepsilon,s):I(\varepsilon,s)\to I(\varepsilon,3-s), $$
intertwining the representations $\pi_{\varepsilon,s}$ and $\pi_{\varepsilon,3-s}$. This family of operators is unique up to normalization by a meromorphic function of $s$. To ease notation, we first consider intertwining operators
$$ A(s):I(s)\to I(3-s). $$
Since $I(s)=I(0,s)\oplus I(1,s)$, these are linear combinations of $A(0,s)$ and $A(1,s)$.

In this section, we determine the action of $A(s)$ on the $K$-isotypic components. Since $A(s)$ is in particular $K$-intertwining, it leaves each $K$-isotypic component $\overline{E_\sigma}\otimes E_\sigma^{L_0}$ invariant. By Schur's Lemma, the restriction of $A(s)$ to $\overline{E_\sigma}\otimes E_\sigma^{L_0}$ is given by $\id_\sigma\otimes A_\sigma(s)$ for some $A_\sigma(s)\in\End(E_\sigma^{L_0})$. For $\sigma=\sigma(n,m)$ we write $A_{n,m}(s)$ for the transformation matrix of the linear map $A_{\sigma(n,m)}(s)$ with respect to the bases $\mathcal{B}_s^{n,m}$ and $\mathcal{B}_{3-s}^{n,m}$.

To determine $A_{n,m}(s)$, we use the intertwining property for the derived representation $d\pi_s$ of $\fg_\CC$:
$$
A(s)\circ d\pi_s(X) = d\pi_{3-s}(X)\circ A(s), \quad X\in \fg_{\mathbb C}.
$$
By \eqref{eq:PactionViaRmap}, this implies 
\begin{equation}\label{eq:IntertwiningForLinearMaps}
    R_{3-s}(\sigma(n,m))\circ A_{\sigma(n,m)}(s) = \sum_{n',m'}A_{\sigma(n',m')}(s)\circ R_s(\sigma(n,m)),
\end{equation}
where the sum rums over all $n',m'$ such that $\Gamma_{n'}\boxtimes\Gamma_{m'}$ occurs in the tensor product $\fp_\CC\otimes(\Gamma_n\boxtimes\Gamma_m)$. Write $T_{n,m}^{n',m'}(s)$ for the transformation matrix of the linear map
$$ (\RC_l\otimes\RC_{l'})\circ R_s(\sigma(n,m)):(\Gamma_n\boxtimes \Gamma_m)^{L_0} \to (\Gamma_{n'}\boxtimes \Gamma_{m'})^{L_0} $$
with respect to the bases $\mathcal{B}_s^{n,m}$ and $\mathcal{B}_s^{n',m'}$, where $n'=n+3-2l$ and $m'=m+1-2l'$. Then \eqref{eq:IntertwiningForLinearMaps} can be written as
\begin{equation}\label{eq:IntertwiningForMatrices}
    T_{n,m}^{n',m'}(3-s)\cdot A_{n,m}(s) = A_{n',m'}(s)\cdot T_{n,m}^{n',m'}(s), \qquad (n'\in\{n\pm1,n\pm3\}, m'\in\{m\pm1\}).
\end{equation}
Note that $T_{n,m}^{n',m'}(s)$ is essentially determined in Proposition~\ref{Mmatrix}. Some matrix entries are even determined explicitly in Proposition~\ref{prop:v_action}.

\subsection{Action on multiplicity-one $K$-types}

We first focus on $K$-types with multiplicity-one. These are the $K$-types $\sigma(n,m)$ with $m=0$ or $n\in\{0,2,4\}$, and in this case the basis $\mathcal{B}_s^{n,m}$ consists of the single vector $v_{n,m}(s,0)=\zeta_0^{n,m}$. Hence, $A_{n,m}(s)$ is a scalar. 
To obtain recurrence relations for these scalars, we first study the Lie algebra action between multiplicity-one $K$-types.

\begin{lemma}\label{LieAlg-one-mult-K-types}
For $r\in\ZZ_{\geq0}$ and $s\in\CC$:
\begin{align*}
    T_{2r+3,1}^{2r+4,0}({s})T_{2r,0}^{2r+3,1}(s) &= -\frac{2(r+3)(r+s)(r+3s-3)}{3(2r+3)},\\
    T_{3,2r+1}^{2,2r+2}({s})T_{0,2r}^{3,2r+1}(s) &= -\frac{2}{3}(r+s)(3r+3s-1),\\
    T_{5,2r+1}^{4,2r+2}({s})T_{2,2r}^{5,2r+1}(s) &= -\frac{2}{5}(r+s+1)(3r+3s-2),\\
    T_{3,2r+1}^{4,2r}({s})T_{0,2r}^{3,2r+1}(s) &= \frac{2(r+1)(r+s)(r-s+1)}{(2r+1)}.
\end{align*} 
\end{lemma}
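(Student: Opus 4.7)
In each of the four identities, the source $K$-type $(n,m)$ and the target $K$-type $(n'',m'')$ have multiplicity one (i.e.\ $a(n,m)=a(n'',m'')=0$), while the intermediate $K$-type $(n',m')$ satisfies $a(n',m')=1$ and therefore has multiplicity two. Consequently the product $T_{n',m'}^{n'',m''}(s)\cdot T_{n,m}^{n',m'}(s)$ is the composition of a $1\times 2$ row with a $2\times 1$ column and thus a scalar, exactly the quantity to be computed.

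First I would compute the ``upward'' factor $T_{n,m}^{n',m'}(s)$. In all four cases the step is $(n,m)\to(n+3,m+1)$, which corresponds to the projection $(\RC_0\otimes\RC_0)$. Formula \eqref{prop:v_action1} of Proposition~\ref{prop:v_action}, specialized to $a(n,m)=0$ and $k=0$, gives
$$T_{n,m}^{n',m'}(s)\,v_{n,m}(s,0) = (s+r(n,m))\,v_{n',m'}(s,0),$$
producing the factors $(r+s)$ in cases~1,~2,~4 and $(r+s+1)$ in case~3, since $r(2r,0)=r(0,2r)=r$ whereas $r(2,2r)=r+1$.

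The ``second'' factor $T_{n',m'}^{n'',m''}(s)$ requires the projection $(\RC_1\otimes\RC_1)$ in cases~1 and~4 and $(\RC_2\otimes\RC_0)$ in cases~2 and~3, neither of which is directly covered by Proposition~\ref{prop:v_action}. I would therefore read off the relevant entries from the matrices $M^\pm$ of Proposition~\ref{Mmatrix}. A short evaluation of $d_{n',m'}(s,\pm 1,0)$ using $a(n',m')=1$ shows that in all four cases
$$v_{n',m'}(s,0) = \zeta_1^{n',m'} - \zeta_{-1}^{n',m'},$$
so it suffices to apply $R_s^{\pm}(n',m')$ to $\zeta_{\pm1}^{n',m'}$. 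The decisive observation is that most of the eight relevant entries vanish: one of the factors $(3a\pm n)$, $(n\mp 3a-2)$, $(a\pm m)$ in the formulas for $M_{1,1}^{\pm}$ and $M_{2,0}^{\pm}$ becomes zero for the specific small values of $(n',m')$ and $a=\pm 1$ occurring here, or else the putative target $\zeta_{a\pm1}^{n'',m''}$ is itself zero because $|a\pm 1|>m''$ or $3|a\pm 1|>n''$. The two surviving contributions combine into a scalar multiple of $v_{n'',m''}(s,0)=\zeta_0^{n'',m''}$, and that scalar is the required second factor.

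The hard part is not conceptual but purely computational bookkeeping: Proposition~\ref{Mmatrix} contains sixteen rather heavy rational entries, and one must carefully substitute $a=\pm 1$ and the correct $(n',m')$ into them, keep track of the signs coming from the coefficients $\pm 1$ in $v_{n',m'}(s,0)$, and verify that the surviving numerators and denominators collapse to the stated right-hand sides. Once each second factor has been extracted in closed form, multiplication by the first factor from \eqref{prop:v_action1} yields the four identities.
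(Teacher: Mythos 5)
Your proposal is correct and follows essentially the same route as the paper: the first factor comes from formula \eqref{prop:v_action1} specialized to $a(n,m)=0$, $k=0$, and the second factor is computed by applying the entries $M^{\pm}_{1,1}$ (cases 1, 4) resp. $M^{\pm}_{2,0}$ (cases 2, 3) of Proposition~\ref{Mmatrix} to $v_{n',m'}(s,0)=\zeta_1^{n',m'}-\zeta_{-1}^{n',m'}$, where the vanishing of the factors $(a\pm m)$ and $(3a\pm n)$ kills most terms. This is precisely how the paper proves the first identity (leaving the other three as "similar"), so the only remaining work in your plan is the routine substitution you already identify.
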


\begin{proof}
    We show how to prove the first identity, the others follow similarly. Note that by analyticity it suffices to show the identities for generic $s$ where the vectors $v_{n,m}(s,k)$ are defined. By Proposition~\ref{prop:v_action}
    $$ (\RC_0\otimes\RC_0)\circ R_s(\sigma(2r,0))v_{2r,0}(s,0) = (r+s)v_{2r+3,1}(s,0). $$
    Since $v_{2r+3,1}(s,0)=\zeta^{2r+3,1}_1-\zeta^{2r+3,1}_{-1}$, we use Proposition~\ref{Mmatrix} to compute
    \begin{align*}
        (\RC_1\otimes\RC_1)\circ R_s(\sigma(2r+3,1))\zeta^{2r+3,1}_1 &= -\frac{(r+3)(r+3s-3)}{3(2r+3)}\zeta_0^{2r+4,0},\\
        (\RC_1\otimes\RC_1)\circ R_s(\sigma(2r+3,1))\zeta^{2r+3,1}_{-1} &= \frac{(r+3)(r+3s-3)}{3(2r+3)}\zeta_0^{2r+4,0}.
    \end{align*}
    Since $v_{2r+4,0}(s,0)=\zeta_0^{2r+4,0}$, this shows the first formula.
\end{proof}

\begin{lemma}
\label{KS-one-mult-K-types}
For $r\in\ZZ_{\geq0}$ and $s\in\CC$, $\tilde{s}=3-s$, the following identities hold:
\begin{align}
    (r+s)(r+3s-3) A_{2r+4,0}(s) &= (r+\tilde{s})(r+3\tilde{s}-3) A_{2r,0}(s),\label{eq:KSrecurrence1}\\
    (r+s)(3r+3s-1) A_{2,2r+2}(s) &= (r+\tilde{s})(3r+3\tilde{s}-1) A_{0,2r}(s),\label{eq:KSrecurrence2}\\
    (r+s+1)(3r+3s-2) A_{4,2r+2}(s) &= (r+\tilde{s}+1)(3r+3\tilde{s}-2) A_{2,2r}(s),\label{eq:KSrecurrence3}\\
    (r+s)(r-s+1) A_{4,2r}(s) &= (r+\tilde{s})(r-\tilde{s}+1) A_{0,2r}(s).\label{eq:KSrecurrence4}
\end{align}
Moreover, \eqref{eq:KSrecurrence2}--\eqref{eq:KSrecurrence4} imply
\begin{align}
    &(r+s)^2(3r+3s-1)(3r+3s+1)A_{0,2r+4} = (r+\tilde{s})^2(3r+3\tilde{s}-1)(3r+3\tilde{s}+1)A_{0,2r},\label{eq:KSrecurrence5}\\
    &(r+s-1)(r+s+1)(3r+3s-2)(3r+3s+2)A_{2,2r+4}\notag\\
    &\hspace{5cm}= (r+\tilde{s}-1)(r+\tilde{s}+1)(3r+3\tilde{s}-2)(3r+3\tilde{s}+2)A_{2,2r},\label{eq:KSrecurrence6}\\
    &(r+s-2)(r+s+2)(3r+3s-1)(3r+3s+1)A_{4,2r+4}\notag\\
    &\hspace{5cm}= (r+\tilde{s}-2)(r+\tilde{s}+2)(3r+3\tilde{s}-1)(3r+3\tilde{s}+1)A_{4,2r}.\label{eq:KSrecurrence7}
\end{align}
\end{lemma}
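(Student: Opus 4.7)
The strategy is to apply the intertwining relation \eqref{eq:IntertwiningForMatrices} twice in succession, chaining two multiplicity-one $K$-types through an intermediate $K$-type (whose multiplicity may be larger than one). For a multiplicity-one $K$-type $(n,m)$ the operator $A_{n,m}(s)$ is a scalar, so \eqref{eq:IntertwiningForMatrices} reads
\begin{equation*}
T_{n,m}^{n',m'}(3-s)\,A_{n,m}(s) = A_{n',m'}(s)\,T_{n,m}^{n',m'}(s),
\end{equation*}
an identity of column vectors in the multiplicity space of $(n',m')$. Multiplying this on the left by the row vector $T_{n',m'}^{n'',m''}(3-s)$ coming from a second multiplicity-one $K$-type $(n'',m'')$ and applying \eqref{eq:IntertwiningForMatrices} once more to pull $A_{n',m'}(s)$ past it, I obtain
\begin{equation*}
\bigl[T_{n',m'}^{n'',m''}(3-s)\,T_{n,m}^{n',m'}(3-s)\bigr]\,A_{n,m}(s) = A_{n'',m''}(s)\,\bigl[T_{n',m'}^{n'',m''}(s)\,T_{n,m}^{n',m'}(s)\bigr].
\end{equation*}
Both products in brackets are scalars, and they are precisely the quantities computed in Lemma~\ref{LieAlg-one-mult-K-types}.

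Each of \eqref{eq:KSrecurrence1}--\eqref{eq:KSrecurrence4} then follows by choosing the right intermediate $K$-type $(n',m')$ and dividing out the common $s$-independent prefactor. I plan to use the chains $(2r,0)\to(2r+3,1)\to(2r+4,0)$ for \eqref{eq:KSrecurrence1}, $(0,2r)\to(3,2r+1)\to(2,2r+2)$ for \eqref{eq:KSrecurrence2}, $(2,2r)\to(5,2r+1)\to(4,2r+2)$ for \eqref{eq:KSrecurrence3}, and $(0,2r)\to(3,2r+1)\to(4,2r)$ for \eqref{eq:KSrecurrence4}. The four scalars in Lemma~\ref{LieAlg-one-mult-K-types} then match exactly the $s$- and $\tilde{s}$-dependent polynomial factors appearing in these recurrences, after cancellation of the common numerical prefactor.

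For \eqref{eq:KSrecurrence5}--\eqref{eq:KSrecurrence7} no further use of the Lie algebra action is needed: each of them follows from chaining three of the recurrences \eqref{eq:KSrecurrence2}--\eqref{eq:KSrecurrence4} (suitably shifted in $r$) through a three-step path of multiplicity-one $K$-types. Concretely, I would take $(0,2r)\to(2,2r+2)\to(4,2r+4)\to(0,2r+4)$ for \eqref{eq:KSrecurrence5}, $(2,2r)\to(4,2r+2)\to(0,2r+2)\to(2,2r+4)$ for \eqref{eq:KSrecurrence6}, and $(4,2r)\to(0,2r)\to(2,2r+2)\to(4,2r+4)$ for \eqref{eq:KSrecurrence7}. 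In each case two of the six polynomial factors cancel after the substitution $\tilde{s}=3-s$, and the remaining quotient is the stated identity.

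The main point to watch is that when the intermediate $K$-type $(n',m')$ has multiplicity greater than one, the operator $A_{n',m'}(s)$ is a genuine matrix that I never try to compute. The mechanism is that $A_{n',m'}(s)$ is squeezed between the column vector $T_{n,m}^{n',m'}(s)$ and the row vector $T_{n',m'}^{n'',m''}(3-s)$, so a single application of \eqref{eq:IntertwiningForMatrices} at the second step eliminates it and leaves only the scalar compositions already provided by Lemma~\ref{LieAlg-one-mult-K-types}. This is the crucial reason why recurrences involving only the scalars $A_{n,m}(s)$ at multiplicity-one $K$-types can be derived without any information about $A_{n',m'}(s)$ at higher-multiplicity intermediate $K$-types beyond its mere existence.
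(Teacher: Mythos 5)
Your proposal is correct and follows essentially the same route as the paper: the paper likewise derives \eqref{eq:KSrecurrence1}--\eqref{eq:KSrecurrence4} by sandwiching the (possibly non-scalar) operator $A_{n',m'}(s)$ at the intermediate $K$-types $(2r+3,1)$, $(3,2r+1)$, $(5,2r+1)$ between the two transition maps via a double application of \eqref{eq:IntertwiningForMatrices}, then invokes Lemma~\ref{LieAlg-one-mult-K-types} and cancels the $s$-independent prefactors, and obtains \eqref{eq:KSrecurrence5}--\eqref{eq:KSrecurrence7} by combining \eqref{eq:KSrecurrence2}--\eqref{eq:KSrecurrence4} exactly as in your three-step chains. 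Your write-up just makes explicit the elimination mechanism and the chains that the paper leaves implicit.
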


\begin{proof}
Identities \eqref{eq:KSrecurrence1}--\eqref{eq:KSrecurrence4} are easily verified using the intertwining identities
\begin{align*}
    A_{2r+4,0}(s)T_{2r+3,1}^{2r+4,0}({s})T_{2r,0}^{2r+3,1}(s) &= T_{2r+3,1}^{2r+4,0}(\tilde{s})T_{2r,0}^{2r+3,1}(\tilde{s})A_{2r,0}(s),\\
    A_{2,2r+2}(s)T_{3,2r+1}^{2,2r+2}({s})T_{0,2r}^{3,2r+1}(s) &= T_{3,2r+1}^{2,2r+2}(\tilde{s})T_{0,2r}^{3,2r+1}(\tilde{s})A_{0,2r}(s),\\
    A_{4,2r+2}(s)T_{5,2r+1}^{4,2r+2}({s})T_{2,2r}^{5,2r+1}(s) &= T_{5,2r+1}^{4,2r+2}(\tilde{s})T_{2,2r}^{5,2r+1}(\tilde{s})A_{2,2r}(s),\\
    A_{4,2r}(s)T_{3,2r+1}^{4,2r}({s})T_{0,2r}^{3,2r+1}(s) &= T_{3,2r+1}^{4,2r}(\tilde{s})T_{0,2r}^{3,2r+1}(\tilde{s})A_{0,2r}(s),
\end{align*}
which follow from \eqref{eq:IntertwiningForMatrices}, as well as Lemma~\ref{LieAlg-one-mult-K-types}. Combining \eqref{eq:KSrecurrence2}--\eqref{eq:KSrecurrence4} shows the remaining identities.
\end{proof}

Solving these recurrence relations yields the following formulas, which should be viewed as meromorphic identities in $s\in\CC$:

\begin{corollary}\label{cor:ClosedFormulasIntertwinerMultOneKtypes}
For $r\in\ZZ_{\geq0}$ and $s\in\CC$, $\tilde{s}=3-s$:
\begin{align*}
    A_{4r,0} &= \frac{(\frac{\tilde{s}}{2})_r(\frac{3\tilde{s}-3}{2})_r}{(\frac{s}{2})_r(\frac{3s-3}{2})_r} A_{0,0}, & A_{4r+2,0}  &= \frac{(\frac{\tilde{s}+1}{2})_r(\frac{3\tilde{s}-2}{2})_r}{(\frac{s+1}{2})_r(\frac{3s-2}{2})_r} A_{2,0},\\
    A_{0,4r} &= \frac{(\frac{\tilde{s}}{2})_r^2(\frac{3\tilde{s}-1}{6})_{r}(\frac{3\tilde{s}+1}{6})_{r}}{(\frac{s}{2})_r^2(\frac{3s-1}{6})_{r}(\frac{3s+1}{6})_{r}} A_{0,0}, &  A_{0,4r+2} &= \frac{(\frac{\tilde{s}+1}{2})_r(\frac{\tilde{s}-1}{2})_{r+1}(\frac{3\tilde{s}+2}{6})_{r}(\frac{3\tilde{s}-2}{6})_{r+1}}{   (\frac{s+1}{2})_r(\frac{s-1}{2})_{r+1}(\frac{3s+2}{6})_{r}(\frac{3s-2}{6})_{r+1}  } A_{2,0},\\
	A_{2,4r+2} &= \frac{(\frac{\tilde{s}}{2})_{r+1}(\frac{\tilde{s}}{2})_r(\frac{3\tilde{s}+1}{6})_r(\frac{3\tilde{s}-1}{6})_{r+1}}{(\frac{s}{2})_{r+1}(\frac{s}{2})_r(\frac{3s+1}{6})_r(\frac{3s-1}{6})_{r+1}} A_{0,0}, & A_{2,4r} &= \frac{(\frac{\tilde{s}+1}{2})_{r}(\frac{\tilde{s}-1}{2})_r(\frac{3\tilde{s}+2}{6})_r(\frac{3\tilde{s}-2}{6})_{r}}{(\frac{s+1}{2})_{r}(\frac{s-1}{2})_r(\frac{3s+2}{6})_r(\frac{3s-2}{6})_{r}} A_{2,0},\\
    A_{4,4r} &= \frac{(\frac{\tilde{s}}{2})_{r-1}(\frac{\tilde{s}}{2})_{r+1}(\frac{3\tilde{s}-1}{6})_r(\frac{3\tilde{s}+1}{6})_r}{(\frac{s}{2})_{r-1}(\frac{s}{2})_{r+1}(\frac{3s-1}{6})_r(\frac{3s+1}{6})_r}A_{0,0}, & A_{4,4r+2} &= \frac{(\frac{\tilde{s}+1}{2})_{r+1}(\frac{\tilde{s}-1}{2})_{r}(\frac{3\tilde{s}+2}{6})_{r}(\frac{3\tilde{s}-2}{6})_{r+1}}{(\frac{s+1}{2})_{r+1}(\frac{s-1}{2})_{r}(\frac{3s+2}{6})_{r}(\frac{3s-2}{6})_{r+1}} A_{2,0}.
\end{align*}
\end{corollary}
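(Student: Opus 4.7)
The plan is to derive all eight closed-form expressions by iterating the recurrence relations of Lemma~\ref{KS-one-mult-K-types} and converting the resulting finite products into Pochhammer symbols via the identity $\prod_{j=0}^{r-1}(aj+b)=a^{r}(b/a)_{r}$.

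For the row $m=0$, I would iterate \eqref{eq:KSrecurrence1}, which increments the first index by $4$, a total of $r$ times: starting from $A_{0,0}$ this gives $A_{4r,0}$, and starting from $A_{2,0}$ it gives $A_{4r+2,0}$. Each of the resulting numerator and denominator factors as two arithmetic progressions in steps of $2$, which the Pochhammer identity rewrites exactly as stated.

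For the columns $n\in\{0,2,4\}$, I would organise by the residue of $m$ modulo $4$. The recurrences \eqref{eq:KSrecurrence5}--\eqref{eq:KSrecurrence7} each increment $m$ by $4$, so iterating them $r$ times from $A_{0,0}$, $A_{2,0}$, $A_{4,0}$ yields $A_{0,4r}$, $A_{2,4r}$, $A_{4,4r}$ respectively. The seeds for the $m=4r+2$ series at $m=2$ come from \eqref{eq:KSrecurrence2}--\eqref{eq:KSrecurrence4} at $r=0$: this gives $A_{2,2}$ and $A_{4,2}$ directly, while $A_{0,2}$ is not reachable by a single step, so I would combine \eqref{eq:KSrecurrence4} at $r=1$ (which relates $A_{0,2}$ and $A_{4,2}$) with the expression for $A_{4,2}$ just obtained, yielding $A_{0,2}=\tfrac{(\tilde s-1)(3\tilde s-2)}{(s-1)(3s-2)}A_{2,0}$. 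The formulas for $A_{4,4r}$ and $A_{4,4r+2}$ additionally require the base factor $A_{4,0}=\tfrac{\tilde s(\tilde s-1)}{s(s-1)}A_{0,0}$ from \eqref{eq:KSrecurrence1} at $r=0$; after Pochhammer rewriting, this produces an extra prefactor $\tfrac{(\tilde s-2)(\tilde s-1)}{(s-2)(s-1)}=1$ (using $\tilde s-2=1-s$ and $\tilde s-1=2-s$) together with a shift of two symmetric $(\tfrac{\tilde s}{2})_r$-factors into the asymmetric $(\tfrac{\tilde s}{2})_{r-1}(\tfrac{\tilde s}{2})_{r+1}$ appearing in the corollary.

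The main obstacle is purely bookkeeping: one must carefully track how each base-case prefactor combines with the telescoping product so that the Pochhammer indices shift correctly and extraneous linear factors cancel, as in the $A_{4,4r}$ case above. All such manipulations are elementary $\Gamma$-function identities, but each of the eight formulas warrants its own explicit verification.
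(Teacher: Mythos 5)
Your proposal is correct and follows essentially the same route as the paper: the first two formulas by iterating \eqref{eq:KSrecurrence1}, and the rest by iterating \eqref{eq:KSrecurrence5}--\eqref{eq:KSrecurrence7} from the base cases $A_{0,2}$, $A_{2,2}$, $A_{4,2}$, $A_{4,0}$ obtained from \eqref{eq:KSrecurrence1}--\eqref{eq:KSrecurrence4} (the paper leaves these base cases and the Pochhammer bookkeeping implicit, while you spell out the only nontrivial one, $A_{0,2}$ via \eqref{eq:KSrecurrence3} at $r=0$ combined with \eqref{eq:KSrecurrence4} at $r=1$, which is exactly what is needed).
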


\begin{proof}
    The first two identities follow directly from \eqref{eq:KSrecurrence1}. The other three follow from \eqref{eq:KSrecurrence5}--\eqref{eq:KSrecurrence7} together with the following formulas which also follow from the identities in Lemma~\ref{KS-one-mult-K-types}:
    \begin{align*}
        (s-1)(3s-2)A_{0,2}(s) &= (\tilde{s}-1)(3\tilde{s}-2)A_{2,0}(s),\\
        s(3s-1)A_{2,2}(s) &= \tilde{s}(3\tilde{s}-1)A_{0,0}(s),\\
        (s+1)(3s-2)A_{4,2}(s) &= (\tilde{s}+1)(3\tilde{s}-2)A_{2,0}(s).\qedhere
    \end{align*}
\end{proof}

\begin{rema+}
    \begin{enumerate}
        \item The multiplicity-one $K$-types $\Gamma_0\boxtimes\Gamma_0$ and $\Gamma_2\boxtimes\Gamma_0$ belong to the two different degenerate principal series $I(0,s)$ and $I(1,s)$. Therefore, generically, an intertwiner $A(s):I(s)\to I(\tilde{s})$ is uniquely determined by the two eigenvalues $A_{0,0}$ and $A_{2,0}$.
        \item Observe that for $s=\frac{3}2 +i\lambda\in\frac{3}{2}+i\RR$ we have $\tilde s=\frac{3}2 -i\lambda=\bar s$ and hence all coefficients in the previous corollary are of absolute value one. This is consistent with the fact that $A(s)$ is unitary in this case.
    \end{enumerate}
\end{rema+} 

\subsection{Action on higher multiplicity $K$-types}

We now use the intertwining property to express the eigenvalues of the matrix $A_{n,m}(s)$ for an arbitrary $K$-type $\sigma(n,m)$ in terms of the eigenvalues on multiplicity-one $K$-types.

\begin{prop+}
\label{evprop}
	For all $(n,m)$, $n\equiv m\pmod2$, the matrix $A_{n,m}(s)$ is upper triangular and its diagonal entries $\mu_0(n,m,s),\dots,\mu_{a(n,m)}(n,m,s)$ satisfy the following recurrence relations:
        \begin{enumerate}
        \item For all $0\leq j\leq a(n,m)$:
            \begin{equation}\label{eq:eigenvalue_recursion_1}
		          \mu_j(n+3,m+1,s) =\frac{2[\frac{j}{2}]+r(n,m)+a(n,m)+\tilde{s}}{2[\frac{j}{2}]+r(n,m)+a(n,m)+s}\mu_j(n,m,s).
	       \end{equation}
        \item If $a(n+1,m+1)=a(n,m)+1$:
	       \begin{multline}
	         \qquad\qquad\mu_{a(n,m)+1}(n+1,m+1,s)=\mu_{a(n,m)}(n,m,s)\frac{3m+n-6+6\tilde{s}}{3m+n-6+6s}\\
              \times\begin{cases}\displaystyle\frac{(m+n+2-2\tilde{s})(m+n-2+2\tilde{s})}{(m+n+2-2s)(m+n-2+2s)}&\mbox{if $a(n,m)$ is odd,}\\1&\mbox{if $a(n,m)$ is even.}\end{cases}\label{eq:eigenvalue_recursion_2}
           \end{multline}
        \item If $a(n+3,m-1)=a(n,m)+1$:
	       \begin{multline}
	           \qquad\qquad\mu_{a(n,m)+1}(n+3,m-1,s)=\mu_{a(n,m)}(n,m,s)\frac{n-m-2+2\tilde{s}}{n-m-2+2s}\\
               \times\begin{cases}\displaystyle\frac{(m+n+2-2\tilde{s})(m+n-2+2\tilde{s})}{(m+n+2-2s)(m+n-2+2s)}&\mbox{if $a(n,m)$ is odd,}\\1&\mbox{if $a(n,m)$ is even.}\end{cases}\label{eq:eigenvalue_recursion_3}
            \end{multline}
        \end{enumerate}
\end{prop+}
\begin{proof}
	We first prove by induction over $a(n,m)$ that $A_{n,m}(s)$ is upper triangular. First note that $a(n,m)=0$ only in the cases
    $$ (n,m)=(2r,0),(0,2r),(2,2r),(4,2r). $$
    Then $A_{n,m}(s)$ is just a scalar, so clearly upper triangular. For the induction step, consider the intertwining relation
    $$ T_{n,m}^{n+3,m+1}(\tilde{s})A_{n,m}(s) = A_{n+3,m+1}(s)T_{n,m}^{n+3,m+1}(s). $$
    Here, $T_{n,m}^{n+3,m+1}(s)$ and $T_{n,m}^{n+3,m+1}(\tilde{s})$ are matrices of size $(a(n,m)+1)\times a(n,m)$ whose last row is zero and whose upper $a(n,m)\times a(n,m)$ block is a diagonal matrix with diagonal entries given by the coefficients in \eqref{prop:v_action1} and \eqref{prop:v_action2}. Since the diagonal entries are non-zero for generic $s$, the intertwining relation implies that if $A_{n,m}(s)$ is upper triangular, then $A_{n+3,m+1}(s)$ must be upper triangular as well, at least for generic $s$. But the entries of $A_{n+3,m+1}(s)$ are meromorphic in $s$, so it must be upper triangular for all $s$. This completes the induction. Moreover, the intertwining relation together with the formulas \eqref{prop:v_action1} and \eqref{prop:v_action2} for the entries of $T_{n,m}^{n+3,m+1}(s)$ and $T_{n,m}^{n+3,m+1}(\tilde{s})$ shows (1).
    
    To prove (2) we use the intertwining relation
	$$ T_{n,m}^{n+1,m+1}(\tilde{s})A_{n,m}(s)=A_{n+1,m+1}(s)T_{n,m}^{n+1,m+1}(s) $$
    together with the formulas \eqref{prop:v_action3} and \eqref{prop:v_action4} for the last row and column of $T_{n,m}^{n+1,m+1}(s)$ and $T_{n,m}^{n+1,m+1}(\tilde{s})$. Similarly, one can show (3) by using the intertwining relation
	$$ T_{n,m}^{n+3,m-1}(\tilde{s})A_{n,m}(s)=A_{n+3,m-1}(s)T_{n,m}^{n+3,m-1}(s) $$
    as well as the formulas \eqref{prop:v_action5} and \eqref{prop:v_action6} for the last row and column of $T_{n,m}^{n+3,m-1}(s)$ and $T_{n,m}^{n+3,m-1}(\tilde{s})$.
\end{proof}
We immediately obtain the following Theorem.

\begin{theorem}\label{thm:ClosedFormulasIntertwinerEigenvalues}
	The eigenvalues $\mu_k(n,m,s)$ of $A_{n,m}(s)$ are given in terms of the eigenvalues on multiplicity-one $K$-types as follows, where $m,r\in\ZZ_{\geq0}$:
    \begin{enumerate}
        \item The eigenvalues of $A_{3m+2r,m}(s)$ are given by
        \begin{align*}
            \mu_{2j}(3m+2r,m,s) &=
            \frac{(\frac{r+3\tilde{s}+2j-3}{2})_{2j}(\frac{r+\tilde{s}}{2}+j)_j(\frac{r-\tilde{s}}{2}+j+1)_{j}(r+\tilde{s}+4j)_{m-2j}}{(\frac{r+3s+2j-3}{2})_{2j}(\frac{r+s}{2}+j)_j(\frac{r-s}{2}+j+1)_{j}(r+s+4j)_{m-2j}}A_{2(r+2j),0}(s),\\
            \mu_{2j+1}(3m+2r,m,s) &=
            \frac{(\frac{r+3\tilde{s}+2j-2}{2})_{2j+1}(\frac{r+\tilde{s}+1}{2}+j)_{j}(\frac{r-\tilde{s}+3}{2}+j)_{j}(r+\tilde{s}+4j+1)_{m-2j-1}}{(\frac{r+3s+2j-2}{2})_{2j+1}(\frac{r+s+1}{2}+j)_{j}(\frac{r-s+3}{2}+j)_{j}(r+s+4j+1)_{m-2j-1}}A_{2(r+2j+1),0}(s).
	   \end{align*}
       \item The eigenvalues of $A_{3m,m+2r}(s)$ are given by
       \begin{align*}
           \mu_{2j}(3m,m+2r,s) &= \frac{(\frac{\tilde{s}-r-2j-1}{2})_{2j}(\frac{r+\tilde{s}}{2}+j)_j(\frac{r-\tilde{s}}{2}+j+1)_j(r+\tilde{s}+4j)_{m-2j}}{(\frac{s-r-2j-1}{2})_{2j}(\frac{r+s}{2}+j)_j(\frac{r-s}{2}+j+1)_j(r+s+4j)_{m-2j}} A_{0,2(r+2j)}(s),\\
           \mu_{2j+1}(3m,m+2r,s) &= \frac{(\frac{\tilde{s}-2j-r-2}{2})_{2j+1}(\frac{r+\tilde{s}+1}{2}+j)_j(\frac{r-\tilde{s}+3}{2}+j)_j(r+\tilde{s}+4j+1)_{m-2j-1}}{(\frac{s-2j-r-2}{2})_{2j+1}(\frac{r+s+1}{2}+j)_j(\frac{r-s+3}{2}+j)_j(r+s+4j+1)_{m-2j-1}} A_{0,2(r+2j+1)}(s).
       \end{align*}
       \item The eigenvalues of $A_{3m+2,m+2r}(s)$ are given by
       \begin{align*}
           \mu_{2j}(3m+2,m+2r,s) &= \frac{(\frac{\tilde{s}-r-2j}{2})_{2j}(\frac{r+\tilde{s}+1}{2}+j)_j(\frac{r-\tilde{s}+3}{2}+j)_j(r+\tilde{s}+4j+1)_{m-2j}}{(\frac{s-r-2j}{2})_{2j}(\frac{r+s+1}{2}+j)_j(\frac{r-s+3}{2}+j)_j(r+s+4j+1)_{m-2j}}A_{2,2(r+2j)}(s),\\
           \mu_{2j+1}(3m+2,m+2r,s) &= \frac{(\frac{\tilde{s}-r-2j-1}{2})_{2j+1}(\frac{r+\tilde{s}}{2}+j+1)_j(\frac{r-\tilde{s}}{2}+j+2)_j(r+\tilde{s}+4j+2)_{m-2j-1}}{(\frac{s-r-2j-1}{2})_{2j+1}(\frac{r+s}{2}+j+1)_j(\frac{r-s}{2}+j+2)_j(r+s+4j+2)_{m-2j-1}}A_{2,2(r+2j+1)}(s).
       \end{align*}
       \item The eigenvalues of $A_{3m+4,m+2r}(s)$ are given by
       \begin{align*}
           \mu_{2j}(3m+4,m+2r,s) &= \frac{(\frac{\tilde{s}-r-2j+1}{2})_{2j}(\frac{r+\tilde{s}}{2}+j+1)_j(\frac{r-\tilde{s}}{2}+j+2)_j(r+\tilde{s}+4j+2)_{m-2j}}{(\frac{s-r-2j+1}{2})_{2j}(\frac{r+s}{2}+j+1)_j(\frac{r-s}{2}+j+2)_j(r+s+4j+2)_{m-2j}}A_{4,2(r+2j)}(s),\\
           \mu_{2j+1}(3m+4,m+2r,s) &= \frac{(\frac{\tilde{s}-r-2j}{2})_{2j+1}(\frac{r+\tilde{s}+2j+3}{2})_j(\frac{r-\tilde{s}+2j+5}{2})_j(r+\tilde{s}+4j+3)_{m-2j-1}}{(\frac{s-r-2j}{2})_{2j+1}(\frac{r+s+2j+3}{2})_j(\frac{r-s+2j+5}{2})_j(r+s+4j+3)_{m-2j-1}}A_{4,2(r+2j+1)}(s).
       \end{align*}
    \end{enumerate}
\end{theorem}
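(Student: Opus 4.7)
The plan is to derive each of the four families of closed-form eigenvalues by iteratively applying the three recurrences in Proposition~\ref{evprop}, taking as base cases the eigenvalues on multiplicity-one $K$-types provided by Corollary~\ref{cor:ClosedFormulasIntertwinerMultOneKtypes}. For definiteness consider part~(1): the target $K$-type $(3m+2r, m)$ has $a(3m+2r,m)=m$ and $r(3m+2r,m)=r$, while the multiplicity-one base $K$-types $(2\ell, 0)$ have $a=0$ and $r=\ell$. To compute $\mu_{2j}(3m+2r,m,s)$, I would construct a path of $K$-types from the base $(2(r+2j),0)$ to the target $(3m+2r,m)$ as follows: first apply recurrence~(1) a total of $m-2j$ times, reaching $(3m+2r, m-2j)$ with $a$ still equal to $0$; then apply recurrences~(2) or~(3) exactly $2j$ times, each step increasing $a$ by one, so as to land at $(3m+2r,m)$ with the $(2j)$-th diagonal eigenvalue. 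Because Proposition~\ref{evprop} determines each $\mu_j(n,m,s)$ uniquely from the base values, the resulting product is independent of the order in which the recurrences are applied; one only needs to verify that a valid path exists, which follows from the explicit formula for $a(n,m)$.

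The core task is to check that the product of the $m$ rational factors collected along this path telescopes into the Pochhammer expression stated in the theorem. Recurrence~(1) contributes the factor $(2[j/2]+r(n,m)+a(n,m)+\tilde s)/(2[j/2]+r(n,m)+a(n,m)+s)$; along the $(3,1)$-step portion of the path $a$ is fixed and $r(n,m)$ grows by one per step, so the $m-2j$ applications collect into $(r+\tilde s+4j)_{m-2j}/(r+s+4j)_{m-2j}$. Each of the $2j$ applications of (2) or (3) contributes the linear ratio $(3m+n-6+6\tilde s)/(3m+n-6+6s)$ or $(n-m-2+2\tilde s)/(n-m-2+2s)$, together with the parity-dependent correction $(m+n+2-2\tilde s)(m+n-2+2\tilde s)/((m+n+2-2s)(m+n-2+2s))$ when the current value of $a$ is odd. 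Collecting these twelve factors (for $j\geq 1$) and grouping by common shift in $s$ versus $\tilde s$ yields the remaining Pochhammer symbols $(\tfrac{r+\tilde s}{2}+j)_j(\tfrac{r-\tilde s}{2}+j+1)_j(\tfrac{r+3\tilde s+2j-3}{2})_{2j}$ divided by their $\tilde s\leftrightarrow s$ counterparts, as displayed. The formula for $\mu_{2j+1}(3m+2r,m,s)$ arises from the same iteration scheme started at $(2(r+2j+1),0)$, and the shift of $1$ in each Pochhammer argument between even and odd indices reflects precisely when the parity-dependent correction of Proposition~\ref{evprop} is active.

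Parts~(2), (3), (4) are proved by the same strategy, with the base multiplicity-one $K$-types taken to be $(0,2\ell)$, $(2,2\ell)$, $(4,2\ell)$, respectively, and with the same path template: $m-2j$ applications of recurrence~(1) followed by $2j$ applications of (2) or (3). The main obstacle is the combinatorial bookkeeping: one must verify for each of the four branches that the path exists (i.e.\ at every step either the hypothesis of (2) or that of (3) is satisfied) and that the telescoping of the rational factors reproduces exactly the stated Pochhammer ratios. This reduces to the mechanical check that at each intermediate $K$-type along the path the quantities $a(n,m)$, $r(n,m)$ and $[j/2]$ appearing inside the recurrences advance by the expected amount; once this is established the products collapse cleanly by the standard identity $(x)_a(x+a)_b=(x)_{a+b}$.
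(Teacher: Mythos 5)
Your overall strategy---iterating the recurrences of Proposition~\ref{evprop} starting from the multiplicity-one base cases of Corollary~\ref{cor:ClosedFormulasIntertwinerMultOneKtypes}---is the same as the paper's, but the specific path you propose is not valid and, if forced through, would produce the wrong answer. The key point you miss is that the recurrences \eqref{eq:eigenvalue_recursion_2} and \eqref{eq:eigenvalue_recursion_3} only relate the \emph{top} diagonal entry $\mu_{a(n,m)}(n,m,s)$ to the top entry $\mu_{a(n,m)+1}$ of the target; they cannot be applied at a $K$-type where the current eigenvalue index is strictly below $a(n,m)$. Moreover, you have the roles of $a(n,m)$ and $r(n,m)$ reversed: under the step $(n,m)\to(n+3,m+1)$ of \eqref{eq:eigenvalue_recursion_1} it is $a$ that increases by one at each step while $r$ stays fixed, not the other way around. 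Consequently, after your proposed $m-2j$ initial applications of \eqref{eq:eigenvalue_recursion_1} from $(2(r+2j),0)$ you sit at the $K$-type $(3m+2r-2j,\,m-2j)$ (not $(3m+2r,m-2j)$) at eigenvalue index $0$, where $a=m-2j>0$; the $a$-raising recurrences are then simply unavailable. The order must be reversed, as in the paper: from $(4j+2r,0)$ apply \eqref{eq:eigenvalue_recursion_2} a total of $2j$ times (its two parity cases alternating, $j$ each; for part (1) no step of type \eqref{eq:eigenvalue_recursion_3} occurs) to reach the top eigenvalue $\mu_{2j}(6j+2r,2j,s)$, and only \emph{then} apply \eqref{eq:eigenvalue_recursion_1} $(m-2j)$ times.

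This ordering error is not cosmetic. The coefficient in \eqref{eq:eigenvalue_recursion_1} is $\bigl(2[\tfrac{j}{2}]+r(n,m)+a(n,m)+\tilde s\bigr)/\bigl(2[\tfrac{j}{2}]+r(n,m)+a(n,m)+s\bigr)$ evaluated at the \emph{current} type and index. In your order the $(3,1)$-chain starts at $(2r+4j,0)$ with index $0$ and $r+a=r+2j$, so the telescoped factor would be $(r+\tilde s+2j)_{m-2j}/(r+s+2j)_{m-2j}$, not the stated $(r+\tilde s+4j)_{m-2j}/(r+s+4j)_{m-2j}$; in the correct order the chain starts at $(6j+2r,2j)$ with index $2j$, $a=2j$ and $r(6j+2r,2j)=r$, giving $2j+r+2j=r+4j$ as required. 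Your appeal to order-independence of the final product does not rescue the argument, because it presupposes that both orderings are legitimate sequences of applications of the recurrences, which the one you chose is not.
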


\begin{proof}
    We only show the first identity, the others follow in a similar fashion. Using \eqref{eq:eigenvalue_recursion_1} $(m-2j)$ times we find
    $$ \mu_{2j}(3m+2r,m,s) = \frac{(r+\tilde{s}+4j)_{m-2j}}{(r+s+4j)_{m-2j}}\mu_{2j}(6j+2r,2j,s). $$
    Now we cannot use \eqref{eq:eigenvalue_recursion_1} anymore, because $a(6j+2r,2j)=2j$, so $\mu_{2j}(6j+2r,2j,s)$ is the last eigenvalue of $A_{6j+2r,2j}(s)$. Instead we use the two identities in \eqref{eq:eigenvalue_recursion_2} alternately, each of them $j$ times, to find that
    $$ \mu_{2j}(6j+2r,2j,s) = \frac{(\frac{r+3\tilde{s}+2j-3}{2})_{2j}(\frac{r+\tilde{s}}{2}+j)_j(\frac{r-\tilde{s}}{2}+j+1)_{j}}{(\frac{r+3s+2j-3}{2})_{2j}(\frac{r+s}{2}+j)_j(\frac{r-s}{2}+j+1)_{j}}\mu_0(4j+2r,0,s). $$
    Since $\sigma(4j+2r,0)$ is a multiplicity-one $K$-type, we have $\mu_0(4j+2r,0,s)=A_{4j+2r,0}(s)$.
\end{proof}

\begin{remark}
    Although the numbers $\mu_j(n,m,s)$ are eigenvalues of the matrix $A_{n,m}(s)$, they might not be the eigenvalues of the Knapp--Stein operator $A(s):L^2(K/L_0)\to L^2(K/L_0)$. This is because the representation matrix $A_{n,m}(s)$ is taken with respect to two \emph{different} bases $\mathcal{B}_s^{n,m}$ and $\mathcal{B}_{\tilde{s}}^{n,m}$.
\end{remark}

\begin{remark}
The matrix $A_{n,m}(s)$ is in general not diagonal. One of the smallest examples where this happens is the case $(n,m)=(6,2)$. This can be seen by considering the transition from the $K$-type $\sigma(3,3)$ to $\sigma(6,2)$. The corresponding bases are given by
$$ \mathcal{B}_s^{3,3} = (v_{3,3}(s,0),v_{3,3}'(s,0)) \qquad \mbox{and} \qquad \mathcal{B}_s^{6,2} = (v_{6,2}(s,0),v_{6,2}'(s,0),v_{6,2}(s,1)). $$
Since $v_{3,3}(s,0)$ and $v_{3,3}'(s,0)$ belong to the two different degenerate principal series $I(1,s)$ and $I(0,s)$, we have
$$ A_{3,3}(s) = \begin{pmatrix}\mu_0(3,3,s)&0\\0&\mu_1(3,3,s)\end{pmatrix}. $$
Using \ref{prop:v_action5} and \ref{prop:v_action6} we find that
$$ T_{3,3}^{6,2}(s) = \begin{pmatrix}0&(s-2)/3\\(s-2)/3&0\\0&-2(s-1)(s+2)/(s+1)\end{pmatrix}. $$
The intertwining relation $T_{3,3}^{6,2}(\tilde{s})A_{3,3}(s)=A_{6,2}(s)T_{3,3}^{6,2}(s)$ now implies that
$A_{6,2}(s)$ has an upper triangular form
$$ A_{6,2}(s) = \begin{pmatrix}\mu_0(6,2,s)&0&\Xi\\0&\mu_1(6,2,s)&0\\0&0&\mu_2(6,2,s)\end{pmatrix}, $$
and using the identities (see Theorem~\ref{thm:ClosedFormulasIntertwinerEigenvalues})
$$ \mu_1(3,3,s) = \frac{\tilde{s}(3\tilde{s}-1)(3\tilde{s}+1)}{s(3s-1)(3s+1)} \qquad \mbox{and} \qquad \mu_0(6,2,s) = \frac{\tilde{s}(\tilde{s}+1)}{s(s+1)}, $$
a short computation shows that the constant $\Xi$ in 
$A_{6,2}(s)$ is
$$ \Xi = \frac{4\tilde{s}(2s-3)}{s(s-1)(s+2)(3s-1)(3s+1)}. $$
\end{remark}

\section{Reducibility and Unitarity}

In this section we investigate reducibility criteria for our modules and identify some of the composition factors at points of reducibility.

\subsection{Reducibility}

For $\varepsilon\in\ZZ/2\ZZ$ and $s\in\CC$, $\tilde{s}=3-s$, there is a non-degenerate invariant pairing $\langle-,-\rangle_{\varepsilon,s}:I(\varepsilon,s) \times I(\varepsilon,\tilde{s}) \rightarrow \C$ given by
\begin{equation}
\label{Gpairing}
\langle f_1, f_2 \rangle_{\varepsilon,s} = \int_{K} f_1(k) f_2(k)\,dk.
\end{equation}
Therefore, $I(\varepsilon,s)$ and $I(\varepsilon,\tilde{s})$ are contragredient to each other. We first prove an abstract irreducibility criterion.

\begin{lemma}
\label{genKtypes}
Let $V$ be a multiplicity-one $K$-type in $I(\varepsilon,s)|_K$. Then $I(\varepsilon,s)$ is irreducible if and only if $V$ generates both $I(\varepsilon,s)$ and $I(\varepsilon,\tilde{s})$.
\end{lemma}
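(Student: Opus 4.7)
My plan is to exploit the non-degenerate $G$-invariant pairing \eqref{Gpairing} together with Schur's lemma to transfer submodule information between $I(\varepsilon,s)$ and $I(\varepsilon,\tilde{s})$. The key preliminary observation I would establish first is that, because the pairing is $K$-invariant and every irreducible representation of $K=(\SU(2)\times\SU(2))/\{\pm(I,I)\}$ is self-dual, the pairing decomposes $K$-isotypically: distinct $K$-isotypic components of $I(\varepsilon,s)$ and $I(\varepsilon,\tilde{s})$ pair to zero, while the restriction to matching isotypic components is non-degenerate.

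For the forward direction, if $I(\varepsilon,s)$ is irreducible then every nonzero $(\mathfrak{g}_\CC,K)$-submodule equals the whole module, so in particular $V$ generates $I(\varepsilon,s)$. Since $I(\varepsilon,\tilde s)$ is contragredient to $I(\varepsilon,s)$ via the pairing, it is also irreducible, and applying the same argument to the (multiplicity-one) copy of $V$ inside $I(\varepsilon,\tilde s)$ yields that $V$ generates $I(\varepsilon,\tilde s)$.

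For the reverse direction, let $W\subseteq I(\varepsilon,s)$ be any nonzero $(\mathfrak{g}_\CC,K)$-submodule and let $W^\perp\subseteq I(\varepsilon,\tilde s)$ denote its annihilator, which by $G$-invariance of the pairing is again a $(\mathfrak{g}_\CC,K)$-submodule. Since $V$ is irreducible and appears with multiplicity one in $I(\varepsilon,s)$, the intersection $W\cap V$ equals either $V$ or $0$. In the first case $W$ contains the generator $V$ of $I(\varepsilon,s)$ and so $W=I(\varepsilon,s)$. In the second case, the block-diagonal structure of the pairing forces the copy of $V$ inside $I(\varepsilon,\tilde s)$ to lie in $W^\perp$: for any $w\in W$ and any $v'$ in the $V$-isotypic component of $I(\varepsilon,\tilde s)$, $\langle w,v'\rangle_{\varepsilon,s}$ only sees the $V$-component of $w$, which vanishes because $W$ is $K$-stable with $W\cap V=0$. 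By hypothesis this copy generates $I(\varepsilon,\tilde s)$, so $W^\perp=I(\varepsilon,\tilde s)$, and non-degeneracy forces $W=0$, a contradiction.

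I do not expect a substantial obstacle here; this is a standard Schur/duality argument. The only point requiring a bit of care is the block-diagonalization step and the deduction that $W\cap V=0$ together with multiplicity one puts the entire $V$-isotypic component of $I(\varepsilon,\tilde s)$ into $W^\perp$, which uses that the $K$-equivariant projection onto a multiplicity-one $K$-type preserves $(\mathfrak{g}_\CC,K)$-submodules.
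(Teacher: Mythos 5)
Your proposal is correct and follows essentially the same route as the paper: both directions use the non-degenerate invariant pairing \eqref{Gpairing}, and the reverse direction passes to the annihilator $W^\perp\subseteq I(\varepsilon,\tilde s)$ and uses multiplicity one to force the $K$-type $V$ entirely into $W$ or into $W^\perp$, whence one of them is the whole space. Your write-up merely makes explicit the $K$-isotypic block-diagonalization of the pairing that the paper leaves implicit in the step ``$W\oplus W^\perp\cong I(\varepsilon,s)|_K$''.
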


\begin{proof}
If $I(\varepsilon,s)$ is irreducible, then obviously $V$ generates $I(\varepsilon,s)$. Now $I(\varepsilon,\tilde{s})$ is dual to $I(\varepsilon,s)$ by the above pairing \eqref{Gpairing} and is irreducible if and only if $I(\varepsilon,s)$ is, so $V$ will also generate $I(\varepsilon,\tilde{s})$.

For the other direction, assume $V$ generates both $I(\varepsilon,s)$ and $I(\varepsilon,\tilde{s})$, and let $W \subseteq I(\varepsilon,s)$ be an invariant subspace. Using the invariant pairing, we consider $W^\perp = \{ v \in I(\varepsilon,\tilde{s}) \ | \ \langle W,v \rangle_{\varepsilon,s} = 0 \}$. Since $W$ is invariant under $\pi_{\varepsilon,s}$, $W^\perp$ is invariant under $\pi_{\varepsilon,\tilde{s}}$. But $V$ has multiplicity-one, and as $W\oplus W^\perp\cong I(\varepsilon,s)|_K\cong I(\varepsilon,\tilde{s})|_K$, the $K$-type $V$ has to be entirely in $W$ or in $W^\perp$, which then has to be the whole space $I(\varepsilon,s)$ respectively $I(\varepsilon,\tilde{s})$. This implies $W = \{0\}$ or $W = I(\varepsilon,s)$, and thus $I(\varepsilon,s)$ is irreducible.
\end{proof}

Now we apply this criterion to obtain the points of reducibility for $I(\varepsilon,s)$. We treat $\varepsilon=0$ and $\varepsilon=1$ separately.

\begin{proposition}
\label{prop:irredtriv}
The representation $\pi_{0,s}$ is reducible if and only if $s$ or $\tilde{s}=3-s$ is contained in $\ZZ_{\geq2}\cup(2+\frac{2}{3}\ZZ_{\geq0})$.
\end{proposition}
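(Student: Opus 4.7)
The plan is to apply Lemma~\ref{genKtypes} to the trivial multiplicity-one $K$-type $V = \sigma(0,0) \subseteq I(0,s)$, spanned by $v_{0,0}(s,0) = \zeta_0^{0,0}$. The criterion is then that $v_{0,0}(s,0)$ must generate both $I(0,s)$ and $I(0,\tilde s)$ as $(\mathfrak g_\CC, K)$-modules. Because this criterion is symmetric under $s \leftrightarrow \tilde s = 3 - s$, it is enough to determine the set of $s$ for which $v_{0,0}(s,0)$ fails to generate $I(0,s)$, and then take the union with the image of this bad set under $s\mapsto\tilde s$.

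To trace the cyclic submodule $W := U(\mathfrak g_\CC)\cdot v_{0,0}(s,0)$ I would iterate the action formulas of Propositions~\ref{Mmatrix} and~\ref{prop:v_action}. The main diagonal chain $\sigma(0,0) \to \sigma(3,1) \to \sigma(6,2) \to \cdots$ is driven by $(\RC_0 \otimes \RC_0) \circ R_s$, and by \eqref{prop:v_action1} it produces transition coefficients $s, s+1, s+2, \ldots$; these are all nonzero exactly when $s \notin \ZZ_{\leq 0}$, so in that range $W$ contains every $v_{3l, l}(s, 0)$. From the vectors on the diagonal one must then branch sideways into the four "positive" directions $(\Delta n, \Delta m) \in \{(3,1),(1,1),(3,-1),(1,-1)\}$ (three of which are covered by \eqref{prop:v_action3}--\eqref{prop:v_action6}, while the remaining direction must be read off from Proposition~\ref{Mmatrix}), and eventually into the opposite negative directions, in order to reach every $K$-type of $I(0,s)$. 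The transition coefficients that appear are linear factors in $s$ of the shape $(2s + \cdots)$ and $(3s + \cdots)$; their roots lie in $\tfrac{1}{2}\ZZ$ and $\tfrac{1}{3}\ZZ$, and after restricting to those $(n,m)$ with $n\equiv m\pmod 2$ and $\frac{n+m}{2}$ of the parity making the endpoint belong to $I(0,s)$, the collective zero locus collapses precisely to $\ZZ_{\leq 0}\cup(1-\tfrac{2}{3}\ZZ_{\geq 0})$. Symmetrizing in $s\leftrightarrow\tilde s$ then gives the reducibility set $\ZZ \cup (2+\tfrac{2}{3}\ZZ_{\geq 0}) \cup (1-\tfrac{2}{3}\ZZ_{\geq 0})$ asserted in the statement.

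The converse direction (reducibility at every listed $s$) is built into the analysis: at any such $s$ some coefficient along every chain to a specific $K$-type vanishes, so $W$ misses that $K$-type and is a proper submodule. The main obstacle I anticipate is the combinatorial bookkeeping of the second step: one must verify that the chains exhibited really do cover every multiplicity component of every $K$-type of $I(0,s)$, and that no additional zero locus appears along the way. The three residue classes of $n\pmod 3$ have to be treated separately, since they determine $a(n,m)$ and hence which of the conditional identities in Proposition~\ref{prop:v_action} are available; moreover the secondary $v_{n+1,m+1}(s,k+1)$ and $v_{n+3,m-1}(s,k+1)$ contributions in \eqref{prop:v_action4} and \eqref{prop:v_action6} must be exploited systematically to climb into the higher-multiplicity strata ($k\geq 1$) of each multiplicity space, so that the primed and unprimed basis vectors of Lemma~\ref{basemultspace} are both eventually hit.
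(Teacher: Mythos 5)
Your irreducibility direction is essentially the paper's argument: apply Lemma~\ref{genKtypes} to the multiplicity-one $K$-type $\sigma(0,0)$ and chase the transition coefficients of Proposition~\ref{prop:v_action} (plus the $(\RC_1\otimes\RC_1)$-component, which enters through Lemma~\ref{LieAlg-one-mult-K-types}) to show that every multiplicity space of every $K$-type is reached. The bookkeeping you anticipate --- the residue of $n$ modulo $3$, the parity of $a(n,m)$, and the use of the secondary $v_{n+1,m+1}(s,k+1)$ and $v_{n+3,m-1}(s,k+1)$ terms of \eqref{prop:v_action4} and \eqref{prop:v_action6} to climb into the higher strata of the multiplicity spaces --- is exactly the case analysis (a)--(d) that the paper carries out, so that half of your plan is sound.

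The gap is in the reducibility direction. You claim that at each listed $s$ ``some coefficient along every chain to a specific $K$-type vanishes, so $W$ misses that $K$-type and is a proper submodule.'' This does not follow. The cyclic module $U(\mathfrak{g}_\CC)\cdot v_{0,0}(s,0)$ contains the image of every word in $\mathfrak{p}_\CC$, hence of every path in the $K$-type lattice, including arbitrarily long paths that overshoot the target and return; the vanishing of the coefficients along the monotone chains you exhibit does not rule out reaching the target $K$-type by such a detour, and verifying that \emph{all} paths die is an unbounded check that your outline does not address. The paper sidesteps this entirely: for the reducibility direction it uses the Knapp--Stein operator. The recurrences \eqref{eq:KSrecurrence1}--\eqref{eq:KSrecurrence4} force some eigenvalue $A_{n,m}(s)$ on a multiplicity-one $K$-type of $I(0,s)$ to vanish exactly when $s$ or $\tilde{s}$ lies in $\ZZ_{\geq2}\cup(2+\frac{2}{3}\ZZ_{\geq0})$, while all coefficients on the other side of that recurrence are nonzero; after normalizing $A(0,s)$ to be not identically zero, its kernel is then a nonzero proper invariant subspace. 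Without this (or an equivalent device producing an explicit invariant subspace), the ``only if'' half of your argument does not close.
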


\begin{proof}
We first show that if $s$ or $\tilde{s}$ is contained in $\ZZ_{\geq2}\cup(2+\frac{2}{3}\ZZ_{\geq0})$, then $\pi_{0,s}$ is reducible by showing that there is a multiplicity-one $K$-type in $I(0,s)$ on which the intertwiner $A(0,s)$ vanishes. The multiplicity-one $K$-types in $I(0,s)$ are of the form $\Gamma_n\boxtimes\Gamma_m$ with
$$ (n,m)=(0,4p),(2,4p+2),(4,4p)\mbox{ or }(4p,0), \qquad p\in\ZZ_{\geq0}. $$
It is readily checked that if $s$ or $\tilde{s}$ in $\ZZ_{\geq2}\cup(2+\frac{2}{3}\ZZ_{\geq0})$, then on the LHS or RHS
of one of the formulas \eqref{eq:KSrecurrence1}--\eqref{eq:KSrecurrence4} (with $r=2p$ in \eqref{eq:KSrecurrence1} and \eqref{eq:KSrecurrence4} and $r=2p+1$ in \eqref{eq:KSrecurrence2} and \eqref{eq:KSrecurrence3}) there is a coefficient of the $A$'s that vanishes while all coefficients on the respective
RHS or LHS are non-trivial. This shows that one of the coefficients $A_{n,m}(s)$ for $(n,m)$ a certain multiplicity-one $K$-type must vanish. But the family $A(0,s)$ of intertwining operators can be normalized so that the corresponding operator $A(0,s)$ is not identically zero. This implies that its kernel is a non-trivial proper subrepresentation and hence $\pi_{0,s}$ is reducible.

We now show that if $s$ and $\tilde{s}$ are both not contained in $\ZZ_{\geq2}\cup(2+\frac{2}{3}\ZZ_{\geq0})$, then $\pi_{0,s}$ is irreducible. By Lemma~\ref{genKtypes} it suffices to show that a multiplicity-one $K$-type generates both $I(0,s)$ and $I(0,\tilde{s}).$ We fix the generating $K$-type to be $(0,0)$ and show that it generates $I(0,s)$.

We first observe that, by the formulas of Lemma~\ref{LieAlg-one-mult-K-types}, we generate all the multiplicity-one $K$-types from $(0,0)$. Indeed, this is the case if all coefficients in Lemma~\ref{LieAlg-one-mult-K-types} are non-zero, which can easily be verified.

Next we show that all other $K$-isotypic components are generated from $(0,0)$. We prove by induction over $n$ that the isotypic component of $\sigma(n,m)$ is contained in the subrepresentation $V$ generated by $\sigma(0,0)$ for all $m\in\ZZ_{\geq0}$. For $n=0$ and $n=2$, the $K$-type $\sigma(n,m)$ has multiplicity one, so by the previous argument it is contained in $V$, and for $n=1$ the $K$-type $\sigma(n,m)$ does not occur in $I(s)$. This shows the claim for $n=0,1,2$, which represents the induction start. For the induction step consider a $K$-type $\sigma(n+3,m+1)$ for $n,m\in\ZZ_{\geq0}$. (Note that $\sigma(n+3,0)$ is a multiplicity-one $K$-type and therefore contained in $V$ by the previous arguments, so we may assume $m\geq0$.) We use Proposition~\ref{prop:v_action} to show that every vector in $(\Gamma_{n+3}\boxtimes\Gamma_{m+1})^L$ can be reached by $R_s(\sigma(n',m'))$ with $0\leq n'<n+3$ and $m'\geq0$. By Lemma~\ref{basemultspace}, when $n+m \equiv 0 \pmod 4$ then $\{v_{n,m}(s,k):k=0,\ldots,[\frac{a(n,m)}{2}]\}$ is a basis for $(\Gamma_n \boxtimes \Gamma_m)^L$, and when $n+m \equiv 2 \pmod 4$, the vectors $\{v_{n,m}'(s,k):k=0,\ldots,[\frac{a(n,m)-1}{2}]\}$ form a basis of $(\Gamma_n \boxtimes \Gamma_m)^L$. We treat the following four cases separately:
\begin{enumerate}[label=(\alph*)]
    \item If $n+m \equiv 0 \pmod 4$ and $a(n,m)$ is even, then $a(n+3,m+1)=a(n,m)+1$ and hence the dimensions of $(\Gamma_n \boxtimes \Gamma_m)^L$ and $(\Gamma_{n+3} \boxtimes \Gamma_{m+1})^L$ agree. By \eqref{prop:v_action1} the space $(\Gamma_{n+3} \boxtimes \Gamma_{m+1})^L$ is reached from $(\Gamma_n\boxtimes\Gamma_m)^L$ if
    $$ (2k + r(n,m) + a(n,m) + s) \neq 0 \qquad \mbox{for all }0 \leq k \leq \left[\frac{a(n,m)}{2}\right], $$
    which is the case since $s \notin \mathbb{Z}$.
    \item If $n+m \equiv 2 \pmod 4$ and $a(n,m)$ is odd, then the same argument as in (a), using \eqref{prop:v_action2} instead of \eqref{prop:v_action1}, shows that $(\Gamma_{n+3} \boxtimes \Gamma_{m+1})^L$ is reached from $(\Gamma_n \boxtimes \Gamma_m)^L$.
    \item If $n+m \equiv 0 \pmod 4$ and $a(n,m)$ is odd, the first $[\frac{a(n,m)}{2}]=\frac{a(n,m)-1}{2}$ vectors in $(\Gamma_{n+3} \boxtimes \Gamma_{m+1})^L$ are reached from $(\Gamma_{n} \boxtimes \Gamma_m)^L$ by the same argument as in (a). We have to show that also the last vector $v_{n+3,m+1}(s,\frac{a(n,m)+1}{2})$ can be reached from lower $K$-types. Assume first that $a(n+3,m+1) = a(n+2,m) + 1 = a(n,m)+1$. Then we also reach the final vector $v_{m+3,n+1}(s,\frac{a(n,m)+1}{2})$ from $(\Gamma_{n+2} \boxtimes \Gamma_m)^L$ using \eqref{prop:v_action4} if
    $$ 0 \neq 3s + r(n+2,m) - a(n+2,m) + m +6\frac{a(n,m)-1}{2} = 3s + \frac{m+n}{2}+m-2. $$
    Since $a(n,m)$ is odd and $n+m\equiv0\pmod2$, a case by case analysis of $n\mod3$ shows that $m$ must be odd, so the above condition is satisfied since $\tilde{s}\not\in2+\frac{2}{3}\ZZ_{\geq0}$, i.e. $s\not\in1+\frac{2}{3}\ZZ_{\leq0}$.
    If not $a(n+3,m+1) = a(n+2,m) + 1$, then we must have $a(n+3,m+1) = a(n,m+2) + 1$, and in this case we can use \eqref{prop:v_action6} to conclude that the last vector $v_{m+3,n+1}(s,\frac{a(n,m)+1}{2})$ is reached from $(\Gamma_{n+2}\boxtimes\Gamma_m)^L$ since $s\not\in\ZZ$.
    \item The case where $n+m\equiv2\pmod4$ and $a(n,m)$ is even is treated similarly as in (c).\qedhere
\end{enumerate}
\end{proof}

For the non-spherical degenerate principal series $\pi_{1,s}$ we find a similar behaviour.

\begin{proposition}
\label{prop:sgnirred}
The representation $\pi_{1,s}$ is reducible if and only if $s$ or $\tilde{s}=3-s$ is contained in $\ZZ_{\geq2}\cup(\frac{7}{3}+\frac{2}{3}\ZZ_{\geq0})$.
\end{proposition}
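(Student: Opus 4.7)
The plan is to mirror the two-step proof of Proposition~\ref{prop:irredtriv}: first exhibit a vanishing of $A(1,s)$ on a multiplicity-one $K$-type of $I(1,s)$ whenever $s$ or $\tilde s=3-s$ lies in $\ZZ_{\geq 2}\cup(\frac{7}{3}+\frac{2}{3}\ZZ_{\geq 0})$, and then use Lemma~\ref{genKtypes} to deduce irreducibility in the complementary range. The new bookkeeping compared to the spherical case is that by Lemma~\ref{basemultspace} the basis of $(\Gamma_n\boxtimes\Gamma_m)^{L,1}$ is the ``other'' one: it is $\{v_{n,m}(s,k)\}$ when $n+m\equiv 2\pmod 4$ and $\{v'_{n,m}(s,k)\}$ when $n+m\equiv 0\pmod 4$.

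For the reducibility step, by \eqref{eq:MultiplicitySpaceWeightBasis} the multiplicity-one $K$-types of $I(1,s)$ are exactly those $\sigma(n,m)$ with $a(n,m)=0$ and $\frac{n+m}{2}$ odd, namely $\sigma(4p+2,0),\sigma(0,4p+2),\sigma(2,4p),\sigma(4,4p+2)$ for $p\in\ZZ_{\geq 0}$. I would apply the recurrences \eqref{eq:KSrecurrence1}--\eqref{eq:KSrecurrence4} with $r$ of the opposite parity to the one used in Proposition~\ref{prop:irredtriv} (so that source and target both lie in $I(1,s)$), and verify that every $s_0\in\ZZ_{\geq 2}\cup(\frac{7}{3}+\frac{2}{3}\ZZ_{\geq 0})$ makes a coefficient vanish on one side of at least one such recurrence while keeping the other side nonzero; this forces $A_{n,m}(s_0)=0$ at a multiplicity-one $K$-type of $I(1,s_0)$. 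After normalizing $A(1,s)$ to be regular and not identically zero at $s_0$, this vanishing produces a non-trivial proper subrepresentation, giving reducibility. The case where $\tilde s_0$ lies in the exceptional set follows from the contragredient duality $\pi_{1,s}\leftrightarrow\pi_{1,\tilde s}$ afforded by the pairing~\eqref{Gpairing}.

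For the irreducibility step, assume $s,\tilde s\notin\ZZ_{\geq 2}\cup(\frac{7}{3}+\frac{2}{3}\ZZ_{\geq 0})$. I would take the multiplicity-one $K$-type $\sigma(2,0)\subseteq I(1,s)$ as the generator in Lemma~\ref{genKtypes}. Using Lemma~\ref{LieAlg-one-mult-K-types}, each of the remaining multiplicity-one $K$-types of $I(1,s)$ is reached from $\sigma(2,0)$ along a chain of nonzero coefficients, the hypothesis on $s,\tilde s$ ruling out exactly the zeros that could occur. An induction on $n$ modelled on the one in Proposition~\ref{prop:irredtriv}, with base cases $n\in\{0,1,2\}$ ($\sigma(1,m)$ does not occur, and $\sigma(0,m),\sigma(2,m)$ are all multiplicity-one), would then reach every higher-multiplicity isotypic component.

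The main obstacle, and the place where $\frac{7}{3}$ replaces the $2$ of the spherical case, is the change of basis above, which swaps the four subcases of the induction. The ``dimension matches'' subcases become $n+m\equiv 0\pmod 4$ with $a(n,m)$ odd and $n+m\equiv 2\pmod 4$ with $a(n,m)$ even, handled by \eqref{prop:v_action1}/\eqref{prop:v_action2} whose diagonal entries are nonzero as soon as $s\notin\ZZ$. In the complementary ``dimension jumps by one'' subcases the extra basis vector of $(\Gamma_{n+3}\boxtimes\Gamma_{m+1})^{L,1}$ has to be reached from $(\Gamma_{n+2}\boxtimes\Gamma_m)^{L,1}$ or $(\Gamma_n\boxtimes\Gamma_{m+2})^{L,1}$ via \eqref{prop:v_action3}--\eqref{prop:v_action6}, and one must show that the factors $3s+r(n,m)-a(n,m)+m+6k-3$ and $2s+n-m-2a(n,m)+4k-2$ there do not vanish. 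A case-by-case analysis on $n\bmod 3$ combined with the parity of $m$ forced by the parity of $a(n,m)$ translates these non-vanishing conditions precisely into $s,\tilde s\notin\frac{7}{3}+\frac{2}{3}\ZZ_{\geq 0}$, which is exactly what we have assumed; this closes the induction and completes the proof.
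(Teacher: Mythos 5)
Your proposal is correct and is essentially the paper's own proof: the paper omits the argument for $\pi_{1,s}$ with the remark that it uses the same arguments as Proposition~\ref{prop:irredtriv}, and what you have written is exactly that adaptation, with the right multiplicity-one $K$-types $(4p+2,0),(0,4p+2),(2,4p),(4,4p+2)$ of $I(1,s)$, the correct swap of the bases $\{v_{n,m}\}$/$\{v'_{n,m}\}$ from Lemma~\ref{basemultspace}, and the non-vanishing conditions in \eqref{prop:v_action3}--\eqref{prop:v_action6} correctly translating into $s,\tilde s\notin\frac{7}{3}+\frac{2}{3}\ZZ_{\geq0}$.
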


\begin{proof}
The proof uses the same arguments as the proof of Proposition~\ref{prop:irredtriv}, so we omit it.
\end{proof}

\subsection{Complementary series}
\label{coml-series-sec}

Since both representations $\pi_{\varepsilon,s}$ are irreducible for $s\in(1,2)$, we immediately obtain the corresponding complementary series.

\begin{corollary}\label{cor:ComplementarySeries}
    For both choices of $\varepsilon\in\ZZ/2\ZZ$, the representations $\pi_{\varepsilon,s}$ belong to the complementary series for $s\in(1,2)$, i.e. $\pi_{\varepsilon,s}$ is irreducible and unitarizable for $s\in(1,2)$.
\end{corollary}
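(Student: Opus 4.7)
The plan is to separate the two claims. For irreducibility on $s\in(1,2)$, the strategy is simply to inspect the reducibility loci from Propositions~\ref{prop:irredtriv} and \ref{prop:sgnirred}: since $s\in(1,2)$ forces $\tilde{s}=3-s\in(1,2)$ as well, neither $s$ nor $\tilde{s}$ lies in $\ZZ_{\geq2}\cup(2+\tfrac{2}{3}\ZZ_{\geq0})$ (used for $\varepsilon=0$) nor in $\ZZ_{\geq2}\cup(\tfrac{7}{3}+\tfrac{2}{3}\ZZ_{\geq0})$ (used for $\varepsilon=1$), so $\pi_{\varepsilon,s}$ is irreducible on the whole interval for both parities of $\varepsilon$.

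For unitarizability, the plan is to use the standard complementary-series construction. Let $A(\varepsilon,s):I(\varepsilon,s)\to I(\varepsilon,\tilde{s})$ be the Knapp--Stein intertwining operator, normalized so that the scalars $A_{0,0}(s)$ and $A_{2,0}(s)$ from Corollary~\ref{cor:ClosedFormulasIntertwinerMultOneKtypes} are holomorphic and non-vanishing on $(1,2)$; this is possible because the Pochhammer symbols appearing in these scalars are strictly positive for $s,\tilde{s}\in(1,2)$. Combine $A(\varepsilon,s)$ with the invariant pairing \eqref{Gpairing} and complex conjugation on $K$ to produce a $G$-invariant sesquilinear form
\[
H_s(f_1,f_2) := \langle f_1,\,\overline{A(\varepsilon,s)f_2}\,\rangle_{\varepsilon,s},\qquad f_1,f_2\in I(\varepsilon,s).
\]
For real $s$ the form $H_s$ is Hermitian because $A(\varepsilon,s)$ satisfies the appropriate self-adjointness/symmetry identity $A(\varepsilon,s)^{*}=A(\varepsilon,\tilde{s})$, and $H_s$ is non-degenerate throughout $(1,2)$ since $A(\varepsilon,s)$ is a $K$-equivariant isomorphism on every $K$-isotypic component there (by Theorem~\ref{thm:ClosedFormulasIntertwinerEigenvalues} the eigenvalues $\mu_k(n,m,s)$ are ratios of positive Pochhammer products for $s\in(1,2)$, hence non-zero).

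Finally, positivity is obtained by a connectedness/signature argument. On each $K$-isotypic component $\overline{E_\sigma}\otimes E_\sigma^{L,\varepsilon}$ the form $H_s$ is represented by a matrix whose entries depend continuously on $s\in(1,2)$ and whose determinant is non-zero there; hence its signature is locally constant, and by connectedness of $(1,2)$ it is constant on the whole interval. At $s=\tfrac{3}{2}\in(1,2)$ we have $\tilde{s}=\bar s$, so $\pi_{\varepsilon,3/2}$ is unitary (in the unitary principal series) and $H_{3/2}$ is positive definite on every $K$-type after the chosen normalization. Therefore $H_s$ is positive definite for all $s\in(1,2)$, which completes $\pi_{\varepsilon,s}$ to a unitary irreducible representation. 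The main technical obstacle is verifying the normalization and Hermiticity: one must check that $A(\varepsilon,s)$ can be rescaled to be holomorphic and self-adjoint on $(1,2)$, which is where the explicit eigenvalue formulas of Section~3 enter crucially.
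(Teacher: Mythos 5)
Your proposal is correct and follows essentially the same route as the paper: irreducibility on $(1,2)$ is read off from Propositions~\ref{prop:irredtriv} and \ref{prop:sgnirred}, and unitarizability comes from the Knapp--Stein operator via the standard Hermitian-form/signature-continuation argument, which the paper simply delegates to \cite[Proposition 16.4]{Kna86} rather than spelling out. Your only cosmetic imprecision is calling the Pochhammer factors in Corollary~\ref{cor:ClosedFormulasIntertwinerMultOneKtypes} all positive (some, e.g.\ $(\tfrac{s}{2})_{-1}$, are negative for $s\in(1,2)$), but the ratios are still non-zero, and non-degeneracy of the form also follows directly from irreducibility, so nothing is lost.
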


\begin{proof}
    This follows from Propositions~\ref{prop:irredtriv} and \ref{prop:sgnirred} and the existence of an intertwining operator $A(\varepsilon,s):I(\varepsilon,s)\to I(\varepsilon,\tilde{s})$ (see also \cite[Proposition 16.4]{Kna86}).
\end{proof}

We now identify the complementary series within the classification of the unitary dual of $G=G_{2(2)}$ by Vogan~\cite{Vog94}.

Let $E'=H_\alpha$, then $\fa_*=\RR E'$ is a maximally split torus in $\fm$. We write $M_\fp A_*N_*\subseteq M$ for the Langlands decomposition of the corresponding minimal parabolic subgroup of $M$ with respect to which $\alpha$ is a positive root. Then $M_\fp A_\fp N_\fp$ is a minimal parabolic subgroup of $G$, where $A_\fp=AA_*$ and $N_\fp=NN_*$. Elements in the dual space of $\fa_\fp=\fa\oplus\fa_*$ are linear combinations of the dual basis $E^*,E'^*$ of $E,E'$. In particular, $\rho_* = E'^*$ and hence $\rho_\fp=\rho+\rho_*=3E^*+E'^*$.

We use induction in stages (see e.g. \cite[Chapter VII]{Kna86})
\begin{flalign*}
& \Ind_{MAN}^G(\Ind_{M_{\mathfrak{p}} A_* N_*}^{M} (\sigma \otimes \exp (\nu_* + \rho_*) \otimes 1) \otimes \exp (\nu + \rho) \otimes 1)
\\ & \cong \Ind_{M_{\mathfrak{p}} A_{\mathfrak{p}} N_{\mathfrak{p}}}^G( \sigma \otimes \exp(\nu_* + \nu + \rho_{\mathfrak{p}}) \otimes 1 ).
\end{flalign*}
Clearly, $1 \subseteq Ind_{M_{\mathfrak{p}} A_* N_*}^M(1 \otimes 1 \otimes 1)$, so for $\nu=2sE^*$ we find
$$ I(0,s) = \Ind_{MAN}^G(1\otimes\exp(\nu)\otimes1) \subseteq \Ind_{M_\fp A_\fp N_\fp}^G(1\otimes\exp(\nu)\otimes1). $$
To identify $I(1,s)$ with a subrepresentation of a principal series, recall from Section \ref{LieAlgPrel} that the subgroup $M_{\mathfrak{p}} \subseteq K$ is identified with
$ \{1,m_\alpha,m_{\alpha+\beta},m_{\alpha+2\beta}\} \cong \mathbb{Z}_2 \times \mathbb{Z}_2 $
and that $1,m_\alpha\in M_0$ and $m_{\alpha+\beta},m_{2\alpha+\beta}\in wM_0$. Therefore, if $\xi$ is the character sending $1,m_{\alpha}$ to $1$ and $m_{\alpha + \beta}, m_{2 \alpha + \beta}$ to $-1$, then $\sgn \subseteq Ind_{M_{\mathfrak{p}} A_* N_*}^{M}(\xi \otimes 1 \otimes 1)$. It follows that for $\nu=2sE^*$
$$ I(1,s) = \Ind_{MAN}^G(\sgn\otimes\exp(\nu)\otimes1) \subseteq \Ind_{M_{\mathfrak{p}} A_{\mathfrak{p}} N_{\mathfrak{p}}}^G(\xi \otimes \exp \nu \otimes 1 ). $$
This implies that $I(0,s)$ resp. $I(1,s)$ is a quotient of $\Ind_{M_{\mathfrak{p}} A_{\mathfrak{p}} N_{\mathfrak{p}}}^G(1 \otimes \exp(2 \rho_{\mathfrak{p}} - \nu) \otimes 1 )$ resp. $\Ind_{M_{\mathfrak{p}} A_{\mathfrak{p}} N_{\mathfrak{p}}}^G(\xi \otimes \exp(2 \rho_{\mathfrak{p}} - \nu) \otimes 1 )$. By the Langlands classification, this principal series representation has a unique irreducible quotient if $\rho_\fp - \nu$ is in the open positive Weyl chamber. We let
$$ \mu = \rho_{\mathfrak{p}} - \nu = (3 - 2s)E^* + (E')^* = (\tfrac{3}{2} - s)(3 \alpha + 2 \beta) +\tfrac{1}{2} \alpha = (5 - 3s) \alpha + (3-2s) \beta $$
and check the conditions for the representations in \cite[Sections 12 and 13]{Vog94}.

We note that for $\mathfrak{g}_2$ the short positive roots are $\alpha, \alpha+\beta$ and $2 \alpha + \beta$ and the long roots are $\beta$, $3 \alpha + \beta$ and $3 \alpha + 2 \beta$. We see that
\begin{align*}
 \langle \mu, H_\alpha \rangle &= 1, & \langle \mu, H_{\alpha+\beta} \rangle &= 4 - 3s, & \langle \mu, H_{2 \alpha+\beta} \rangle &= 5 - 3 s,\\
 \qquad \langle \mu, H_\beta \rangle &= 1-s, & \langle \mu, H_{3 \alpha + \beta} \rangle  &= 2-s, & \langle \mu, H_{3 \alpha + 2 \beta} \rangle  &= 3-2s. 
\end{align*}
This implies that $\mu$ is in the open positive Weyl chamber for $1 < s < \frac{3}{2}$, and for $\frac{3}{2} < s < 2$ there is an equivalence $I(\varepsilon,s)\cong I(\varepsilon,3-s)$, so it suffices to consider $1 < s < \frac{3}{2}$.
Then, for $\Ind_{M_{\frakp} A_{\frakp} N_{\frakp}}^G(1 \otimes \exp(\mu) \otimes 1)$ we are in the case of \cite[Theorem 12.1 c) i), ii)]{Vog94}. In the case of $\Ind_{M_{\frakp} A_{\frakp} N_{\frakp}}^G(\xi \otimes \exp(\mu) \otimes 1)$ we have a representation $\xi$ of $M_{\mathfrak{p}}$ with $\xi(m_{\alpha}) = \xi(m_{3 \alpha + 2 \beta}) = 1$. Hence we are in the case \cite[Theorem 13.2 d) i)]{Vog94}, with $\alpha$ and $3 \alpha + 2 \beta$ being the short and long roots.

\begin{figure}[H]
\caption{The complementary series (grey) for $\Ind_{M_{\frakp} A_{\frakp} N_{\frakp}}^G(1 \otimes \exp(\mu) \otimes 1)$ (left) and $\Ind_{M_{\frakp} A_{\frakp} N_{\frakp}}^G(\xi \otimes \exp(\mu) \otimes 1)$ (right) and the parameters for which the complementary series of $I(\varepsilon,s)$ occurs as Langlands quotient (red)}
\label{fig-no-action}
\centering
\includegraphics[width=0.45\textwidth]{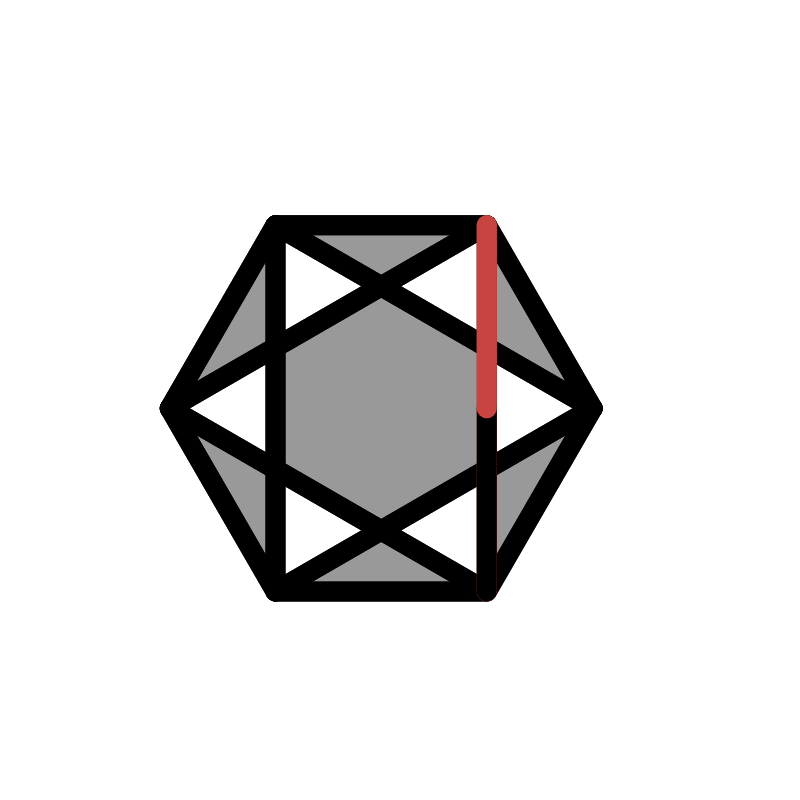}
\includegraphics[width=0.45\textwidth]{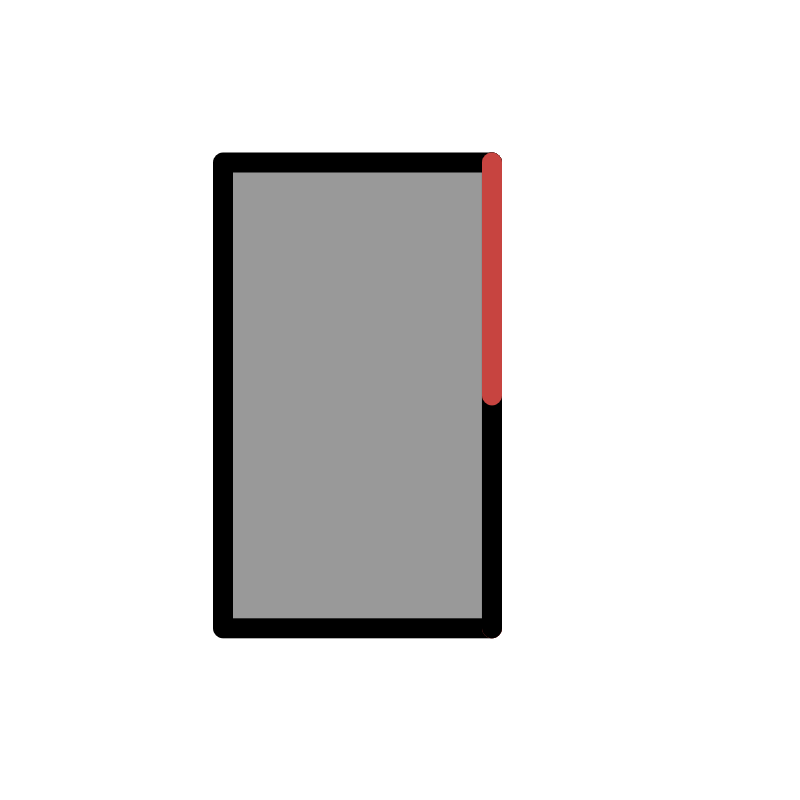}
\end{figure}
Figure~\ref{fig-no-action} shows the complementary series representations of $\Ind_{M_{\frakp} A_{\frakp} N_{\frakp}}^G(1 \otimes \exp(\mu) \otimes 1)$ and $\Ind_{M_{\frakp} A_{\frakp} N_{\frakp}}^G(\xi \otimes \exp(\mu) \otimes 1)$ in grey and the parameters for which the complementary series of $I(0,s)$ resp. $I(1,s)$ occur as irreducible quotient (Langlands quotient) in red. The figures show the two-dimensional space of roots for $\mathfrak{g}_2$ and have the short root $\alpha$ as the $x$-coordinate and the long root $3 \alpha + 2 \beta$ as the $y$-coordinate. See \cite{Vog94} for a more detailed description.

\begin{figure}[H]
\caption{The complementary series (grey) for $\Ind_{M_{\frakp} A_{\frakp} N_{\frakp}}^G(1 \otimes \exp(\mu) \otimes 1)$ (left) and $\Ind_{M_{\frakp} A_{\frakp} N_{\frakp}}^G(\xi \otimes \exp(\mu) \otimes 1)$ (right) and the parameters for which the complementary series of $I(\varepsilon,s)$ occurs as irreducible subquotient (red)}
\label{fig-compl-series-full}
\centering
\includegraphics[width=0.45\textwidth]{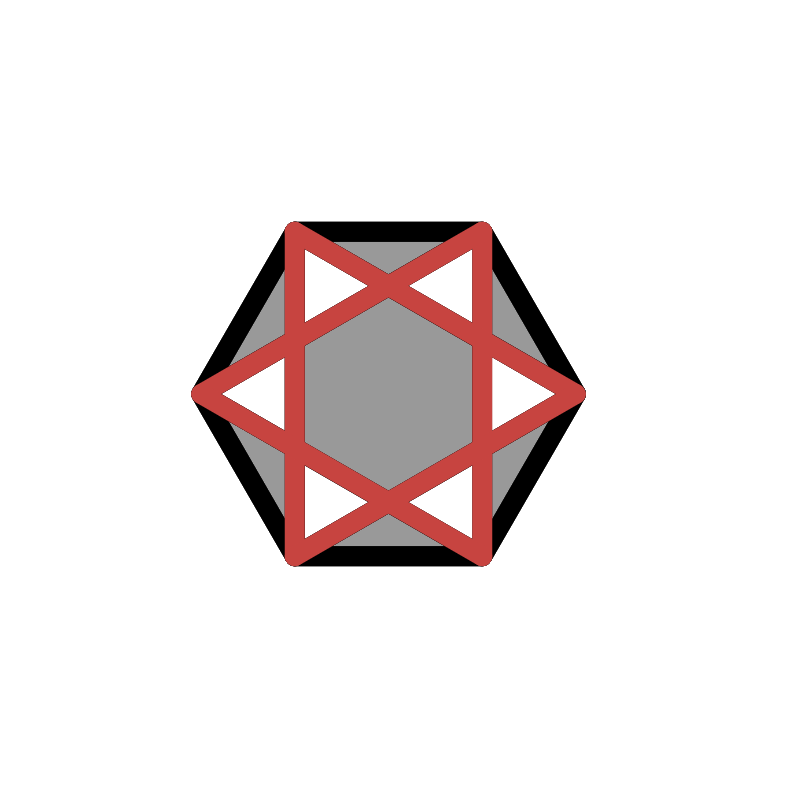}\includegraphics[width=0.45\textwidth]{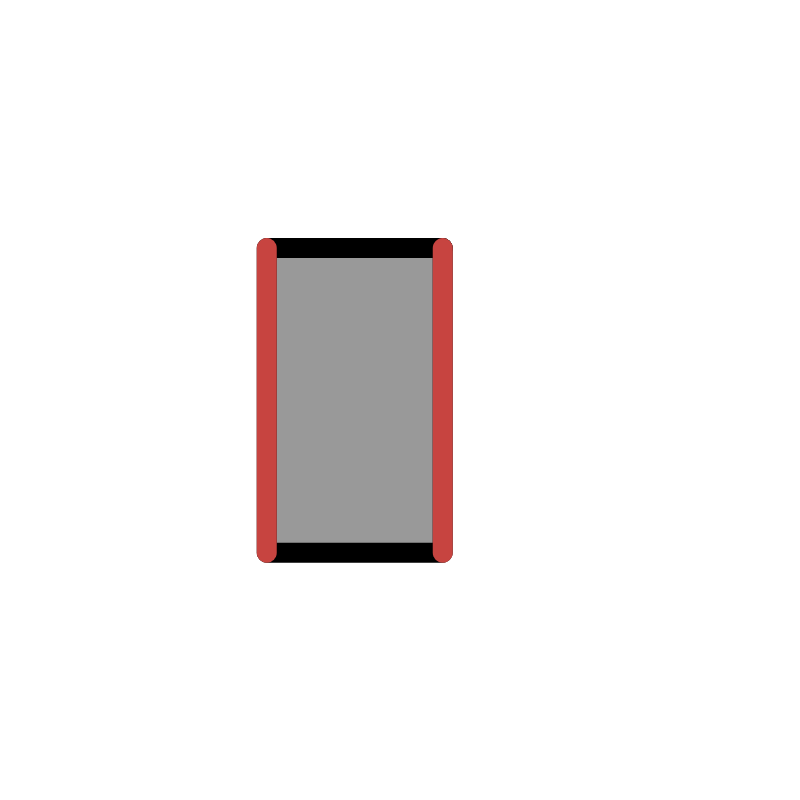}
\end{figure}
By applying the action of the Weyl group $N_K(\mathfrak{a}_{\mathfrak{p}})/M_{\mathfrak{p}}$, we can further identify the complementary series representations of $I(\varepsilon,s)$ as subquotients of the principal series for a large part of the boundary of the complementary series region for the principal series, see Figure~\ref{fig-compl-series-full}.

\subsection{Some irreducible subrepresentations}\label{sec:MinrepAndDoubleLadder}

It follows from \cite[Theorem 4.8]{Kab12} that the minimal representation (also called ladder representation) of $G$ is a subrepresentation of $I(1,s)$ for $s = \frac{2}{3}$ with $K$-types given by the embedding vectors $v_{3m+2,m}(\frac{2}{3},0)$ ($m\in\ZZ_{\geq0}$). Furthermore, it follows from \cite[Theorem 4.10]{Kab12} that there is an irreducible subrepresentation (also called double ladder representation) inside $I(0,s)$ for $s = \frac{1}{3}$ with $K$-types given by the embedding vectors $v_{3m,m}(\frac{1}{3},0)$ and $v_{3m+4,m}(\frac{1}{3},0)$ ($m\in\ZZ_{\geq0}$). We show that these two subrepresentations are precisely the kernel of the corresponding intertwining operator.

For this, we normalize the meromorphic family of intertwining operators $A(\varepsilon,s)$ so that
$$ A_{0,0}(s)\equiv1 \quad \mbox{for }\varepsilon=0 \qquad \mbox{and} \qquad A_{2,0}(s)\equiv1 \quad\mbox{for }\varepsilon=1. $$

\begin{proposition}
\label{prop:ladderker}
The kernel of the intertwining operator $(s-\frac{2}{3})A(1,s):I(1,s)\to I(1,\tilde{s})$ at $s = \frac{2}{3}$ is the ladder representation of $G$. It is also the image of the intertwiner $A(1,\tilde{s}):I(1,\tilde{s})\to I(1,s)$. Moreover, this subrepresentation has $K$-types $\sigma(3m+2,m)$ ($m\in\ZZ_{\geq0}$) with corresponding embedding vectors $v_{3m+2,m}(s,0)$.
\end{proposition}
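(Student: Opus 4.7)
The plan is to prove the kernel equality via two inclusions, and then to deduce the image assertion from the Knapp--Stein functional equation. For the forward inclusion $\pi_{\mathrm{min}}\subseteq\ker[(s-\tfrac{2}{3})A(1,s)]_{s=2/3}$, I would first note that on the minimal $K$-type $\sigma(2,0)$ (which has multiplicity one in $I(1,s)$) the normalization $A_{2,0}(s)\equiv 1$ forces $(s-\tfrac{2}{3})A(1,s)$ to act on this isotypic component as scalar multiplication by $s-\tfrac{2}{3}$, which vanishes at $s=\tfrac{2}{3}$. Since the kernel is a $(\mathfrak g_\CC,K)$-submodule of $I(1,\tfrac{2}{3})$ containing $\sigma(2,0)$, and since $\pi_{\mathrm{min}}$ is the irreducible subrepresentation of $I(1,\tfrac{2}{3})$ generated by $\sigma(2,0)$ (by \cite[Theorem~4.8]{Kab12}), this inclusion is immediate.

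The reverse inclusion is the main work. I would exploit the upper triangular structure of $A_{n,m}(s)$ in the basis $\mathcal{B}_s^{n,m}$ from Proposition~\ref{evprop}: the restriction of $(s-\tfrac{2}{3})A_{n,m}(s)|_{s=2/3}$ to the $\varepsilon=1$ sub-block of $(\Gamma_n\boxtimes\Gamma_m)^{L_0}$ is upper triangular, and a basis vector lies in its kernel only when the corresponding diagonal entry $(s-\tfrac{2}{3})\mu_j(n,m,s)$ vanishes at $s=\tfrac{2}{3}$. Using Theorem~\ref{thm:ClosedFormulasIntertwinerEigenvalues}, I would verify two facts: (i) for $(n,m)=(3M+2,M)$, only $\mu_0(3M+2,M,s)=(1+\tilde s)_M/(1+s)_M$ is holomorphic and finite at $s=\tfrac{2}{3}$, whereas the $\mu_{2j}(3M+2,M,s)$ for $j\geq 1$ inherit a simple pole from the factor $A_{4j+2,0}(s)$; and (ii) for every other $K$-type $\sigma(n,m)$ appearing in $I(1,s)$, each diagonal entry in the $\varepsilon=1$ block carries a simple pole at $s=\tfrac{2}{3}$, coming from the vanishing of some Pochhammer factor $(\tfrac{3s-2}{2})_r$ or $(\tfrac{3s-2}{6})_r$ in the denominator. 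Since multiplication by $s-\tfrac{2}{3}$ sends a holomorphic diagonal entry to zero and a simple pole to a nonzero finite value, the kernel is spanned precisely by the vectors $v_{3M+2,M}(\tfrac{2}{3},0)$ with $M\geq 0$, yielding $\ker\subseteq\pi_{\mathrm{min}}$.

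For the image assertion I would invoke the Knapp--Stein functional equation $A(1,\tilde s)\circ A(1,s)=\mathrm{id}$, which holds with our normalization (verifiable on $\sigma(2,0)$ where both factors equal one, and extended by generic irreducibility of $I(1,s)$ combined with Schur's lemma). Multiplying by $s-\tfrac{2}{3}$ and specializing gives $[(s-\tfrac{2}{3})A(1,s)|_{s=2/3}]\circ A(1,\tfrac{7}{3})=0$, so the image of $A(1,\tfrac{7}{3})$ is contained in $\ker[(s-\tfrac{2}{3})A(1,s)|_{s=2/3}]=\pi_{\mathrm{min}}$. Conversely, $A_{2,0}(\tfrac{7}{3})=1$ implies that $\sigma(2,0)$ lies in the image of $A(1,\tfrac{7}{3})$, and by irreducibility of $\pi_{\mathrm{min}}$ this image contains, and hence equals, $\pi_{\mathrm{min}}$.

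The principal obstacle is the case analysis in (ii): one must enumerate the families of $K$-types in $I(1,s)$ not of the form $\sigma(3M+2,M)$ and check in each case that the eigenvalue formulas of Theorem~\ref{thm:ClosedFormulasIntertwinerEigenvalues} produce a simple pole at $s=\tfrac{2}{3}$ on every diagonal entry of the $\varepsilon=1$ block. The calculation is tractable because the only factors that can vanish at $s=\tfrac{2}{3}$ are $\tfrac{3s-2}{2}$ and $\tfrac{3s-2}{6}$, each with a simple zero, so pole orders can be read directly off the Pochhammer symbols; nevertheless the bookkeeping over the four families appearing in Theorem~\ref{thm:ClosedFormulasIntertwinerEigenvalues} requires some care.
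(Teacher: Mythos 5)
Your proposal is correct, and its computational core is the same as the paper's: everything reduces to tracking the zeros and poles at $s=\tfrac23$ (resp.\ $\tilde s=\tfrac73$) of the multiplicity-one eigenvalues in Corollary~\ref{cor:ClosedFormulasIntertwinerMultOneKtypes} and of the diagonal entries in Theorem~\ref{thm:ClosedFormulasIntertwinerEigenvalues}, using the upper-triangular structure from Proposition~\ref{evprop}. The packaging differs in two places. First, for the kernel you argue by two inclusions, importing Kable's embedding of $\pi_{\min}$ with lowest $K$-type $\sigma(2,0)$ and using that this multiplicity-one $K$-type generates $\pi_{\min}$; the paper instead identifies the kernel directly as the span of the vectors $v_{3m+2,m}(\tfrac23,0)$ and matches this with Kable's description of the ladder module. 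Both work; your version needs the (true, but worth stating) fact that when only the \emph{first} diagonal entry of an upper-triangular block vanishes, the kernel of that block is exactly the span of the first basis vector --- your phrase ``a basis vector lies in the kernel only when its diagonal entry vanishes'' is a necessary condition on basis vectors, not yet a description of the kernel, so do close that half-step. Second, for the image you use the functional equation $A(1,\tilde s)\circ A(1,s)=\mathrm{id}$ (correctly justified via generic irreducibility, Schur, and the normalization on $\sigma(2,0)$) to get $\mathrm{im}\,A(1,\tfrac73)\subseteq\ker$, and then surjectivity onto $\pi_{\min}$ from $A_{2,0}(\tfrac73)=1$; the paper instead reads off that all eigenvalues of $A(1,\tilde s)$ at $\tilde s=\tfrac73$ vanish except $\mu_0(3m+2,m,\tilde s)$. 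Your route is actually slightly more robust here, since vanishing of all diagonal entries of an upper-triangular block does not by itself force the block (hence its image) to vanish. One caution on the bookkeeping you defer: the simple poles at $s=\tfrac23$ do not all come from the $A_{4j+2,0}$-type factors --- e.g.\ for the entry $\mu_1(3m,m,s)$, which lies in the $\varepsilon=1$ block and carries the regular factor $A_{2,0}(s)\equiv1$, the pole comes from the prefactor $(\tfrac{r+3s+2j-2}{2})_{2j+1}$ with $r=j=0$; this case must be caught, or one would wrongly place $v'_{3m,m}(\tfrac23,0)$ in the kernel.
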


\begin{proof}
    First note that $A(1,\tilde{s})$ is regular at $\tilde{s}=\frac{7}{3}$ by the formulas in Corollary~\ref{cor:ClosedFormulasIntertwinerMultOneKtypes} and Theorem~\ref{thm:ClosedFormulasIntertwinerEigenvalues}. A close look at the formulas in Corollary~\ref{cor:ClosedFormulasIntertwinerMultOneKtypes} shows that $A_{n,m}(\tilde{s})=0$ for all multiplicity-one $K$-types $\sigma(n,m)$ inside $I(1,\tilde{s})$ except $A_{2,0}(\tilde{s})$, which is normalized to be one. Inspecting the formulas in Theorem~\ref{thm:ClosedFormulasIntertwinerEigenvalues} thoroughly reveals that all $\mu_k(n,m,\tilde{s})=0$ except for $\mu_0(3m+2,m,\tilde{s})$ which turns out to be
    $$ \mu_0(3m+2,m,\tilde{s}) = \frac{(s+1)_m}{(\tilde{s}+1)_m}. $$
    This shows that the image of $A(1,\tilde{s})$ is a subrepresentation of $I(1,s)$ with $K$-types $\sigma(3m+2,m)$ ($m\in\ZZ_{\geq0}$), the ladder representation.

    A similar analysis of $A(1,s)$ shows that $A_{n,m}(s)$ in Corollary~\ref{cor:ClosedFormulasIntertwinerMultOneKtypes} has a simple pole at $s=\frac{2}{3}$ for all multiplicity-one $K$-types $\sigma(n,m)$ except $A_{2,0}(s)$. Consequently, all eigenvalues $\mu_k(n,m,s)$ in Theorem~\ref{thm:ClosedFormulasIntertwinerEigenvalues} have a simple pole at $s=\frac{2}{3}$ except $\mu_0(3m+2,m,s)$, which is regular at $s=\frac{2}{3}$. We can therefore regularize $\tilde{A}(s)=(s-\frac{2}{3})A(s)$ to obtain an intertwiner which is regular at $s=\frac{2}{3}$ and whose eigenvalues are all non-zero except $\mu_0(3m+2,m,s)$. Its kernel is again the ladder representation.
\end{proof}

We can prove a similar statement for the double ladder representation.

\begin{proposition}
\label{prop:doubleladderker}
The kernel of the intertwining operator $(s-\frac{1}{3})A(0,s):I(0,s)\to I(0,\tilde{s})$ at $s = \frac{1}{3}$ is the double ladder representation of $G$. It is also the image of the intertwiner $A(0,\tilde{s}):I(0,\tilde{s})\to I(0,s)$. Moreover, this subrepresentation has $K$-types $\sigma(3m,m)$ and $\sigma(3m+4,m)$ ($m\in\ZZ_{\geq0}$) with corresponding embedding vectors $v_{3m,m}(s,0)$ and $v_{3m+4,m}(s,0)$.
\end{proposition}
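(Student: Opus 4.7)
The plan is to mirror almost exactly the proof of Proposition~\ref{prop:ladderker}, with the normalization $A_{0,0}(s)\equiv 1$ now in force (since we are in the $\varepsilon=0$ series) and with the distinguished values $s=\tfrac{1}{3}$, $\tilde s=\tfrac{8}{3}$ playing the roles of $s=\tfrac{2}{3}$, $\tilde s=\tfrac{7}{3}$. The multiplicity-one $K$-types in $I(0,\cdot)$ are $\sigma(4p,0)$, $\sigma(0,4p)$, $\sigma(2,4p+2)$, $\sigma(4,4p)$, so we have four families of scalar eigenvalues to inspect via the closed formulas in Corollary~\ref{cor:ClosedFormulasIntertwinerMultOneKtypes}.

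First I would substitute $\tilde s=\tfrac{8}{3}$ in each of the formulas in Corollary~\ref{cor:ClosedFormulasIntertwinerMultOneKtypes} and show that $A(0,\tilde s)$ is regular there, and that $A_{n,m}(\tilde s)$ vanishes on every multiplicity-one $K$-type except exactly those of the form $\sigma(4r,0)$ with $r=0$, i.e.\ $A_{0,0}(\tilde s)=1$. Here I expect to find that the numerators $(\tfrac{3\tilde s-3}{2})_r$, $(\tfrac{3\tilde s-2}{2})_r$, etc., give vanishing factors for $r\ge 1$ at $\tilde s=\tfrac{8}{3}$, because $\tfrac{3\tilde s-3}{2}=\tfrac{5}{2}$ and $\tfrac{3\tilde s-2}{2}=3$ and the other similar quantities land in the half-integer pattern that makes the Pochhammer symbols non-zero precisely for the allowed lines. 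I must check carefully which of the four families $A_{4r,0}$, $A_{0,2r}$, $A_{2,2r}$, $A_{4,2r}$ survive, expecting that only the $K$-types on the two lines $\sigma(3m,m)$ and $\sigma(3m+4,m)$ give non-zero contributions, namely at $(n,m)=(0,0)$ on the first line and at $(n,m)=(4,0)$ on the second line (which lies in the multiplicity-one range since $a(4,0)=0$).

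Next, for each $K$-type $\sigma(n,m)$ I feed these scalar values into Theorem~\ref{thm:ClosedFormulasIntertwinerEigenvalues} and check that all eigenvalues $\mu_k(n,m,\tilde s)$ vanish except those of the form $\mu_0(3m,m,\tilde s)$ and $\mu_0(3m+4,m,\tilde s)$. This amounts to verifying that (a) all four formulas involving $A_{n',0}$ with $n'\ge 6$ vanish because the relevant multiplicity-one coefficient vanishes, and (b) for the surviving two lines the leading Pochhammer $(r+\tilde s+4j)_{m-2j}$ (or its $r$-shifted analogue) is non-zero and equals a rational ratio of products that I can evaluate explicitly. This shows that the image of $A(0,\tilde s):I(0,\tilde s)\to I(0,s)$ is a subrepresentation whose $K$-types are contained in, and in fact equal to, the two lines $\{\sigma(3m,m)\}\cup\{\sigma(3m+4,m)\}$, which is exactly the double ladder representation of Kable~\cite[Theorem 4.10]{Kab12}. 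Its embedding vectors are the first basis vectors $v_{3m,m}(s,0)$ and $v_{3m+4,m}(s,0)$, since the image of $A(0,\tilde s)$ on each isotypic space is spanned by the image of its action on $\mathcal B_{\tilde s}^{n,m}$, and only the leading diagonal entry is non-zero.

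Finally, for $A(0,s)$ at $s=\tfrac{1}{3}$, I would check from the same formulas that $A_{n,m}(s)$ has a simple pole there for every multiplicity-one $K$-type except at $(n,m)=(0,0)$ and $(n,m)=(4,0)$, where it is regular and non-zero (in particular, $A_{0,0}\equiv 1$ by normalization). Hence all eigenvalues $\mu_k(n,m,s)$ acquire a simple pole at $s=\tfrac{1}{3}$ except those $\mu_0(3m,m,s)$ and $\mu_0(3m+4,m,s)$ on the two double-ladder lines. The regularization $\tilde A(s)=(s-\tfrac{1}{3})A(0,s)$ is therefore a well-defined intertwiner $I(0,s)\to I(0,\tilde s)$ at $s=\tfrac{1}{3}$ whose kernel is precisely the span of the double-ladder $K$-types, proving the claim. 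The main obstacle I anticipate is purely combinatorial bookkeeping of which Pochhammer factors in Theorem~\ref{thm:ClosedFormulasIntertwinerEigenvalues} produce the single simple pole (and not a higher-order zero or pole) on the two lines $\sigma(3m,m)$, $\sigma(3m+4,m)$; once this is settled, equality of kernel and image with the double ladder representation, and the identification of the embedding vectors, follows formally as in the proof of Proposition~\ref{prop:ladderker}.
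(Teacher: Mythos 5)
Your proposal takes essentially the same route as the paper, whose proof of this proposition is literally ``the same arguments as in the proof of Proposition~\ref{prop:ladderker}'': one inspects Corollary~\ref{cor:ClosedFormulasIntertwinerMultOneKtypes} and Theorem~\ref{thm:ClosedFormulasIntertwinerEigenvalues} to see that only $\mu_0(3m,m,s)$ and $\mu_0(3m+4,m,s)$ are regular at $s=\tfrac{1}{3}$ while all other eigenvalues have a simple pole, so the kernel of the regularized operator (and the image of $A(0,\tilde s)$) is the double ladder. The only slip is your initial claim that at $\tilde s=\tfrac{8}{3}$ only $\sigma(4r,0)$ with $r=0$ survives among multiplicity-one $K$-types --- in fact $A_{0,0}$ and $A_{4,0}$ both survive, as you correctly state a few lines later.
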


\begin{proof}
The same arguments as in the proof of Proposition~\ref{prop:ladderker} show that only $\mu_0(3m,m,s)$ and $\mu_0(3m+4,m,s)$ are regular at $s=\frac{1}{3}$ while all other $\mu_k(n,m,s)$ have a single pole. Hence, $(s-\frac{1}{3})\mu_k(n,m,s)=0$ if and only if $(n,m)=(3m,m)$ or $(3m+4,m)$ and $k=0$. So the kernel of $(s-\frac{1}{3})A(0,s)$ at $s=\frac{1}{3}$ is the double ladder representation. Similar considerations apply to $A(0,\tilde{s})$.
\end{proof}

\subsection{Quaternionic discrete series and limit of discrete series}

In \cite[Propositions 5.7 and 8.4]{GW96} a series $\pi_k^{\textup{GW}}$, $k\geq2$, of irreducible unitary representations of $G$ is constructed using the cohomology of certain line bundles
over a $\mathbb P^1$-bundle over $G/K$. The lowest $K$-type of $\pi_k^{\textup{GW}}$ is $\sigma(0,k-2)$. For $k\geq5$ it is a quaternionic discrete series representation, and for $k=4$ it is a limit of discrete series. We show that for $k\geq4$ even, the representation $\pi_k^{\textup{GW}}$ occurs as a subrepresentation of $\pi_{\varepsilon,s}$ for $s=\frac{k}{2}$ and $\varepsilon\equiv\frac{k}{2}+1\pmod2$, and identify it in terms of the intertwining operators $A(\varepsilon,s)$.

Note that the infinitesimal character of $\pi_k^{\textup{GW}}$ is $\rho - \frac{k}{2}(3 \alpha + 2 \beta)$ by \cite[Proposition 5.7]{GW96}, while the infinitesimal character of $\pi_{\varepsilon,s}$ is $\rho - s (3 \alpha + 2 \beta)$ (follows e.g. from \cite[Proposition 8.22]{Kna86}). Since $\pi_{\varepsilon,s}$ is irreducible for $s=\frac{k}{2}$ with $k$ odd, the quaternionic discrete series $\pi_k^{\textup{GW}}$ cannot occur as a subrepresentation of $\pi_{\varepsilon,s}$ due to the two representations having different $K$-multiplicities.\\

We first show that $\pi_k^{\textup{GW}}$ is indeed a subrepresentation of $\pi_{\varepsilon,s}$.

\begin{proposition}\label{prop:QDS}
Let $k\geq4$ be even and put $s=\frac{k}{2}$ and $\varepsilon\equiv\frac{k}{2}+1\pmod2$. Then $\pi_k^{\textup{GW}}$ is a subrepresentation of $I(\varepsilon,s)$.
\end{proposition}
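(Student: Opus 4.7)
My plan combines a parameter compatibility check, a matching of infinitesimal characters and minimal $K$-type, and an appeal to the classical construction of $\pi_k^{\textup{GW}}$ inside an induced module.

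First I would verify that the minimal $K$-type $\sigma(0,k-2)$ of $\pi_k^{\textup{GW}}$ occurs in $I(\varepsilon,k/2)|_K$ with the correct parity. The congruence $n \equiv m \pmod{2}$ forces $k$ even (which is given), and $a(0,k-2)=0$ so the multiplicity is $1$. By \eqref{eq:MultiplicitySpaceWeightBasis}, the element $w$ acts on the generator $\zeta_0^{0,k-2}$ by $(-1)^{(k-2)/2}$, so this $K$-type lies in the $\varepsilon$-isotype exactly when $\varepsilon \equiv (k-2)/2 \equiv k/2+1 \pmod{2}$, which is the hypothesis. Next I confirm the infinitesimal characters match: the infinitesimal character of $I(\varepsilon,s)$ is $\rho - s(3\alpha+2\beta)$ (see e.g.\ \cite[Proposition~8.22]{Kna86}), which at $s=k/2$ coincides with that of $\pi_k^{\textup{GW}}$ from \cite[Proposition~5.7]{GW96}.

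Second, the embedding itself is provided by Yoshinaga's construction in \cite{Yoshi98}, where $\pi_k^{\textup{GW}}$ is realized as a subrepresentation of the relevant degenerate principal series. Intrinsically, this amounts to exhibiting a non-zero $(\mathfrak{g}_\CC,K)$-module map $\pi_k^{\textup{GW}} \to I(\varepsilon,k/2)$. Since $\pi_k^{\textup{GW}}$ is irreducible, such a map is automatically injective, and the coincidence of the minimal $K$-type together with the infinitesimal character identifies its image with the expected subrepresentation.

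The main obstacle is producing this map without circular reasoning. A self-contained path within the framework of this paper would proceed by looking at the cyclic $(\mathfrak{g}_\CC,K)$-submodule $V \subseteq I(\varepsilon,k/2)$ generated by the multiplicity-one vector $\zeta_0^{0,k-2}$, tracking its $K$-type content through Proposition~\ref{prop:v_action}, and comparing with the Blattner-type $K$-type formula for $\pi_k^{\textup{GW}}$ in \cite{GW96}. The characterization of $\pi_k^{\textup{GW}}$ up to isomorphism by its minimal $K$-type together with its infinitesimal character (e.g.\ via the classification of admissible irreducible $(\mathfrak{g}_\CC,K)$-modules) would then force $V \cong \pi_k^{\textup{GW}}$. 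For the purposes of this proposition, however, citing \cite{Yoshi98} is the cleanest route, since the finer identification of $\pi_k^{\textup{GW}}$ in terms of the Knapp--Stein operators $A(\varepsilon,s)$ is deferred to Theorems~\ref{thm:LDS} and \ref{thm:QDS}.
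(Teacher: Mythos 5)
There is a genuine gap: your appeal to Yoshinaga covers only part of the range of $k$. Yoshinaga's result (\cite[Theorem 2.3]{Yoshi98}) concerns the quaternionic \emph{discrete series}, i.e.\ $k\geq 6$; for $k=4$ the representation $\pi_4^{\textup{GW}}$ is only a \emph{limit} of discrete series, and no embedding into $I(1,2)$ is provided by that citation. The paper treats $k=4$ by a separate argument: starting from the known embedding $\calH_6\subseteq I(0,3)$, it applies the Zuckerman translation functor $\psi_{\rho-2(3\alpha+2\beta)}^{\rho-3(3\alpha+2\beta)}$ to get $\calH_4\cong p_{\rho-2(3\alpha+2\beta)}(\calH_6\otimes\frakg_2)$, and then realizes this translation concretely inside the induced picture by embedding $\frakg_2$ into $I(1,-1)$ via matrix coefficients against a lowest weight vector and using the multiplication map $I(0,3)\otimes I(1,-1)\to I(1,2)$. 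The image is non-zero, has the singular infinitesimal character $\rho-2(3\alpha+2\beta)$, and is a quotient of $\calH_4$, hence isomorphic to it. Your proof needs some such argument for $k=4$; without it the proposition is only established for $k\geq6$.

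Your proposed fallback --- generating the cyclic submodule of $I(\varepsilon,k/2)$ from $\zeta_0^{0,k-2}$ and identifying it by minimal $K$-type plus infinitesimal character --- is also not workable as stated. Establishing that this cyclic module is irreducible with the $K$-types of $\pi_k^{\textup{GW}}$ is essentially the content of Theorems~\ref{thm:LDS} and \ref{thm:QDS}, whose proofs \emph{use} Proposition~\ref{prop:QDS}, so this route risks circularity; and the principle that a minimal $K$-type together with an infinitesimal character determines an irreducible representation is not automatic, in particular at the singular infinitesimal character occurring for $k=4$. Your preliminary checks (parity of the $K$-type $\sigma(0,k-2)$ under $w$, matching of infinitesimal characters) are correct and agree with the paper's discussion preceding the proposition, but they are necessary conditions only and do not produce the intertwining map.
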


\begin{proof}
For $k\geq6$ the representation $\pi_k^{\textup{GW}}$ belongs to the discrete series, so the claim follows from \cite[Theorem 2.3]{Yoshi98} (note the typo, $\tilde H_2$ should be $\tilde H_1$), where it was stated without a proof. See also \cite{GGS02}.

Now we prove that the limit of the quaternionic discrete series $\pi_4^{\textup{GW}}$ is a subrepresentation of $I(1,2)$. Write $\calH_k$ for the $(\frakg,K)$-module of $\pi_k^{\textup{GW}}$. We already know that $\calH_6$ can be embedded into $I(0,3)$.
Choose the positive system $\Delta^+$ relative to the infinitesimal character $\rho - 3(3 \alpha + 2 \beta)$, which is non-singular. Note that this is also a choice of positive system for $\rho - 2(3 \alpha + 2 \beta)$ (which is singular) and that the weight $- (3 \alpha + 2 \beta)$ is dominant integral with respect to $\Delta^+$. Further note that the adjoint representation $\frakg_2$ has lowest weight $3 \alpha + 2 \beta$. Thus, applying the Zuckerman translation functor we obtain $\mathcal{H}_4 \cong \psi_{\rho - 2(3 \alpha + 2 \beta)}^{\rho - 3(3 \alpha + 2 \beta)} \mathcal{H}_6$, so that $\mathcal{H}_4 \cong p_{\rho - 2(3 \alpha + 2 \beta)} (\mathcal{H}_6 \otimes \frakg_2)$, where $p_{\rho - 2(3 \alpha + 2 \beta)}$ is the projection onto the space with (generalized) infinitesimal character $\rho - 2(3 \alpha + 2 \beta)$.

Recall now that pointwise multiplication defines for all $\varepsilon,\varepsilon'\in\ZZ/2\ZZ$, $s,s'\in\CC$,
a $\fg_2$-equivariant map
$$ I(\varepsilon, s)
\otimes
I(\varepsilon', s')
\to I(\varepsilon+
\varepsilon', s+s'), \quad
f_1\otimes f_2\mapsto [g\mapsto f_1(g)f_2(g), g\in G].
$$
Moreover, the Lie algebra
$\frakg_2$ is realized
as a subrepresentation in
$I(1,-1)$ via
the matrix coefficients,
$v \mapsto [g \mapsto \langle g^{-1} v, v_{-3\alpha - 2 \beta} \rangle]$, where $v_{-3\alpha - 2 \beta}\in\frakg_2$ is a vector of weight $-3\alpha-2\beta$ with respect to $\frakh$.
We obtain a
$\fg_2$-equivariant map
 $\mathcal{H}_6 \otimes \frakg_2 \rightarrow I(1,2)$ by  composing the two maps
$$
I(0,3) \otimes \mathfrak{g}_2 \rightarrow I(1,2),\quad
f \otimes v \mapsto \left[ g \mapsto \langle g^{-1} v, v_{-3 \alpha - 2 \beta} \rangle f(g)\right].$$
The image of $\mathcal{H}_6 \otimes \frakg_2$ has infinitesimal character $\rho - 2(3\alpha + 2 \beta)$, hence it is a quotient of the limit of the discrete series. It is also non-zero, so it is isomorphic to $\calH_4$.
\end{proof}

We claim that the limit of the quaternionic discrete series $\pi_4^{\textup{GW}}$ is isomorphic to the kernel of the Knapp--Stein operator $A(1,2): I(1, 2)\to I(1, 1)$. Again, we normalize $A(1,s)$ such that $A_{2,0}(s)\equiv1$.

\begin{theorem}\label{thm:LDS}
The kernel 
$\ker(A(1,2))$
is
isomorphic to the limit of discrete series $\pi_4^{\textup{GW}}$.
\end{theorem}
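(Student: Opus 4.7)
The plan is to combine Proposition~\ref{prop:QDS}, which already gives an embedding $\pi_4^{\textup{GW}}\hookrightarrow I(1,2)$, with the explicit eigenvalue formulas of Corollary~\ref{cor:ClosedFormulasIntertwinerMultOneKtypes} and Theorem~\ref{thm:ClosedFormulasIntertwinerEigenvalues}. First I would verify that $A(1,2)$ kills the lowest $K$-type of $\pi_4^{\textup{GW}}$; then I would use irreducibility to propagate this vanishing to the whole subrepresentation; matching $K$-type spectra will finally yield equality with $\ker A(1,2)$.

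For the vanishing on the lowest $K$-type, recall from \cite{GW96} that $\pi_4^{\textup{GW}}$ has lowest $K$-type $\sigma(0,2)$, which is a multiplicity-one $K$-type of $I(1,2)$. Corollary~\ref{cor:ClosedFormulasIntertwinerMultOneKtypes}, with the normalisation $A_{2,0}(s)\equiv 1$, yields
$$ A_{0,2}(s)=\frac{(\tilde s-1)(3\tilde s-2)}{(s-1)(3s-2)}, $$
which vanishes at $s=2$ (where $\tilde s=1$). Hence $A(1,2)$ acts as zero on the $\sigma(0,2)$-isotypic component of $I(1,2)$. Because $\pi_4^{\textup{GW}}$ is irreducible, its lowest $K$-type generates it as a $(\frakg_\CC,K)$-module, and because $A(1,2)$ is a $(\frakg_\CC,K)$-intertwiner, it must therefore vanish on all of $\pi_4^{\textup{GW}}$. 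This proves $\pi_4^{\textup{GW}}\subseteq\ker A(1,2)$.

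For the reverse inclusion I would carry out a $K$-type by $K$-type comparison. Specialising Theorem~\ref{thm:ClosedFormulasIntertwinerEigenvalues} at $s=2$, $\tilde s=1$ gives closed-form expressions for the diagonal entries $\mu_0(n,m,2),\dots,\mu_{a(n,m)}(n,m,2)$ of the upper-triangular matrix $A_{n,m}(2)$, for every $K$-type $\sigma(n,m)$ of $I(1,2)$. Scanning these formulas against the Gross--Wallach $K$-type spectrum of $\pi_4^{\textup{GW}}$, I expect that the $K$-types of $\pi_4^{\textup{GW}}$ are precisely those $\sigma(n,m)$ for which all $\mu_j(n,m,2)$ vanish, and with the correct multiplicities. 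Combined with the upper bound $\dim\ker A_{n,m}(2)\leq\#\{j:\mu_j(n,m,2)=0\}$ coming from upper-triangularity and the lower bound furnished by the previous paragraph, this forces $\pi_4^{\textup{GW}}=\ker A(1,2)$.

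The main obstacle is the rank computation on $K$-isotypic components where only some of the diagonal entries vanish: the upper-triangular bound above is tight only when the off-diagonal block is non-degenerate, and in general one might need a partial computation of the off-diagonal entries of $A_{n,m}(2)$ in the style of the $A_{6,2}(s)$ example discussed after Theorem~\ref{thm:ClosedFormulasIntertwinerEigenvalues}. The cleanest finish, however, is to avoid this entirely: once the $K$-multiplicities of $\pi_4^{\textup{GW}}$ are read off from \cite{GW96} and seen to coincide with the counts of vanishing $\mu_j$'s, the inclusion from the previous paragraph pinches both bounds together and delivers the equality.
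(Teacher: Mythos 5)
Your first step is correct and is essentially the same reduction the paper makes: from the relation $(s-1)(3s-2)A_{0,2}(s)=(\tilde s-1)(3\tilde s-2)A_{2,0}(s)$ with the normalization $A_{2,0}\equiv 1$ one gets $A_{0,2}(2)=0$, and since $\sigma(0,2)$ has multiplicity one in $I(1,2)$ and generates the irreducible $\pi_4^{\textup{GW}}$, the intertwining property yields $\pi_4^{\textup{GW}}\subseteq\ker A(1,2)$.

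The reverse inclusion is where the genuine gap sits, and it is precisely the obstacle you name without overcoming it. Your pinching argument needs the identity $\dim(\pi_4^{\textup{GW}})_{\sigma(n,m)}=\#\{j:\mu_j(n,m,2)=0\}$ for \emph{every} $K$-type of $I(1,2)$, and you only say you ``expect'' it. Two things are missing. First, the $K$-multiplicities of $\pi_4^{\textup{GW}}$ are never actually extracted from \cite{GW96} and matched against the vanishing counts from Theorem~\ref{thm:ClosedFormulasIntertwinerEigenvalues}; this is a nontrivial verification over all $(n,m)$, not a lookup. Second, even granting the counts, the bound $\dim\ker A_{n,m}(2)\leq\#\{j:\mu_j(n,m,2)=0\}$ need not be attained: the upper-triangular matrices $A_{n,m}(s)$ do have nonzero off-diagonal entries (the $A_{6,2}(s)$ computation after Theorem~\ref{thm:ClosedFormulasIntertwinerEigenvalues} exhibits one), and such an entry coupling two vanishing diagonal positions would make the kernel strictly smaller than the count. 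If the multiplicity of $\pi_4^{\textup{GW}}$ at even one $K$-type were strictly below the vanishing count, your sandwich would not close and you would be forced into the off-diagonal computation you hoped to avoid; nothing in the paper supplies those entries in general. The paper's proof circumvents both issues: it never uses the $K$-type decomposition of $\pi_4^{\textup{GW}}$ at all, but instead proves that $\ker A(1,2)$ is \emph{generated} by the $K$-type $\sigma(0,2)$. Concretely, it writes any kernel vector as $\sum_i f_i(s)T_i(s)v_i$ in terms of bases propagated from multiplicity-one $K$-types, checks via Proposition~\ref{prop:v_action} that $(T_i(1)v_i)_i$ is still a basis at $\tilde s=1$, and uses the simple zero of $A(1,s)v_i$ at $s=2$ to conclude that the coefficients $f_i$ are regular there; generation by $\sigma(0,2)$ then forces $\ker A(1,2)=\pi_4^{\textup{GW}}$, since both are the subrepresentation generated by that multiplicity-one $K$-type. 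To complete your route you would have to carry out the multiplicity comparison and the rank analysis in full; otherwise you should switch to the generation argument.
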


\begin{proof}
By Proposition~\ref{prop:QDS}, both $\ker(A(1,2))$ and $\pi_4^{\textup{GW}}$ occur as subrepresentation of $I(1,2)$. Since $\pi_4^{\textup{GW}}$ is irreducible and contains the $K$-type $(0,2)$ which occurs in $I(1,2)$ with multiplicity one, it suffices to show that $\ker(A(1,2))$ is generated by this $K$-type.

We first observe that by Corollary \ref{cor:ClosedFormulasIntertwinerMultOneKtypes} the multiplicity-one $K$-types that are contained in $\ker(A(1,2))$ are $(0,4r+2)$ with $r \geq 0$, $(2,4r)$ with $r \geq 1$, and $(4,4r+2)$ with $r \geq 1$. It is easy to see using the formulas in Lemma \ref{LieAlg-one-mult-K-types} that these $K$-types are in fact generated by the $K$-type $(0,2)$. We write $B\subseteq\{\sigma\in \hat K: 
\sigma\subseteq I(1,2)|_K \}$ for the subset of multiplicity-one $K$-types in  $\ker A(1,2)$.

Next, a careful analysis of the eigenvalues in Theorem \ref{thm:ClosedFormulasIntertwinerEigenvalues} shows that the $K$-types that appear in $\ker(A(1,2))$ are $(3m,m+2r)$ with $r \geq 1$, $(3m+2,m+2r)$ with $r \geq 2$, and $(3m+4,m+2r)$ with $r \geq 3$. We have to prove that they are generated by the $K$-type $(0, 2)$. 

Note that for generic $s$, the whole multiplicity spaces of these $K$-types are generated from the multiplicity-one $K$-types in $B$ by applying the maps $T_{n,m}^{n+3,m+1}(s)$ and $T_{n,m}^{n+3,m-1}(s)$ (see the proofs of Proposition~\ref{evprop} and Theorem~\ref{thm:ClosedFormulasIntertwinerEigenvalues}). Here, in every step, we need $T_{n,m}^{n+3,m+1}(s)$ to produce the first vectors of the relevant $K$-type, and possibly $T_{n,m}^{n+3,m-1}(s)$ for the last one. For instance, applying $T_{3,3}^{6,4}(s)\circ T_{0,2}^{3,3}(s)$ to the multiplicity-one $K$-type $(0,2)$ and $T_{3,5}^{6,4}(s)\circ T_{0,6}^{3,5}(s)$ to the multiplicity-one $K$-type $(0,6)$ generically yields a basis of the multiplicity-two $K$-type $(6,4)$.
In summary, we can find vectors $v_i$ in the multiplicity-one $K$-types in $B$ as well as linear maps $T_i(s)$ which are concatenations of the different $T_{n',m'}^{n'+3,m'+1}(s)$ and $T_{n',m'}^{n'+3,m'-1}(s)$ such that $(T_i(s)v_i)_i$ is a basis of the multiplicity space of $(n,m)$ for generic $s$, in particular around $s=2$ (but in general not for $s=2$ as some of the coefficients in $T_{n',m'}^{n'+3,m'-1}(s)$ will vanish here).

We show that $(T_i(s)v_i)_i$ is a basis of the multiplicity space of $(n,m)$ also for $s=1$. Clearly, the coefficients appearing in Proposition~\ref{prop:v_action} for $T_{n',m'}^{n'+3,m'+1}(s)$ are non-zero for $s=1$, because $2k+r(n',m')+a(n',m')+s\geq s>0$. We will do an analysis of the coefficients modulo $4$ appearing in the relevant $T_{n',m'}^{n'+3,m'-1}(s)$ to show that they also do not vanish at $s=1$. Note that we only need $T_{n',m'}^{n'+3,m'-1}(s)$ when $(n'+3) + (m'-1) \equiv 0 \mod 4$ and $a(n'+3,m'-1) = a(n',m') + 1$ is even, or $(n'+3) + (m'-1) \equiv 2 \mod 4$ and $a(n'+3,m'-1) = a(n',m') + 1$ is odd. Consider the first case, then $n'+m' \equiv 2 \mod 4$ and we want to obtain $v_{n'+3,m'-1}(s,k+1)$ with $s=1$ and $k=\frac{a(n',m')-1}{2}$. We see that, as $a(n',m')$ is odd, both $n'$ and $m'$ are odd. Now the coefficients of $v_{n'+3,m'-1}'(s,k+1)$ in $T_{n',m'}^{n'+3,m'-1}(s)v_{n',m'}(s,k)$ consist of (among others)
$$ 2s + n' - m' - 2 a(n',m') + 4k = n'-m'-2a(n',m') \equiv 2 \mod 4 $$
if $s=1$ and $k=\frac{a(n',m')-1}{2}$, hence they are non-zero. A similar analysis holds for the rest of the coefficients. It follows that $(T_i(s) v_i)_i$ is a basis of the multiplicity space of $(n,m)$ for $s=1$.

Now let $(n,m)$ be one of the $K$-types in $\ker(A(1,2))$, let $v$ be a vector in the multiplicity space of $(n,m)$ and write $v = \sum_i f_i(s) T_i(s) v_i$, where $f_i$ is a rational function in $s$, possibly singular at $s=2$. Then
$A(1, s): I(1, s)
\to I(1, \tilde s)$
and
$$ 
A(1,s) v = \sum_i f_i(s) T_i(\tilde{s}) A(1,s) v_i.$$
As $A(1,s) v_i$ is a constant multiple of $v_i$, with a zero at $s=2$ of multiplicity exactly one, and $(T_i(1) v_i)_i$ is a basis of $(n,m)$, we see that for $v \in \ker(A(1,2))$ the $f_i(s)$ cannot be singular at $s=2$. We conclude that $v = \sum_i f_i(2) T_i(2) v_i$ is actually generated by applying the Lie algebra actions $T_i(2)$ to the vectors $v_i$ in the multiplicity-one $K$-types in $B$, which are generated by the multiplicity-one $K$-type $(0,2)$ as shown above. Hence, $v$ is also contained in the subrepresentation generated by $(0,2)$. This finishes the proof.
\end{proof}

For $k\geq6$ even, it turns out that $\pi_k^{\textup{GW}}$ is a proper subrepresentation of the kernel of the intertwining operator $A(\varepsilon,s)$, where $\varepsilon \equiv s+1 \mod 2$. Here we normalize $A_{0,0}(s)\equiv1$ and $A_{2,0}(s)\equiv1$ as in Section~\ref{sec:MinrepAndDoubleLadder}. To describe this subrepresentation precisely, we need to recall the Jantzen filtration of $I(\varepsilon,s)$ (see e.g. \cite[Definition 3.7]{Vog84Unit} or \cite[Definition 7.4]{Vog94}). For this note that all induced representations $I(\varepsilon,s)$ for fixed $\varepsilon$ are realized on the same space $I(\varepsilon)=\{f\in L^2(K/L_0):f(gw)=(-1)^\varepsilon f(g)\}$.

\begin{definition}
The $n$-th level $I(\varepsilon,s)_n$ of the Jantzen filtration of $I(\varepsilon,s)$ consists of those vectors $v \in I(\varepsilon)$ such that there is an analytic function $v(z)$ around $z = s$ which has the property that $v(s) = v$ and $A(\varepsilon,z) v(z)$ vanishes to degree at least $n$ at $z = s$.
\end{definition}

An alternative way of stating Theorem~\ref{thm:LDS} is that the limit of discrete series $\pi_4^{\textup{GW}}$ is isomorphic to the first level of the Jantzen filtration $I(1,2)_1$. We now prove that the quaternionic discrete series $\pi_k^{\textup{GW}}$, $k\geq6$ even, is isomorphic to the second level of the Jantzen filtration $I(\varepsilon,s)_2$, where $s=\frac{k}{2}$ and $\varepsilon\equiv \frac{k}{2}+1\pmod2$. The strategy is the same as in the proof of Theorem~\ref{thm:LDS}, but
now the Jantzen filtration of $I(\varepsilon, s)$ for $s\ge 3$ is longer than that for $s=2$, so additional considerations are necessary.

Fix $k\geq6$ and let $s=\frac{k}{2}$, $\varepsilon=\frac{k}{2}+1$. We know that the $K$-types in $I(\varepsilon,s)$ are of the form $(3m,m+2r)$, $(3m+2,m+2r)$, $(3m+4,m+2r)$ and $(3m+2r,m)$ with $r,m\in\ZZ_{\geq0}$.
\begin{enumerate}[label=(\Alph*)]
    \item\label{CaseA} The $K$-types $(3m+j,m+2r)$ ($j=0,2,4$) are generically obtained from the multiplicity-one $K$-types
    $B = \{ (j,a) \}_{2r \leq a \leq 2(r+m), a \ \mathrm{even}, \ \frac{a}{2} \equiv s \mod 2 }$. Note that the multiplicity-one $K$-type $(j,a)$ only appears in $I(\varepsilon,s)$ when $j+a \equiv 2 (s+1) \mod 4$. Here we only need the maps $T_{n',m'}^{n'+3,m'+1}(z)$, and for the last basis vector also $T_{n',m'}^{n'+3,m'-1}(z)$. As in the proof of Theorem~\ref{thm:LDS}, we choose for a $K$-type $(3m+j,m+2r)$ vectors $v_i$ in the multiplicity-one $K$-types in $B$ as well as linear maps $T_i(z)$ which are concatenations of maps of the form $T_{n',m'}^{n'+3,m'+1}(z)$ and $T_{n',m'}^{n'+3,m'+1}(z)$, such that $(T_i(z)v_i)_i$ is a basis of the multiplicity space of $(3m+j,m+2r)$ for generic $z\in\CC$.
    \item\label{CaseB} For the $K$-types $(3m+2r,m)$, we only need the maps $T_{n',m'}^{n'+3,m'+1}(z)$ and $T_{n',m'}^{n'+1,m'+1}$ and the multiplicity-one $K$-types $C = \{ (a,0) \}_{2r \leq a \leq 2(r+m), a \ \mathrm{even}, \ \frac{a}{2} \equiv s \mod 2 }$. In a similar way, we let $(T_i(z)v_i)_i$ be a basis of the multiplicity space of $(3m+2r,m)$ for generic $z$.
\end{enumerate}

Let us first analyze the $K$-types from (A).

\begin{lemma}
\label{CountZero}
For the $K$-types $(3m,m+2r)$, $(3m+2,m+2r)$ and $(3m+4,m+2r)$, the vectors $T_i(\tilde{z})v_i$ defined in \ref{CaseA} have a zero at $z=s$ of multiplicity at most one, and this can only happen when $r \leq s-3$ for $(3m,m+2r)$, $r \leq s-4$ for $(3m+2,m+2r)$ and $r \leq s-5$ for $(3m+4,m+2r)$.
\end{lemma}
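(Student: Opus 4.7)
The plan is to decompose $T_i(\tilde z)$ into its elementary transitions from Proposition~\ref{prop:v_action} and count the linear-in-$\tilde z$ factors that vanish at $\tilde z = \tilde s := 3-s$. Fix one of the three target $K$-types $(3m+j, m+2r)$ with $j \in \{0,2,4\}$ and a starting vector $v_i = v_{j, 2r+2t}(z, 0)$ for some $0 \le t \le m$ with $(j, 2r+2t) \in B$. The path from $(j, 2r+2t)$ to $(3m+j, m+2r)$ consists of $m-t$ up-up moves $T^{n'+3, m'+1}_{n', m'}$ and $t$ up-down moves $T^{n'+3, m'-1}_{n', m'}$. Since each elementary move multiplies a basis vector by a factor linear in $\tilde z$ (cf. \eqref{prop:v_action1}--\eqref{prop:v_action6}), the order of vanishing of $T_i(\tilde z) v_i$ at $\tilde z = \tilde s$ is bounded above by the number of such factors that vanish there.

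The at-most-one assertion would then follow from a monotonicity argument. Along the up-up subpath with fixed $k$-index, the factor contributed by the transition $(n',m') \to (n'+3, m'+1)$ equals $2k + r(n',m')+a(n',m')+\tilde z$ by \eqref{prop:v_action1}--\eqref{prop:v_action2}. A direct calculation using the explicit formula for $a(n,m)$ shows that $a(n'+3, m'+1) - a(n', m') = 1$ throughout the path, so the sum $r(n',m')+a(n',m') = \tfrac{n'+m'}{2} - a(n',m')$ strictly increases by $1$ at each up-up step. Consequently the equation $s = 2k + r(n',m')+a(n',m')+3$ has at most one solution along this subpath. A parallel monotonicity argument based on \eqref{prop:v_action5}--\eqref{prop:v_action6} handles the up-down subpath, and a direct comparison of the two sets of vanishing values rules out simultaneous vanishing across the two segments for $s = k/2 \ge 3$.

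For the explicit bounds on $r$, I would inspect the first step of the path, where the sum $r(n',m')+a(n',m')$ attains its minimum. At the starting position $(j, 2r+2t)$ one computes $a(0,2r+2t)=a(2,2r+2t)=a(4,2r+2t)=0$ (the last because $4 \equiv 1 \pmod 3$ forces $a(4,\cdot)=[4/3]-1=0$), giving the first-step factor $r+t+3-s$ for $j=0$, $r+t+4-s$ for $j=2$, and $r+t+5-s$ for $j=4$. Each vanishes at $\tilde z = \tilde s$ for a specific value of $r+t$; combined with the parity condition $j + 2r + 2t \equiv 2(s+1) \pmod 4$ required for $(j, 2r+2t) \in B$, a short case check shows that $t=0$ is admissible in all three cases. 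This yields the bounds $r \le s-3$, $r \le s-4$, and $r \le s-5$ respectively, and the strict monotonicity of the sum along the remainder of the path confirms that these are genuine upper bounds.

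The main obstacle will be the branching in \eqref{prop:v_action6}: an up-down move applied to a primed vector $v'_{n',m'}(z, k)$ produces two basis vectors with $k$-indices $k$ and $k+1$. To preserve the upper bound on the order of vanishing, it suffices to verify that at least one of the two resulting branches carries no more than one vanishing factor. Since the branches differ only by the shift $k \mapsto k+1$ in the subsequent up-up coefficients, which translates the vanishing locus by $2$ in $s$, the same monotonicity argument resolves the branching on the appropriate branch.
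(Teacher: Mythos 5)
Your overall strategy --- decomposing $T_i(\tilde z)$ into elementary transitions and bounding the order of vanishing at $z=s$ by the number of linear factors vanishing there --- is the paper's strategy, and your analysis of the up--up steps is essentially the paper's: the quantity $2k+r(n',m')+a(n',m')$ is non-decreasing along the path (it increases by one at each up--up step, is preserved by up--down steps, and $k$ only ever increases), so at most one up--up factor from \eqref{prop:v_action1}--\eqref{prop:v_action2} vanishes at $\tilde z=\tilde s$, and the minimal such factor $\frac{j}{2}+r+t+3-s$ gives exactly the stated bounds on $r$.

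The gap is in your treatment of the up--down transitions. The paper does not merely bound the number of vanishing up--down factors; it proves they \emph{never} vanish at $\tilde z=\tilde s$, by a congruence argument: the parity constraint defining $B$ (equivalently, the requirement that $(j,2r+2t)$ occur in $I(\varepsilon,\cdot)$ with $\varepsilon\equiv s+1$) forces the relevant coefficients in \eqref{prop:v_action5}--\eqref{prop:v_action6} to be $\equiv 2\pmod 4$ at $\tilde s$. Your ``monotonicity plus comparison'' substitute is insufficient for two reasons. First, each application of \eqref{prop:v_action6} contributes several distinct linear factors ($2\tilde z+n-m-2a+4k$, $2\tilde z+n+m-2a+4k$, $2\tilde z+n+m-2a+4k+2$, plus denominator factors), so there is no single monotone quantity whose injectivity limits the count to one. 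Second, and more seriously, your bounds on $r$ are derived exclusively from the up--up factors; if an up--down factor such as $2\tilde z+n-m-2a(n,m)+4k-2$ were permitted to vanish, the resulting condition $s=2+\frac{n-m}{2}-a+2k$ only yields $r\le\frac{j}{2}+t+2-s$, which for $t$ close to $m$ and $m$ large does not imply $r\le s-3-\frac{j}{2}$, so the ``can only happen when'' half of the lemma would remain unproven. You therefore cannot avoid verifying the non-vanishing of the up--down coefficients modulo $4$; that is precisely where the parity conditions on $B$ and on $s$ enter the proof, and it is the step your proposal is missing.
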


\begin{proof}
We prove everything for $s$ odd, the statements for $s$ even are proven similarly. First we see that $T_{n',m'}^{n'+3,m'-1}(\tilde{z})$ is only needed in $T_i(\tilde z)$ for a step when $(n'+3) + (m'-1) \equiv 0 \mod 4$ and $a(n'+3,m'-1) = a(n',m') + 1$ is even, or when $(n'+3) + (m'-1) \equiv 2 \mod 4$ and $a(n'+3,m'-1) = a(n',m') + 1$ is odd. We then see that the coefficients for $T_{n',m'}^{n'+3,m'-1}(\tilde{s})$ are not zero by the following argument. First consider $(n'+3) + (m'-1) \equiv 0 \mod 4$ and $a(n'+3,m'-1) = a(n',m') + 1$ is even, then we need to create $v_{n'+3,m'-1}(\tilde{s},\frac{a(n',m')+1}{2})$. We see then $m'$ is odd and we see that $2 \tilde{s} + n' - m' - 2 a(n',m') + 2 a(n',m') + 2 \equiv 2 \mod 4$. A similar analysis holds for the other coefficients and the case $(n'+3) + (m'-1) \equiv 2 \mod 4$ and $a(n'+3,m'-1) = a(n',m') + 1$ is odd.

Now we look at zeros which are created by $T_{n',m'}^{n'+3,m'+1}(\tilde{z})$ at $z = s$. We see that we only get possibly one zero for the coefficient at $v_{n',m'}(\tilde{z},k)$. This only appears for $(3m,m+2r)$ when $r \leq s-3$, $(3m+2,m+2r)$ when $r \leq s-5$ and $(3m+2,m+2r)$ when $r \leq s-5$.
\end{proof}

Now we prove a similar result for the $K$-types in (B).

\begin{lemma}
\label{CountZero2}
For the $K$-types $(3m+2r,m)$, the vectors $T_i(\tilde{z})v_i$ defined in \ref{CaseB} have a zero at $z=s$ of multiplicity at most one, and this can only happen when $r \leq s-3$.
\end{lemma}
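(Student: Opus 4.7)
The plan is to adapt the strategy of the proof of Lemma \ref{CountZero} to setup \ref{CaseB}. Here the building blocks of the $T_i(\tilde z)$ are the maps $T_{n',m'}^{n'+3,m'+1}(\tilde z)$ and $T_{n',m'}^{n'+1,m'+1}(\tilde z)$, whose entries are given by \eqref{prop:v_action1}--\eqref{prop:v_action2} and \eqref{prop:v_action3}--\eqref{prop:v_action4} respectively. I would need to identify which factors can vanish at $z=s$, show that at most one does so along any single path, and verify that such a vanishing forces $r \leq s-3$.

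First, I would isolate the linear factors in $\tilde z$ that can vanish. Each step $T_{n',m'}^{n'+3,m'+1}(\tilde z)$ introduces the factor $(2k+r(n',m')+a(n',m')+3-z)$ by \eqref{prop:v_action1}--\eqref{prop:v_action2}, which vanishes at $z=s$ precisely when $s=2k+r(n',m')+a(n',m')+3$. Each step $T_{n',m'}^{n'+1,m'+1}(\tilde z)$ contributes the factor $(3\tilde z+r(n',m')-a(n',m')+m'+6k-3)$ by \eqref{prop:v_action3}--\eqref{prop:v_action4}, together with the auxiliary factors $(4k+r(n',m')+\tilde z)(4k+r(n',m')+\tilde z+2)$ appearing in the second summand of \eqref{prop:v_action4}. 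The prefactors $(6k-3a(n',m')+n')$ and $(6k-3a(n',m')+n'+3)$ are strictly positive along every path in setup \ref{CaseB} because $n'>3a(n',m')$ at every step, so they never contribute zeros.

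Next, I would bound the multiplicity of the zero along a single path. The $m+1$ paths reaching $(3m+2r,m)$ are indexed by $j\in\{0,\ldots,m\}$: path $j$ begins at the multiplicity-one $K$-type $(2(m+r-j),0)\in C$ and applies $T^{+3,+1}$ precisely $j$ times and $T^{+1,+1}$ precisely $m-j$ times, in the order forced by the recursions of Proposition \ref{prop:v_action} so as to land on the intended basis vector of $(3m+2r,m)$. Along such a path the triple $(a(n',m'),r(n',m'),k)$ evolves in a controlled way: $k$ is bounded by $a(n',m')/2$ and $a(n',m')$ grows at most by one per step. Consequently, the linear expressions in $s$ produced at different steps take distinct integer values, and a short case analysis modulo $2$ and modulo $3$, analogous to the one used at the end of the proof of Lemma \ref{CountZero}, rules out simultaneous vanishings of factors from $T^{+3,+1}(\tilde z)$ and $T^{+1,+1}(\tilde z)$.

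Finally, the bound $r\leq s-3$ follows from locating the smallest $s$ at which any factor can vanish along the path from $(2(m+r-j),0)$ to $(3m+2r,m)$. The very first step $T^{+3,+1}(\tilde z)$ applied at $k=0$ with source $(2(m+r-j),0)$ contributes the factor $(m+r-j)+3-s$, so vanishing there requires $s\geq(m+r-j)+3\geq r+3$ (the minimum being attained at $j=m$). All subsequent $T^{+3,+1}(\tilde z)$ factors along the path have source with larger value of $r(n',m')+a(n',m')+2k$, and hence require even larger $s$ to vanish; a parallel estimate after dividing by $3$ applies to the factors coming from $T^{+1,+1}(\tilde z)$. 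The main obstacle will be the combinatorial bookkeeping needed to verify, case by case according to $n'\bmod 3$ and the parity of $a(n',m')$, that two distinct steps on the same path cannot contribute simultaneous zeros; this mirrors the congruence check carried out at the end of the proof of Lemma \ref{CountZero} and is tedious but elementary.
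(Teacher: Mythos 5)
Your overall strategy (decomposing each $T_i(\tilde z)$ into a path of $T^{+3,+1}$ and $T^{+1,+1}$ steps, listing the linear factors that can vanish at $z=s$, and running congruence arguments) is the same as the paper's, but two concrete points in your factor analysis do not hold up. First, your inventory of the factors contributed by a $T^{+1,+1}$ step is off: the expressions $(4k+r(n',m')+\tilde z)(4k+r(n',m')+\tilde z+2)$ sit in the \emph{denominator} of the second summand of \eqref{prop:v_action4}, so they produce poles rather than zeros, while you omit the numerator factors $(2k+r(n',m')+a(n',m')+\tilde z)(2k+r(n',m')+a(n',m')+\tilde z+1)$ and $(3\tilde z+r(n',m')-a(n',m')+m'+6k)$ of that same summand. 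The first of these omitted factors, which for the relevant index $k=\tfrac{a(n',m')-1}{2}$ equals $\tfrac{n'+m'}{2}-1+\tilde z$, is precisely the one that \emph{does} vanish and accounts for the zeros coming from the $T^{+1,+1}$ steps; it is also what yields $s=\tfrac{n'+m'}{2}+2\geq r+3$ in that case. Without it, both the "multiplicity at most one" count and the bound are incomplete.

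Second, your proposed derivation of $r\leq s-3$ for the $T^{+1,+1}$ contributions -- "a parallel estimate after dividing by $3$" applied to $(3\tilde z+r(n',m')-a(n',m')+m'+6k-3)$ -- fails. That factor vanishes when $s=2+\tfrac13\bigl(\tfrac{n'+m'}{2}-3a(n',m')+m'+6k\bigr)$, and for a path starting at a multiplicity-one type $(n_0,0)$ with $n_0$ of order $2r$ this value is roughly $2+\tfrac{r}{3}$, far below $r+3$; a size estimate cannot exclude it. What actually excludes it is a parity argument: since the starting type $(n_0,0)$ must lie in $I(\varepsilon,s)$ one has $\tfrac{n_0}{2}\equiv s+1\pmod 2$, and a computation modulo $2$ then shows that $3\tilde s+r(n',m')-a(n',m')+m'+6k-3$ is an odd integer at every step of the path, hence nonzero. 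This is exactly the congruence analysis the paper carries out in the first paragraph of its proof; you invoke congruences only to rule out \emph{simultaneous} vanishing of two factors, not to kill the vanishing of this factor altogether, and that is where the bound $r\leq s-3$ would break in your write-up. With the corrected factor list and the parity argument in place, the rest of your plan (monotonicity of $2k+r(n',m')+a(n',m')$ along a path to bound the multiplicity, and the evaluation of the smallest admissible $s$) goes through.
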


\begin{proof}
Let $s$ be odd, the $s$ is even case is done by a similar analysis. When $r \geq s-1$ we immediately see that the coefficients of the appearing $T_{n',m'}^{n'+3,m'+1}(\tilde{s})$ are non-zero. The same is true for the coefficients in $T_{n',m'}^{n'+1,m'+1}(\tilde{s})$ by the following argument. We see that for $(n'+1,m'+1)$ we need $T_{n',m'}^{n'+1,m'+1}$ when $(n'+1)+(m'+1) \equiv 0 \mod 4$ and $a(n'+1,m'+1) = a(n',m')+1$ is even, and when $(n'+1)+(m'+1) \equiv 2 \mod 4$ and $a(n'+1,m'+1) = a(n',m')+1$ is odd. We study $(n'+1)+(m'+1) \equiv 0 \mod 4$ and $a(n'+1,m'+1) = a(n',m')+1$ is even, then we see that we need $T_{n',m'}^{n'+1,m'+1}(\tilde{s}) v_{n',m'}'(\tilde{s},\frac{a(n',m')-1}{2})$. We see that $3 \tilde{s} + r(n',m')-a(n',m')+m'+6 \frac{a(n',m')-1}{2} = 3 \tilde{s} + \frac{n'+m'}{2} + m' - 3 \equiv 1 \mod 2$, so non-zero. The other cases work in a similar way.

Now we study the case $r \leq s-3$ at $z = s$. Since the Lie algebra action is affine linear in $s$, the vectors $T_i(\tilde{z}) v_i$ depend polynomially on $z$. If there is a zero at $z=s$, this either comes from having applied a map of the form $T_{n',m'}^{n'+3,m'+1}(\tilde{z})$ or $T_{n',m'}^{n'+1,m'+1}(\tilde{s})$. If it comes from $T_{n',m'}^{n'+3,m'+1}(\tilde{z})$, it must be that some $(2k + r(n',m') + a(n',m') + \tilde{z}) v_{n'+3,m'+1}(\tilde{z},k)$ gives a zero at $z = s$ as a coefficient of $v_{n'+3,m'+1}(\tilde{z},k)$. Applying more transformations of the form $T_{n'',m''}^{n''+3,m''+1}(\tilde{z})$ and $T_{n'',m''}^{n''+1,m''+1}(\tilde{z})$, we cannot get more zeros as coefficients of $v_{n'',m''}(\tilde{z},k)$ at $z = s$, which can be seen by comparing these coefficients to $(2k + r(n',m') + a(n',m') + \tilde{z})$ from before.

A zero can also arise from applying $T_{n',m'}^{n'+1,m'+1}(\tilde{z})$. By the above argument and by an analysis of the coefficients modulo $2$, we must have that $n'+m' \equiv 2 \mod 4$, $a(n',m')$ odd and $(\frac{n'+m'}{2} + \tilde{s} - 1) = 0$, so we get $(\frac{n'+m'}{2} + \tilde{z} - 1) p(z)$ as the coefficient for $v_{n'+1,m'+1}(\tilde{z}, \frac{a(n',m')+1}{2})$, where $p(z)$ has no zero at $z = s$. By a similar analysis as before, we do not get higher multiplicities of our zeros.
\end{proof}

\begin{corollary}
\label{zerocoro}
The elements $T_i(z)v_i$ forming a basis of the multiplicity space of a $K$-type $(n,m)$ have zeros of degree at most $1$, and this can only happen when $v_i$ in the multiplicity-one subspace is not in the kernel of $A(\varepsilon,s)$.
\end{corollary}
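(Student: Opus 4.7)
The plan is to reduce the assertion to the content already established in Lemmas \ref{CountZero} and \ref{CountZero2}, and then to match the $r$-regimes appearing in those lemmas against the closed formulas of Corollary \ref{cor:ClosedFormulasIntertwinerMultOneKtypes} in order to identify them with the condition $v_i \notin \ker A(\varepsilon,s)$. The first half of the assertion -- that any zero of $T_i(\tilde z)v_i$ at $z = s$ has order at most one -- is literally the conclusion of the two preceding lemmas, once we split the ambient $K$-type $(n,m)$ into the four families appearing in cases (A) and (B) of the preamble to Lemma \ref{CountZero}. So no further work is required for that half.

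For the second half I would proceed as follows. Each basis vector $T_i(z)v_i$ is generated from some multiplicity-one $v_i$ living either in the set $B = \{(j, 2(r+\ell)) : 0 \le \ell \le m\}$ with $j \in \{0,2,4\}$, or in the set $C = \{(2(r+\ell),0) : 0 \le \ell \le m\}$, depending on whether $(n,m)$ is of type $(3m+j, m+2r)$ or $(3m+2r,m)$. The intertwining operator $A(\varepsilon, s)$ acts on such a $v_i$ by a scalar $A_{j,2(r+\ell)}(s)$ or $A_{2(r+\ell),0}(s)$, and these scalars are given in closed form as ratios of Pochhammer symbols in Corollary \ref{cor:ClosedFormulasIntertwinerMultOneKtypes}. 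Evaluating these products at $s = k/2$ with $\tilde s = 3 - k/2$, one identifies exactly which Pochhammer factors produce a zero, and thus which $v_i$ lie in $\ker A(\varepsilon, s)$.

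I would then compare, family by family, the resulting $r$-thresholds for $v_i$ to be in the kernel against the bounds $r \le s-3$, $r \le s-4$, $r \le s-5$ from Lemma \ref{CountZero} and $r \le s-3$ from Lemma \ref{CountZero2}. The key observation to verify is that these two conditions are complementary: the inequality on $r$ that allows a zero of $T_i(\tilde z)v_i$ at $z = s$ excludes precisely those parameters for which the scalar $A_{j,2(r+\ell)}(s)$ (or $A_{2(r+\ell),0}(s)$) vanishes. Put the other way, within the regime where $T_i(\tilde z)v_i$ can degenerate at $z = s$, the generating vector $v_i$ lies outside $\ker A(\varepsilon, s)$, which is precisely the content of the corollary.

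The main obstacle is purely bookkeeping. One has to track the four separate $K$-type families of (A) and (B) under the parity conventions $\varepsilon \equiv s+1 \pmod 2$ and $\tfrac{n+m}{2}\equiv \varepsilon \pmod 2$, and align the vanishing locus of each Pochhammer factor in Corollary \ref{cor:ClosedFormulasIntertwinerMultOneKtypes} with the sharp inequality on $r$ coming from the corresponding lemma. Since no new technique is required beyond the formulas already at hand, the verification is mechanical, but some care is needed because the Pochhammer symbols involve half-integers when $s$ is a positive integer, and the correct factor producing the zero differs between the $j \in \{0,2,4\}$ cases.
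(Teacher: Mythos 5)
Your overall route is exactly the paper's: the bound on the order of vanishing is taken verbatim from Lemmas~\ref{CountZero} and \ref{CountZero2}, and the second assertion is reduced to an index comparison with the Pochhammer formulas of Corollary~\ref{cor:ClosedFormulasIntertwinerMultOneKtypes}. However, the comparison you single out as the ``key observation'' is the wrong one, and taken literally it fails. The inequalities $r\le s-3$, $r\le s-4$, $r\le s-5$ (and $r\le s-3$ in case (B)) constrain the parameter $r$ of the \emph{ambient} $K$-type $(3m+j,m+2r)$, whereas the kernel condition extracted from Corollary~\ref{cor:ClosedFormulasIntertwinerMultOneKtypes} constrains the parameter $r+\ell$ of the multiplicity-one $K$-type $(j,2(r+\ell))$ containing $v_i$ (one finds $v_i\in\ker A(\varepsilon,s)$ iff $r+\ell\ge s-1$, $\ge s$, $\ge s+1$ for $j=0,2,4$). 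These two regions are not complementary in the pair $(r,\ell)$: for instance $r=0\le s-3$ is perfectly compatible with $\ell\ge s-1$, in which case $v_i$ \emph{does} lie in the kernel. So the statement ``the inequality on $r$ that allows a zero excludes precisely those parameters for which $A_{j,2(r+\ell)}(s)$ vanishes'' is false, and a verification organized around it would not close the argument.

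What the ``careful comparison of indices'' actually requires is to localize the (unique, simple) zero inside the chain $T_i(\tilde z)$, using the proofs rather than just the statements of the two lemmas: the zero can only arise from a coefficient $2k+r(n',m')+a(n',m')+\tilde z$ of an intermediate step $T_{n',m'}^{n'+3,m'+1}(\tilde z)$, and along the chain issuing from $(j,2(r+\ell))$ one checks that $2k+r(n',m')+a(n',m')\ge r+\ell-1+c_j$ with $c_j=0,1,2$ for $j=0,2,4$. Hence vanishing at $z=s$ forces $r+\ell\le s-2-c_j$ \emph{for that particular index $i$}, which is strictly below the corresponding kernel threshold; the analogous bookkeeping handles case (B). In other words, the comparison must be run for the generating $K$-type of the specific basis vector carrying the zero, not for the ambient $K$-type. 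With that correction your plan coincides with the paper's proof; without it, the crucial implication ``zero $\Rightarrow v_i\notin\ker A(\varepsilon,s)$'' is not established.
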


\begin{proof}
This follows directly from Lemma~\ref{CountZero}, Lemma~\ref{CountZero2} and Corollary~\ref{cor:ClosedFormulasIntertwinerMultOneKtypes} by carefully comparing the indices.
\end{proof}

\begin{theorem}\label{thm:QDS}
Let $k\geq6$ be even and let $s=\frac{k}{2}$, $\varepsilon\equiv\frac{k}{2}+1\pmod2$. Then the second level $I(\varepsilon,s)_2$ of the Jantzen filtration of $I(\varepsilon,s)$ is isomorphic to the quaternionic discrete series representation $\pi_k^{\textup{GW}}$.
\end{theorem}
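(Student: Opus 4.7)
The plan is to establish the two inclusions $\pi_k^{\textup{GW}} \subseteq I(\varepsilon,s)_2$ and $I(\varepsilon,s)_2 \subseteq \pi_k^{\textup{GW}}$ separately, adapting the blueprint of the proof of Theorem~\ref{thm:LDS} from order-one to order-two vanishing of the intertwining operator $A(\varepsilon,z)$ at $z=s$.

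For the first inclusion, by Proposition~\ref{prop:QDS} we already have $\pi_k^{\textup{GW}} \subseteq I(\varepsilon,s)$. Since $\pi_k^{\textup{GW}}$ is irreducible with lowest $K$-type $(0,k-2)$ and since $I(\varepsilon,s)_2$ is $G$-invariant, it suffices to verify that a generator of the multiplicity-one $K$-type $(0,k-2)$ lies in $I(\varepsilon,s)_2$. This reduces to showing that the scalar $A_{0,k-2}(z)$ from Corollary~\ref{cor:ClosedFormulasIntertwinerMultOneKtypes} vanishes to order at least two at $z=s=k/2$. Writing $k=4r$ or $k=4r+2$ and inspecting the relevant formula, the Pochhammer symbol $(\tilde{s}/2)_r^2$ (in the first case) respectively the product $(\frac{\tilde{s}+1}{2})_r(\frac{\tilde{s}-1}{2})_{r+1}$ (in the second case) contributes a double zero at $\tilde{s}=3-k/2$, while the remaining factors in the numerator and denominator are regular and non-zero; this yields the desired order-two vanishing.

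For the reverse inclusion, the strategy parallels that of Theorem~\ref{thm:LDS}. First, using Corollary~\ref{cor:ClosedFormulasIntertwinerMultOneKtypes} I would enumerate all multiplicity-one $K$-types $\sigma(n,m)$ in $I(\varepsilon,s)$ on which $A_{n,m}(z)$ vanishes to order at least two at $z=s$, and verify via Lemma~\ref{LieAlg-one-mult-K-types} that each of them is generated by the $K$-type $(0,k-2)$ under the Lie algebra action. Next, for a higher-multiplicity $K$-type $(n,m)$ occurring in $I(\varepsilon,s)_2$, I would employ the basis $(T_i(s)v_i)_i$ constructed in cases \ref{CaseA} and \ref{CaseB}, where each $v_i$ lies in a multiplicity-one $K$-type. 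Given $v=\sum_i f_i(s) T_i(s) v_i \in I(\varepsilon,s)_2$ with analytic continuation $v(z)=\sum_i f_i(z) T_i(z) v_i$, the factorisation $A(\varepsilon,z) T_i(z) v_i = T_i(\tilde{z}) A(\varepsilon,z) v_i$ together with Corollary~\ref{zerocoro} (which bounds the order of vanishing of $T_i(\tilde{z})v_i$ by one, attained only when $v_i\notin\ker A(\varepsilon,s)$) forces constraints on the orders of the rational coefficients $f_i$. In particular, summands corresponding to $v_i$ that do \emph{not} lie in the multiplicity-one part of $I(\varepsilon,s)_2$ cannot contribute without producing singularities in $v(z)$ itself, so $v$ must lie in the submodule generated by the multiplicity-one $K$-types of $I(\varepsilon,s)_2$ and hence in $\pi_k^{\textup{GW}}$.

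The main obstacle will be the order-of-vanishing bookkeeping in the last step. Because $I(\varepsilon,s)_2$ requires order-two vanishing rather than merely order one, there is a priori the possibility of non-trivial cancellations among the summands $f_i(z) T_i(\tilde{z}) A(\varepsilon,z) v_i$ of differing vanishing orders, which could in principle produce vectors in $I(\varepsilon,s)_2$ outside $\pi_k^{\textup{GW}}$. Ruling out such cancellations requires a careful parity-based case analysis on $(n,m)$ modulo $4$ along with the structure of $a(n,m)$, very much in the spirit of Lemmas~\ref{CountZero} and \ref{CountZero2}, together with an explicit check (analogous to the closing argument of the proof of Theorem~\ref{thm:LDS}) that the family $(T_i(s)v_i)_i$ remains linearly independent at the special value $z=s=k/2$ for every relevant $K$-type.
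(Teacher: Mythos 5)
Your proposal is correct and follows the paper's own proof essentially step for step: the inclusion $\pi_k^{\textup{GW}}\subseteq I(\varepsilon,s)_2$ via Proposition~\ref{prop:QDS} and the double zero of $A_{0,k-2}(z)$ at $z=s$ (only note that your two congruence cases are interchanged: the factor $(\tilde{s}/2)_r^2$ arises when $k-2=4r$, i.e.\ $k\equiv2\pmod 4$, and $(\tfrac{\tilde{s}+1}{2})_r(\tfrac{\tilde{s}-1}{2})_{r+1}$ when $k-2=4r+2$), and the reverse inclusion by expanding $v(z)=\sum_i f_i(z)T_i(z)v_i$, applying the intertwining relation, and using Corollary~\ref{zerocoro} to force every $v_i$ whose coefficient $f_i$ is non-vanishing and pole-free at $z=s$ into the multiplicity-one part of $I(\varepsilon,s)_2$. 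The cancellation obstacle you flag at the end is precisely what Lemmas~\ref{CountZero} and~\ref{CountZero2} and Corollary~\ref{zerocoro} were established for, so the paper closes the argument simply by invoking them rather than by any further analysis.
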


\begin{proof}
Since $I(\varepsilon,s)_2$ and $\pi_k^{\textup{GW}}$ are both subrepresentations of $I(\varepsilon,s)$ and share the multiplicity-one $K$-type $(0,k-2)$, it suffices to show that this $K$-type generates $I(\varepsilon,s)_2$. So let $v\in I(\varepsilon,s)_2$ in the multiplicity space of $(n,m)$, then there exists an analytic function $v(z)$ defined around $z=s$ in $(n,m)$ such that $v(z) = v$ and $A(\varepsilon, z) v(z)$ has a zero of order $2$. We can write $v(z) = \sum_i f_i(z) T_i(z) v_i$, where $f_i$ is a rational function in $z$, because the $T_i(z) v_i$ are generically a basis. Then
$$ A(\varepsilon,z) v(z) = \sum_i f_i(z) T_i(\tilde{z}) A(\varepsilon, z) v_i.$$ 
Note that $A(\varepsilon,z) v_i$ is a constant multiple of $v_i$ for every $i$. Since the left hand side vanishes of order two at $z=s$ and the vectors $T_i(\tilde{z})v_i$ generically form a basis, every term $f_i(z)T_i(\tilde{z})A(\varepsilon,z)v_i$ has to vanish of order two at $z=s$. So if some $f_i(z)$ does not vanish at $z=s$, the term $T_i(\tilde{z})A(\varepsilon,z)v_i$ has to vanish of order at least two. But by Corollary~\ref{zerocoro} this can only happen if $v_i$ is contained in $I(\varepsilon,s)_2$, i.e. $v_i$ is in $(0,2r)$ with $r \geq s-1$, $(2,2r)$ with $r \geq s$, or $(4,2r)$ with $r \geq s+1$ (see Corollary~\ref{cor:ClosedFormulasIntertwinerMultOneKtypes}). Moreover, $f_i(z)$ cannot have a pole at $z=s$, otherwise $T_i(\tilde{z})A(\varepsilon,z)v_i$ would have a zero of order at least three, and since $A(\varepsilon,z)v_i$ vanishes of order at most two, $T_i(\tilde{z})v_i$ would have to vanish as well, which is impossible by Corollary~\ref{zerocoro}. It follows that $v=v(s)=\sum_i f_i(s)T_i(s)v_i$ is generated by the $v_i$ which are contained in the multiplicity-one $K$-types in $I(\varepsilon,s)_2$. As in the proof of Theorem~\ref{thm:LDS} it is easy to see using Lemma~\ref{LieAlg-one-mult-K-types} that these multiplicity-one $K$-types are in fact generated by the $K$-type $(0,k-2)$, so this $K$-type generates $I(\varepsilon,s)_2$.
\end{proof}

\begin{remark}
For the sake of completeness, we would like to point out that at the integer points $s\in\ZZ_{\geq3}$, $\varepsilon \equiv s + 1\pmod 2$, there is a sequence of subrepresentations
$$ \{0\}\subseteq I(\varepsilon,s)_2 \subseteq I(\varepsilon,s)_1\subseteq I(\varepsilon,s).$$
Here $I(\varepsilon,s) / I(\varepsilon,s)_1 \cong V_{(s-3) \omega_{\beta}}$, the finite dimensional representation of $G$ with highest weight $(s-3) \omega_{\beta}$, where $\omega_{\beta} = 3 \alpha + 2 \beta$ is the fundamental weight associated to $\beta$. $I(\varepsilon,s)_2$ is isomorphic to the quaternionic discrete series representation $\pi_k^{\mathrm{GW}}$. This can be seen using e.g. the embedding of finite-dimensional representations into induced representations as in \cite[Lemma 8.5.7]{Wallach73}.
\end{remark}

\appendix

\section{$\fsu(2)$ tensor products}\label{appendix}

We recall the tensor product decomposition of representations of $\SU(2)$. For this, we realize the unique irreducible $(m+1)$-dimensional representation $\Gamma_m$ on the space of polynomials $f:\CC\to\CC$ of degree $\le m$ with the group action
$$ (g\cdot f)(z) = (-cz+d)^{m} f((az-b)(-cz+d)^{-1}), \qquad z\in\CC,g=\begin{pmatrix}a&b\\c&d\end{pmatrix}^{-1}\in\SU(2). $$
The invariant norm $\Vert\cdot\Vert_m$ on $\Gamma_m$ is given by
$$
\Vert f\Vert_m^2=C_m \int_{\mathbb C} |f(z)(1+|z|^2)^{-\frac{m}{2}}|^2 
(1+|z|^2)^{-2}dm(z),
$$
where $dm(z)$ denotes Lebesgue measure on $\CC\simeq\RR^2$ and the constant $C_m=\frac{m+1}{\pi}>0$ is chosen such that $\Vert 1\Vert_m=1$. Formulas for the $\SL(2,\mathbb{R})$-case appeared in \cite{Za94}. The norm of the monomial $z^j$ is given by
\begin{equation}\label{eq:NormMonomial}
    \Vert z^k\Vert_m^2 = {m\choose k}^{-1},
\end{equation} 
and hence the reproducing kernel of $\Gamma_m$ is the function
$$
(z,w)\mapsto\sum_{k=0}^m \binom{m}k (z\bar w)^k = (1+z\bar w)^m.
$$

We write the monomial basis $\{1, z, \ldots, z^m\}$ as $\{\xi_m^m,\xi_{m-2}^m,\ldots,\xi_{-m}^m\}$ with
\begin{equation}
  \label{xi_a}
\xi_a^m(z) = z^{\frac{m-a}2}, \quad -m \le a\le m,   a\equiv m 
\pmod 2.
\end{equation}
Then a short computation shows that
\begin{equation*}
    d\Gamma_m\begin{pmatrix}1&0\\0&-1\end{pmatrix}\xi_a^m = a\xi_a^m, \qquad d\Gamma_m\begin{pmatrix}0&1\\0&0\end{pmatrix}\xi_a^m = \frac{m-a}{2}\xi_{a+2}^m, \qquad d\Gamma_m\begin{pmatrix}0&0\\1&0\end{pmatrix}\xi_a^m = \frac{m+a}{2}\xi_{a-2}^m.
\end{equation*}
In particular, $\xi_m^m=1$ a highest weight vector with respect to the Borel subgroup of upper triangular matrices in $\SL(2,\CC)$.\\

The projection from the tensor product $\Gamma_m\otimes\Gamma_n$ onto one of the irreducible subrepresentations $\Gamma_{m+n-2k}$, $0\leq k\leq\min(m,n)$, is given by the Rankin--Cohen bracket
\begin{multline*}
    \RC_{m, n; k}: \Gamma_m\otimes \Gamma_n \to \Gamma_{m+n-2k},\\
    \RC_k: f(z)\otimes g(w)\mapsto\sum_{j=0}^k (-1)^j \binom{k}{j}\frac{1}{(-m)_j  (-n)_{k-j}}\partial_z^j f \partial_w^{k-j}|_{z=w}.
\end{multline*}
Its dual operator $\RC_k^\ast$ with respect to the invariant inner products mentioned above has the following expression:
\begin{multline*}
    \RC_k^\ast: \Gamma_{m+n-2k} \to \Gamma_m\otimes\Gamma_n, \quad \RC_k^\ast f(z, w) =C_{m +n-2k}(z-w)^k\int_{\mathbb C}f(x)  (1+z\bar x)^{m -k}(1+w\bar x)^{n -k}\\
    \times(1+|x|^2)^{-(m+n-2k)-2}dm(x).
\end{multline*}
In fact, it is easy to prove that the integral operator $\RC_k^\ast$ is intertwining, and then the formula for $\RC_k=(\RC_k^\ast)^\ast$ is obtained by differentiating the reproducing formula for $\Gamma_m\otimes\Gamma_n$ and evaluating at the diagonal (see e.g. \cite{Peng-Zhang-jfa} for the non-compact case and for general Hermitian symmetric spaces).

\begin{lemm+}
    For every $0\leq k\leq\min(m,n)$ we have
    $$ \RC_k^\ast 1 = (z-w)^k $$
    and the square norm of $(z-w)^k$
    is
    $$ \frac{k! (m+n-2k+2)^-_{k}}{(m)^-_k (n)^-_{k}}, $$
    where $(a)^-_k=a(a-1)\cdots (a-k+1)$ is the product of the $k$ descending factor starting with $a$. In particular $(\frac{(m)_k^-(n)_k^-}{k!(m+n-k+1)_k^-})^{\frac 12} \RC^\ast_k$ is an isometry and $(\frac{(m)_k^-(n)_k^-}{k!(m+n-k+1)_k^-})^{\frac 12} \RC_k$ is a partial isometry.
\end{lemm+}

\begin{proof}
    Since $\RC_k^\ast$  preserves the $U(1)$-weight, $\RC_k^\ast1$ is a sum
    $$ \RC_k^\ast1(z,w)=\sum_{j=0}^k c_j z^jw^{k-j}. $$
    We find $c_j$ by taking the inner product with $z^jw^{k-j}$,
    $$ c_j\Vert z^j\Vert_m^2\Vert w^{k-j}\Vert_n^2 = \langle\RC_k^*1,z^jw^{k-j}\rangle = \langle 1,\RC_k(z^jw^{k-j})\rangle = (-1)^j\frac{k!}{(-m)_j(-n)_{k-j}}, $$
    and using \eqref{eq:NormMonomial}, so constant $c_j =(-1)^{k-j}\binom kj$ and the claimed formula holds.
    Now we compute $\RC_k\RC_k^\ast 1$ using the definition of $\RC_k$:
    $$ \RC_k\RC_k^\ast 1 = \RC_k[(z-w)^k] = \sum_{j=0}^k(-1)^j{k\choose j}\frac{1}{(-m)_j(-n)_{k-j}}\cdot(-1)^{k-j}k!. $$
    The latter sum can be written as hypergeometric series
and can further be 
evaluated by Gauss' summation formula,
    $$ = {n\choose k}^{-1}\sum_{j=0}^k\frac{(-k)_j(n-k+1)_j}{(-m)_jj!} =\frac{k!(m+n-k+1)_k^-}{(m)_k^-(n)_k^-}. $$
    The rest follows since $\RC_k$ is intertwining.
\end{proof}  

\bibliographystyle{amsplain}
\bibliography{bibdb}

\end{document}